\newcommand{\pdf}[2]{\frac{\partial #1}{\partial #2}}
\newcommand{\Ricci}{\operatorname{Ric}}   
\newcommand{\Ff}{\mathcal F}
 \newcommand{\Cc}{\mathcal C}
 \newcommand{\RR}{\mathbf{R}}  
 \newcommand{\BB}{\mathbf{B}}  
  \newcommand{\HH}{\textnormal{H}}  
  \newcommand{\Div}{\operatorname{Div}}
    \newcommand{\dist}{\operatorname{dist}}
 \newcommand{\eps}{\epsilon}
 \newcommand{\tM}{\widetilde{M}}
\newcommand{\vv}{\mathbf v}
\newcommand{\nn}{\mathbf{n}}
\newcommand{\spt}{\operatorname{spt}}
\newcommand{\interior}{\operatorname{interior}}
\newcommand{\fup}{\widehat{f}}
\newcommand{\Kup}{\widehat{K}}
\newcommand{\tf}{\widetilde{f}}
\newcommand{\tK}{\widetilde{K}}
\def\begfig {
\begin{figure}
\small }
\def\endfig {
\normalsize
\end{figure}
}
    \newtheorem{theorem}    {Theorem}   
    \newtheorem{lemma}      [theorem]       {Lemma}
    \newtheorem{corollary}  [theorem]     {Corollary}
    \newtheorem{proposition}       [theorem]       {Proposition}
    \newtheorem*{theorem*}{Theorem}
    \theoremstyle{definition}
    \newtheorem{definition}  [theorem] {Definition}
    \theoremstyle{definition}
    \newtheorem{remark}   [theorem]       {Remark}
    \newtheorem*{remark*}{Remark}
\title[Avoidance]{Avoidance for Set-Theoretic Solutions 
 of Mean-Curvature-Type Flows}
\author{Or Hershkovits}
\thanks{The first author was partially supported by an AMS-Simons Travel Grant}
\address{Institute of Mathematics\\ Hebrew University \\ Givat Ram \\ Jerusalem, 91904, Israel}
\email{or.Hershkovits@mail.huji.ac.il}
\author{Brian White}
\thanks{The second author was partially supported by NSF grants~DMS-1404282 and~DMS-1711293}
\address{Department of Mathematics\\ Stanford University\\ Stanford, CA 94305}
\email{bcwhite@stanford.edu}
\subjclass[2010]{Primary 53C44; Secondary 49Q20.}
\date{September 13, 2018.  Revised March 23, 2020.}
\begin{document}
\maketitle
\begin{abstract}
 We give a self-contained treatment of set-theoretic subsolutions to flow by mean curvature, or, more generally, to flow by mean curvature plus an ambient vector field.  The ambient space can be any smooth Riemannian manifold.
 Most importantly, we show that if
 two such set-theoretic subsolutions are initially disjoint, then they remain disjoint
 provided one of the subsolutions is compact; previously, this was only known for Euclidean space
 (with no ambient vectorfield). 
We also give a simple proof of a version of Ilmanen's interpolation theorem.
\end{abstract}

\section{introduction}
Under mean curvature flow, an initially smooth compact hypersurface in $\RR^{n+1}$ 
must become singular in finite time.   
Singularities typically occur before the surface disappears, that is, before its area tends to zero.
Thus it is desirable to have weak notions of mean curvature flow that allow
the flow to extend past singularities.

{\bf Level set flow}, introduced simultaneously in~\cite{CGG} and~\cite{ES}, is one such notion.
It is very natural and has proved to be very useful.  Under mild hypotheses on the ambient space,
there is a unique level set flow starting with {\bf any} compact initial set; for a smoothly embedded
initial surface, it agrees with the classical solution as long as the classical solution exists (i.e., up
until the first singular time).
However, the definition has the unfortunate feature that a limit of level set flows need not be a level set flow.

Partly to get around that feature, Ilmanen~\cite{Ilm_mani_surv, Ilm_elip}
 introduced a weaker notion, that of a ``{set-theoretic subsolution}
to mean curvature flow'' or (in the terminology of~\cite{White_top_weak}) a ``weak set flow".
Roughly speaking, a one-parameter family of closed subsets of a Riemannian manifold is a weak set flow
provided it does not bump into any smoothly embedded, closed hypersurface moving by mean curvature flow.

A key feature of weak set flows is that not only do they not bump into smooth mean curvature flows,
they also cannot bump into other weak set flows. More precisely, they satisfy the following avoidance
principle: two initially disjoint weak set flows remain disjoint as long as at least one of them remains compact.
(Under the mild hypothesis that the ambient space is complete with Ricci curvature bounded below,
any initially compact weak set flow remains compact.)   Ilmanen gave a very elegant proof of the avoidance
principle in Euclidean space, but it strongly relied on invariance of mean curvature flow under spatial translations,
and thus it did not seem to extend to other Riemannian manifolds.
One of the main contributions of this paper is modifying Ilmanen's proof so that it works
in arbitrary Riemannian manifolds, and, more generally, for closed sets (in a Riemannian manifold)
 moving by mean curvature plus
an ambient vectorfield.

Weak set flows and level set flows are related by a containment theorem (Theorem~\ref{containment-theorem}):
the level set flow starting from a given set is a weak set flow, and it contains every other weak set flow
starting from that set.   
Ilmanen~\cite{Ilm_mani_surv}*{4H} proved that the containment theorem follows from the avoidance principle.
But since the avoidance principle was only known in Euclidean space, likewise the
containment theorem was only known in that case.

The organization of this paper as follows.
Section~\ref{definitions-section} gives the basic definitions.  
We have found it convenient to use a definition of weak set flow that differs from,
but is equivalent to, Ilmanen's original definition.
 In Section \ref{elementary-section}, we derive some elementary properties of weak set flows. 
 In Section \ref{barrier-modification-section}, 
 we prove some technical results about modifying barriers to get barriers with additional
 desirable properties.
In Sections~\ref{distance-function-section} and~\ref{avoidance-section}, 
the barrier modification theorems are used to prove the avoidance principle.
In Section \ref{equivalent-definitions-section}, 
we show that our definition of weak set flow (Definition~\ref{weak-set-flow-definition}) agrees
with Ilmanen's original definition. 
 In Section \ref{biggest-flow-section}, we show that there is a biggest weak set flow with
 any given initial set, and we prove (under mild hypotheses) that this biggest flow
coincides with the level set flow.
In Sections~\ref{limits-section} and~\ref{boundary-section}, we show that limits of weak set flows and boundaries of level set flows are weak set flows. 
In Section \ref{X-flow-section}, we explain how the discussion in this paper extends to motion by mean curvature plus an ambient vectorfield. 
In Section~\ref{X-mean-convex-section}, we present the basic facts about surfaces that move in one direction under the flow.
 In Section~\ref{Brakke_sec}, we consider varifolds flowing by mean curvature plus an ambient vectorfield, 
and we show that the support of such a varifold flow is a weak set flow. 
In the appendix, we give a simple proof of a version of Ilmanen's interpolation theorem, a key tool in 
the proof of the avoidance theorem.

\section{Basic Definitions}\label{definitions-section}

\begin{definition}\label{barrier-definition}
Let $N$ be a smooth Riemannian manifold. 
A family $t\in [a,b]\mapsto K(t)$ of closed subsets of $N$
is called a {\bf smooth barrier} in $N$ provided it is a 
 smooth, one-parameter family of closed regions
with smooth boundary.   
Equivalently, it is a smooth barrier provided there 
exists a smooth function $f:N\times[a,b]\to\RR$ such that $K(t)=\{x: f(x,t)\le 0\}$
and such that $\nabla f(x,t)$ is nonzero at all points of $\partial K(t)$.
We say that the barrier is {\bf compact} if $\cup_{t\in [a,b]}K(t)$ is a compact
subset of $N$, or, equivalently, if 
\[
  K:= \{(p,t): t\in [a,b],\, p\in K(t)\}
\]
is a compact subset of $N\times \RR$.
\end{definition}

If $t\in[a,b]\mapsto K(t)$ is a smooth barrier
and if $x\in \partial K(t)$, we let $\nu_K(x,t)$ be the unit normal to $\partial K(t)$
that points out from $K(t)$, we let $\HH_K(x,t)$ denote the dot product of $\nu_K(x,t)$ 
and the mean curvature vector of $\partial K(t)$ at $x$,
 and we let $\vv_K(x,t)$ denote the normal velocity of $\tau\mapsto \partial K(\tau)$ at $(x,t)$ in the direction of 
$\nu_K$.
In terms of a function $f$ as in Definition~\ref{barrier-definition}, 
\begin{equation}\label{f-expressions}
\begin{aligned}
\nu_K &= \frac{\nabla f}{|\nabla f|}, \\
\HH_K &=  -\Div\left( \frac{\nabla f}{|\nabla f|} \right),  \\
\vv_K &=  -\frac{1}{|\nabla f|} \pdf{f}t.
\end{aligned}
\end{equation}
Alternatively, we can describe $\vv_K$ as follows.  Let $I\subset \RR$ be an interval containing $t$
and $\gamma: I\to N$ be a smooth map such that $\gamma(t)=x$ and such that $\gamma(\tau)\in \partial K(\tau)$
for all $\tau\in I$.  Then 
\[
  \vv_K(x,t) = \gamma'(t)\cdot \nu_K(x,t).
\]
For $x\in \partial K(t)$, we define $\Phi_K=\Phi_K(x,t)$ by
\[
   \Phi_K = \vv_K - \HH_K.
\]
Thus $\Phi_K\le 0$ everywhere if and only if $t\mapsto \partial K(t)$ is a subsolution of mean
curvature flow,
and $\Phi_K\ge 0$ if and only it is a supersolution.

For example, let $\lambda>0$, and for $t<0$, let
\[
   K(t)= \{x\in \RR^{m+1}: |x| \ge (-\lambda t)^{1/2} \}.
\]
Thus $\partial K(t)$ is the sphere of radius $(-\lambda t)^{1/2}$ centered at the origin.
At a point $x\in \partial K(t)$, 
$\vv_K(x,t) = \frac12 \lambda^{1/2} |t|^{-1/2}$  and $\HH_K(x,t) = m(\lambda |t|)^{-1/2}$.
Consequently, $\Phi_K$ is positive, zero, or negative according to whether $\lambda$
is greater than, less than, or equal to $(2m)^{1/2}$.

\begin{definition}\label{weak-set-flow-definition}
Let $Z$ be a closed subset of $N\times [T_0,\infty)$,  and for each $t\in [T_0,\infty)$, let 
\[
Z(t):=\{x\in N : (x,t)\in Z\}.
\]
We say that $Z$ is a {\bf weak set flow} (for mean curvature flow)
 with {\bf starting time $T_0$} 
provided the following holds:
if 
\[
  t\in [a,b]\mapsto K(t)
\]
is a smooth compact barrier with $a\ge T_0$, if $K(t)$ is disjoint from $Z(t)$ for all $t\in [a,b)$, and if
$p$ is in the intersection of $K(b)$ and $Z(b)$, then $p\in \partial K(b)$ and 
\[
  \Phi_K(p,b) \ge 0.
\]
\end{definition}
If the starting time is not specified, we take it to be $0$.

For example, consider a smooth barrier $t\in [0,T]\mapsto K(t)$.  
Then $K$ is a weak set flow if and only if $\Phi_K\le 0$ at every $(p,t)$ with $p\in \partial K(t)$.
Similarly, consider a smooth one-parameter family $t\in [0,T]\mapsto M(t)$
of smooth, properly embedded hypersurfaces in $N$.   Then $t\mapsto M(t)$
is a weak set flow if and only it is a classical mean curvature flow.
(These facts follow easily from the definition of weak set flow.)

Note that if $Z$ is a weak set flow and $a\in \RR$, then
\[
   \widetilde{Z}(t) :=  
   \begin{cases}
   Z(t) &\text{if $t\le a$}, \\
   \emptyset &\text{if $t>a$}
   \end{cases}
\]
is also a weak set flow.  Thus weak set flows are allowed to suddenly vanish at any time.

Definition~\ref{weak-set-flow-definition} differs from Ilmanen's original definition, 
but we will show that the two definitions are equivalent
in Section~\ref{equivalent-definitions-section}.

\section{Elementary properties of weak set flows}\label{elementary-section}

\begin{theorem}\label{shrinking-ball-theorem}
Let $m=\dim N -1$ and $c>2m$.  Given $p\in N$, there exists an $\eps>0$
with the following property. 
\begin{enumerate}[\upshape (i)]
\item\label{ball-one} If $0<\delta\le \eps$ and if $0<\tau< \delta^2/c$, then
\begin{equation*}
  t\in [0,\tau] \mapsto K(t):=\{x: \dist(x,p)\le (\delta^2-ct)^{1/2} \}
\end{equation*}
is a smooth compact barrier, and $\Phi_K(x,t)<0$ for all $t\in [0,\tau]$ and $x\in \partial K(t)$.
\item\label{ball-two} If $Z:[T_0,\infty)\mapsto Z(t)$ is a weak set
flow in $N$, then
\[
    f(t)^2 + ct
\]
is a non-decreasing function of $t\in [T_0,\infty)$, where
\[
f(t) = \min\{\eps, \dist(Z(t),p)\}.
\]
\end{enumerate}
\end{theorem}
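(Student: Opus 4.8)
For part~(i), the plan is to compare $\partial K(t)$ with round geodesic spheres about $p$, using that such spheres have almost-Euclidean mean curvature. First I would fix $\eps_1>0$ so small that $\overline{B(p,\eps_1)}$ is compact and $r(\cdot):=\dist(\cdot,p)$ is smooth on $B(p,\eps_1)\setminus\{p\}$; then each sphere $\{r=\rho\}$ with $0<\rho\le\eps_1$ is a smooth hypersurface, and the standard fact that a small geodesic sphere $\{r=\rho\}$ has mean curvature $h(\rho,\cdot)=m/\rho+O(\rho)$ with respect to its outward normal $\partial_r$, with mean-curvature vector pointing toward $p$, provides a constant $C=C(p,N)>0$ with $|h(\rho,\cdot)-m/\rho|\le C\rho$ for $\rho\le\eps_1$. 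For the family $K(t)=\{r\le\rho(t)\}$ with $\rho(t)=(\delta^2-ct)^{1/2}$ one then has $\nu_K=\partial_r$, $\HH_K=-h(\rho(t),\cdot)$, $\vv_K=\rho'(t)=-c/(2\rho(t))$, and hence
\[
   \Phi_K=\vv_K-\HH_K\le\frac{2m-c}{2\rho(t)}+C\rho(t),
\]
which is negative as soon as $\rho(t)^2<(c-2m)/(2C)$, using $c>2m$. So I would set $\eps:=\tfrac12\min\{\eps_1,((c-2m)/(2C))^{1/2}\}$: for $0<\delta\le\eps$ and $0<\tau<\delta^2/c$ we have $\rho(t)\in(0,\eps]$ for all $t\in[0,\tau]$, so $K$ is a smoothly varying family of compact regions with smooth boundary inside the normal ball about $p$ --- a smooth compact barrier --- with $\Phi_K<0$ on $\partial K(t)$, as required.

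For part~(ii), write $g(t):=f(t)^2+ct$. The engine is an avoidance lemma obtained by running the barriers of part~(i): if $t_1\ge T_0$ and $0<\delta'<f(t_1)$, then, translating that barrier to start at time $t_1$, the region $K(t)=\{\dist(\cdot,p)\le((\delta')^2-c(t-t_1))^{1/2}\}$ stays disjoint from $Z(t)$ for every $t\in[t_1,t_1+(\delta')^2/c)$. To prove this I would note $K(t_1)\cap Z(t_1)=\emptyset$ (since $\dist(Z(t_1),p)\ge f(t_1)>\delta'$), let $b$ be the infimum of those $t\in[t_1,t_1+(\delta')^2/c)$ with $K(t)\cap Z(t)\ne\emptyset$, and suppose $b<t_1+(\delta')^2/c$. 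A sequence of contact points at times $t_n\downarrow b$ lies in the compact set $\overline{B(p,\delta')}$, and a subsequential limit is a contact point at time $b$ because $Z$ is closed; so $K(b)\cap Z(b)\ne\emptyset$, which together with $K(t_1)\cap Z(t_1)=\emptyset$ forces $b>t_1$, while $K(t)\cap Z(t)=\emptyset$ for $t\in[t_1,b)$. Feeding the smooth compact barrier $t\in[t_1,b]\mapsto K(t)$ into Definition~\ref{weak-set-flow-definition} (legitimate since $t_1\ge T_0$ and $0<b-t_1<(\delta')^2/c$) would force $\Phi_K\ge0$ at some point of $\partial K(b)$, contradicting part~(i). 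Hence $b=t_1+(\delta')^2/c$, so for $t\in[t_1,t_1+(\delta')^2/c)$ we get $\dist(Z(t),p)^2>(\delta')^2-c(t-t_1)$, and, treating separately the cases $\dist(Z(t),p)\le\eps$ and $\dist(Z(t),p)>\eps$, also
\[
   g(t)=f(t)^2+ct>(\delta')^2+ct_1\qquad\text{for }t_1\le t<t_1+(\delta')^2/c.
\]

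Finally I would deduce that $g$ is non-decreasing by contradiction. Since $Z$ is closed and points within $\eps$ of $p$ lie in the fixed compact set $\overline{B(p,\eps)}$, the function $t\mapsto f(t)=\min\{\eps,\dist(Z(t),p)\}$ is lower semicontinuous, hence so is $g$, and every sublevel set $\{g\le v\}$ is closed. If $g(t_1)>g(t_2)$ for some $T_0\le t_1<t_2$, pick $v\in(g(t_2),g(t_1))$ and set $t^*:=\min\{t\in[t_1,t_2]:g(t)\le v\}$; then $t^*>t_1$, $g(t^*)\le v$, and $g>v$ on $[t_1,t^*)$. From $g(t_1)>v\ge g(t^*)\ge ct^*$ one gets $f(t_1)^2=g(t_1)-ct_1>v-ct_1\ge c(t^*-t_1)$, so there is a $\delta'$ with $\sqrt{c(t^*-t_1)}<\delta'<f(t_1)$; then $t^*<t_1+(\delta')^2/c$, so the displayed inequality at $t=t^*$ yields $g(t^*)>(\delta')^2+ct_1$, and letting $\delta'\uparrow f(t_1)$ gives $g(t^*)\ge f(t_1)^2+ct_1=g(t_1)>v$, contradicting $g(t^*)\le v$. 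The step I expect to be most delicate is not any single estimate but the bookkeeping around the barrier interval: matching the strict disjointness available at the initial time (only from the \emph{open} ball) with Definition~\ref{weak-set-flow-definition}'s requirement of disjointness on $[a,b)$, producing a genuine contact point at the terminal time via compactness, and coping with the mere lower semicontinuity of $g$ in the last step.
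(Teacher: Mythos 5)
Your proposal is correct and uses essentially the same approach as the paper: part (i) from the expansion $\HH(\partial B(p,\rho))=-m/\rho+o(\rho)$ for small geodesic spheres, and part (ii) by running the shrinking-ball barriers of part (i) against $Z$ and invoking Definition~\ref{weak-set-flow-definition} at a first contact time. The only organizational difference is in (ii): the paper negates monotonicity directly and picks $\delta$ strictly between $f(T)$ and $\bigl(f(T+\tau)^2+c\tau\bigr)^{1/2}$, so a single barrier that is disjoint at time $T$ but meets $Z$ at time $T+\tau$ already yields the contradiction (with the same implicit first-contact step you spell out), which makes your intermediate avoidance lemma, the lower-semicontinuity of $g$, and the first-crossing time $t^*$ unnecessary, though all of those steps are valid as you argue them.
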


\begin{proof}
For $r>0$, 
let $B(r)=\{x: \dist(x,p)\le r\}$.
Choose $\eps>0$ so that for $r\in (0,\eps]$, the geodesic sphere $\partial B(r)$
is smooth and compact, and
\begin{equation}\label{small-ball-bound}
   \HH(B(r)) > -\frac{c}{2r}.
\end{equation}
(This is possible since $\HH(B(r)) = -(m/r)+o(r)$.)
Assertion~\eqref{ball-one} follows immediately.

Suppose Assertion~\eqref{ball-two} is false.  Then there exist $T< T+\tau$ such that 
$f(T)^2+cT$ is greater than $f(T+\tau)^2+c(T+\tau)$. That is,
\[
   f(T)^2   > f(T+\tau)^2 + c\tau.
\]
By relabeling, it suffices to consider the case $T=0$: 
\[
   f(0)^2 > f(\tau)^2 + c\tau.
\]
Thus $f(\tau)<f(0)\le \eps$, so $f(\tau)=\dist(Z(t),p)$.  
Choose $\delta$ with 
with $f(0)> \delta > (f(\tau)^2+c\tau)^{1/2}$.  Thus
\begin{equation}\label{delta-squeeze}
   \min\{\eps, \dist(Z(0),p)\} > \delta > \left(\dist(Z(\tau),p)^2+c\tau\right)^{1/2}.
\end{equation}
Define $K(\cdot)$ by
\[
   t \in [0, \tau] \mapsto K(t):= B((\delta^2-ct)^{1/2}).
\]
Note by~\eqref{delta-squeeze} that the radius of the ball $K(t)$ is strictly between $0$ and $\delta<\eps$
for all $t\in [0,\tau]$.  Thus $K$ is a smooth compact barrier.
By Assertion~\eqref{ball-one}, $\Phi_K<0$ at all points of $\partial K(\cdot)$.
On the other hand, from~\eqref{delta-squeeze} we see that $K(t)$ and $Z(t)$ are disjoint
at time $0$ but not at time $\tau$, a contradiction.
\end{proof}

\begin{corollary}\label{shrinking-ball-corollary}
Suppose $T>T_0$.
\begin{enumerate}[\upshape (i)]
\item\label{ball-corollary-one} If $p\in Z(T)$, then $\dist(Z(t),p)^2 \le c(T-t)$ for $t<T$ close to $T$.
\item\label{ball-corollary-two} If $u:N\times \RR\to\RR$ is continuous, then
\[
     \tilde u(T)\ge \limsup_{t\uparrow T}\tilde u(t),
\]
where $\tilde u(t):=\inf_{x\in Z(t)} u(x,t)$.
\end{enumerate}
\end{corollary}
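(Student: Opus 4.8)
The plan is to derive both parts directly from the monotonicity statement in Theorem~\ref{shrinking-ball-theorem}\eqref{ball-two}. Throughout, $c>2m$ is the fixed constant and, for a given point $p$, I write $\eps=\eps(p)$ for the number furnished by that theorem and set $f(t)=\min\{\eps,\dist(Z(t),p)\}$.

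\emph{For \eqref{ball-corollary-one}:} I would fix $p\in Z(T)$, so that $\dist(Z(T),p)=0$ and hence $f(T)=0$. Since $t\mapsto f(t)^2+ct$ is non-decreasing on $[T_0,\infty)$, for every $t\in[T_0,T]$ one gets $f(t)^2+ct\le f(T)^2+cT=cT$, i.e.\ $f(t)^2\le c(T-t)$. Once $t<T$ is close enough to $T$ that $c(T-t)<\eps^2$, this forces $f(t)<\eps$, hence $\dist(Z(t),p)=f(t)<\eps$; in particular $Z(t)\ne\emptyset$ and $\dist(Z(t),p)^2=f(t)^2\le c(T-t)$, which is exactly \eqref{ball-corollary-one}. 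This argument also records, for use in the next part, that $Z(t)\ne\emptyset$ for all $t<T$ sufficiently close to $T$.

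\emph{For \eqref{ball-corollary-two}:} The idea is to transport a nearly optimal point of $Z(T)$ backwards in time using \eqref{ball-corollary-one} and then appeal to continuity of $u$. If $\tilde u(T)=+\infty$ there is nothing to prove (then $Z(T)=\emptyset$), so assume $\tilde u(T)<+\infty$, hence $Z(T)\ne\emptyset$. Given $\eta>0$, choose $p\in Z(T)$ with $u(p,T)<\tilde u(T)+\eta$. By \eqref{ball-corollary-one}, $\dist(Z(t),p)\to0$ as $t\uparrow T$, so for $t<T$ near $T$ I may select $x_t\in Z(t)$ with $x_t\to p$. Then $\tilde u(t)\le u(x_t,t)$ for such $t$, and $u(x_t,t)\to u(p,T)$ by continuity of $u$, whence $\limsup_{t\uparrow T}\tilde u(t)\le u(p,T)<\tilde u(T)+\eta$. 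Letting $\eta\downarrow0$ gives $\tilde u(T)\ge\limsup_{t\uparrow T}\tilde u(t)$.

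The steps are short and I do not anticipate a genuine obstacle; the only points demanding a little care are that (a) the infima defining $\dist(Z(t),p)$ and $\tilde u(t)$ need not be attained when $Z(t)$ is non-compact, which is why the argument uses near-optimizers together with the $\min\{\eps,\cdot\}$ truncation, and (b) $Z(t)$ could a priori be empty, which is ruled out near $T$ by the monotonicity inside \eqref{ball-corollary-one} (if $Z(t)=\emptyset$ then $f(t)=\eps$, contradicting $f(t)^2\le c(T-t)$ for $t$ close to $T$).
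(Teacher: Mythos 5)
Your proposal is correct and takes essentially the same route as the paper: part \eqref{ball-corollary-one} follows from the monotonicity of $f(t)^2+ct$ in Theorem~\ref{shrinking-ball-theorem}\eqref{ball-two} together with $f(T)=0$, and part \eqref{ball-corollary-two} follows from \eqref{ball-corollary-one} by choosing points of $Z(t)$ converging to a (near-)optimal $p\in Z(T)$ and invoking continuity of $u$. Your use of near-minimizers in place of the paper's closest points $p(t)$, and your explicit handling of the truncation and the empty-set case, are only minor technical refinements of the same argument.
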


\begin{proof}
In the notation of Theorem~\ref{shrinking-ball-theorem}, $f(T)=0$,
so $f(t)^2+ct \le CT$ for $t\le T$, and therefore
\[
   \min\{\eps, \dist(Z(t),p)\}^2= f(t)^2 \le c(T-t),
\]
which proves Assertion~\eqref{ball-corollary-one}.

To prove Assertion~\eqref{ball-corollary-two}, let $p\in Z(T)$.
By Assertion~\eqref{ball-corollary-one}, if $t<T$ is sufficiently close to $T$, 
then there exists a point $p(t)\in Z(t)$
closest to $p$, and $p(t)\to p$ as $t\to T$. Now $\tilde u(t)\le u(p(t),t)$, so
\[
\limsup_{t\uparrow T} \tilde u(t)
\le
\limsup_{t\uparrow T}u(p(t),t) 
=
u(p,T).
\]
Assertion~\eqref{ball-corollary-two} follows by taking the infimum over all $p\in Z(T)$.
\end{proof}

\begin{theorem}\label{finite-speed-theorem}
For every $r>0$, $\lambda\in \RR$, and positive integer $n$, 
there is a constant $h=h(r,\lambda,n)>0$  with the following 
property.   Suppose that $N$ is a smooth Riemannian $n$-manifold,
that $R>r$, that the geodesic ball $\overline{B}(p,R)$ in $N$ is compact, 
and that the Ricci curvature of $N$
is $\ge \lambda$ on $\overline{B}(p,R)$.
If $t\in [0,T]\mapsto Z(t)$ is a weak set flow in $N$ and if
   $\dist(Z(0),p) > R$, then 
\begin{equation}\label{swango}
   \dist(Z(t),p) > R - ht  \tag{*}
\end{equation}
for all $t\in [0,T]$ with $t\le  (R-r)/h$.
\end{theorem}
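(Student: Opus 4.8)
The plan is to argue by contradiction: at the first time at which the asserted inequality fails I will produce a smooth compact barrier that violates Definition~\ref{weak-set-flow-definition}. The barrier will be (a smoothed-off piece of) a geodesic sphere shrinking at speed $h$, and the Ricci lower bound will enter only through the Laplacian comparison theorem.

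First I fix the constant. Writing $m=n-1$, let $\phi_\lambda(s)$ be the comparison function of the Laplacian comparison theorem, i.e.\ the mean curvature of the geodesic sphere of radius $s$ in the space form of constant Ricci curvature $\lambda$; thus $\phi_\lambda(s)=m/s$ when $\lambda=0$, and $\phi_\lambda$ is built from $\coth$, resp.\ $\cot$, when $\lambda<0$, resp.\ $\lambda>0$. Since $\phi_\lambda$ is non-increasing, I take $h=h(r,\lambda,n)$ to be any number exceeding $\phi_\lambda(s)$ for all $s\ge r/2$ in the domain, say $h:=|\phi_\lambda(r/2)|+1$. Now suppose the theorem is false, so the set of $t\in[0,\min\{T,(R-r)/h\}]$ with $\dist(Z(t),p)\le R-ht$ is nonempty; let $t_1$ be its infimum. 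Using that $\overline B(p,R)$ is compact and $Z$ is closed, together with Corollary~\ref{shrinking-ball-corollary} (which bounds $\dist(Z(\cdot),p)$ from the left), one checks that $t_1>0$, that $\dist(Z(t_1),p)=R-ht_1=:\ell\in[r,R)$, that $\dist(Z(\tau),p)>R-h\tau$ for all $\tau\in[0,t_1)$, and that $\ell=\dist(q_1,p)$ for some $q_1\in Z(t_1)$.

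The heart of the proof is the barrier, and this is the one genuinely delicate point: the naive choice, the shrinking geodesic ball $\overline B(p,R-h\tau)$, need not have smooth boundary, because of the cut locus of $p$. I would get around this with Calabi's trick. Let $\gamma\colon[0,\ell]\to N$ be a unit-speed minimizing geodesic from $p$ to $q_1$, which exists since $\overline B(p,\ell)$ is compact, and fix $\sigma\in(0,r/2)$. Because $\gamma|_{[0,\ell]}$ is minimizing and $0<\sigma<\ell$, the contact point $q_1=\gamma(\ell)$ does not lie in the cut locus of $\gamma(\sigma)$; hence $\rho_\sigma(x):=\sigma+\dist(x,\gamma(\sigma))$ is smooth near $q_1$, satisfies $\rho_\sigma\ge\dist(\cdot,p)$ by the triangle inequality with equality at $q_1$, and has $|\nabla\rho_\sigma|\equiv1$. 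A routine gluing then produces a smooth function $\psi$ on $N$ that equals $\rho_\sigma$ in a neighborhood $V$ of $q_1$, is $\ge\dist(\cdot,p)$ on $V$, exceeds $R$ off $V$, and has no critical values near $\ell$; for small $\eta\in(0,t_1)$ the family
\[
  K(\tau):=\{x\in N:\psi(x)\le R-h\tau\},\qquad \tau\in[t_1-\eta,t_1],
\]
is a smooth compact barrier with $K(\tau)\subseteq\overline B(p,R-h\tau)$, hence disjoint from $Z(\tau)$ for $\tau<t_1$, while $\psi(q_1)=\ell=R-ht_1$ puts $q_1$ in $\partial K(t_1)\cap Z(t_1)$.

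Finally I would compute $\Phi_K$ at $q_1$. Near $q_1$ one has $\psi=\rho_\sigma$, so with $f(x,\tau)=\psi(x)-(R-h\tau)$ the formulas~\eqref{f-expressions} give $\vv_K(q_1,t_1)=-h$ and $\HH_K(q_1,t_1)=-\Div(\nabla\rho_\sigma)(q_1)=-\Delta\big(\dist(\cdot,\gamma(\sigma))\big)(q_1)$. Since $\Ricci\ge\lambda$ on $\overline B(p,R)$, which contains the ball $\overline B(\gamma(\sigma),\ell-\sigma)$ together with the minimizing geodesic $\gamma|_{[\sigma,\ell]}$, the Laplacian comparison theorem gives $\Delta\big(\dist(\cdot,\gamma(\sigma))\big)(q_1)\le\phi_\lambda(\ell-\sigma)\le\phi_\lambda(r/2)$, where the last step uses $\ell-\sigma>r/2$ and the monotonicity of $\phi_\lambda$. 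Therefore
\[
  \Phi_K(q_1,t_1)=\vv_K(q_1,t_1)-\HH_K(q_1,t_1)=-h+\Delta\big(\dist(\cdot,\gamma(\sigma))\big)(q_1)\le -h+\phi_\lambda(r/2)<0,
\]
whereas Definition~\ref{weak-set-flow-definition}, applied to the barrier $K$ on $[t_1-\eta,t_1]$, forces $\Phi_K(q_1,t_1)\ge0$ — a contradiction, which establishes~\eqref{swango}. I expect the only real obstacle to be the replacement of the (possibly non-smooth) shrinking ball of $p$ by a genuine smooth compact barrier: moving the center a little way along $\gamma$ makes the distance function smooth near the contact point $q_1$, and capping off yields the barrier, while the rest is the comparison estimate $\HH\ge-\phi_\lambda$ for geodesic spheres of radius $\ge r/2$ and the bookkeeping already encapsulated in Corollary~\ref{shrinking-ball-corollary}.
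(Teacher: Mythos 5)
Your proposal follows essentially the same route as the paper: locate the first failure time using Corollary~\ref{shrinking-ball-corollary} together with closedness of $Z$ and compactness of $\overline B(p,R)$, build a barrier that is smooth near the contact point by shifting the reference point into the interior of the minimizing geodesic (Calabi's trick), let it recede at speed $h$, and contradict Definition~\ref{weak-set-flow-definition} via the mean curvature (Laplacian) comparison. The only real difference is cosmetic: the paper's barrier is a ball of fixed radius $r/2$ whose center slides along $\gamma$ at speed $h$ (so $h$ is the model mean curvature at radius $r/2$), whereas yours is the shrinking sublevel set of $\sigma+\dist(\cdot,\gamma(\sigma))$; the comparison input and the velocity computation are the same.

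The one step that does not work as written is the ``routine gluing.'' Interpolating between $\rho_\sigma$ on $V$ and a constant exceeding $R$ off $V$ can create critical points of $\psi$ in the transition annulus at values just below $\ell$: on that annulus $\rho_\sigma$ dips to roughly $\ell-\operatorname{diam}(V)$ while the interpolated function sweeps up to $R+1$, so ``no critical values near $\ell$'' is not automatic, and Sard's theorem only supplies a full-measure set of regular values, not the whole interval of levels $R-h\tau$, $\tau\in[t_1-\eta,t_1]$, that a smooth barrier requires. The clean fix is not to glue at all: take $f(x,\tau)=\rho_\sigma(x)-(R-h\tau)$, which is continuous on $N\times[t_1-\eta,t_1]$, whose sublevel sets lie in $\overline B(p,R-h\tau)$ and hence avoid $Z(\tau)$ for $\tau<t_1$, and which is smooth with nonvanishing gradient near $(q_1,t_1)$; Theorem~\ref{nonsmooth-barriers-theorem} then yields $\Phi_K(q_1,t_1)\ge 0$ directly. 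This is exactly the crutch the paper's own proof uses (see the footnote there). One further small remark: for $\lambda>0$ you should either note that $\ell-\sigma$ stays strictly below the model diameter (it does, by Myers applied along $\gamma|_{[\sigma,\ell]}\subset\overline B(p,R)$ together with minimality of $\gamma|_{[0,\ell]}$, which rules out conjugate endpoints), or simply run the comparison with $\min\{0,\lambda\}$ as the paper does. With these repairs your argument is complete.
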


\begin{proof}
Let $\mathcal{H}$ be a complete $n$-dimensional manifold that has the same dimension as $N$,
that has constant sectional curvature, and that has Ricci curvature equal to the minimum of $0$ and $\lambda$.  
Let $h>0$ be the mean curvature of a sphere of
radius $r/2$ in $\mathcal{H}$.

Suppose, contrary to the theorem, that~\eqref{swango} fails for some time  $t\le (R-r)/h$.
Let $\tau$ be the first such time.  
By Corollary~\ref{shrinking-ball-corollary} (applied to $\tilde u(t):=\dist(p,Z(t))$),
\[
  \dist(Z(\tau),p) = R- h\tau.
\]
Since $0 < \tau\le (R-r)/h$, 
\[
  r\le  \dist(Z(\tau),p)  < R.
\]
Let $q$ be a point in $Z(\tau)$ with $\dist(p,q)=\dist(p,Z(\tau))=R-m\tau$.
Let  $\gamma$ be a unit-speed, shortest geodesic from $p$ to $q$, prolonged to be a geodesic of length $R$:
\begin{align*}
&\gamma:[0, R]\to N, \\
&\gamma(0)=p, \\
&\gamma(R-h\tau)=q.
\end{align*}
Let
\[
 K: t\in [0,\tau] \mapsto K(t):=\overline{B}(\gamma(R-ht-r/2),r/2).
\]
Since $\gamma$ is length minimizing on $[0,R-h\tau]$, it follows that
the function 
\[
  (x,y)\in N\times N\mapsto \dist(x,y)
\]
is smooth in a small neighborhood of $(x,y)$ if $x$ and $y$ are points in $\gamma((0,R-h\tau))$.

Thus $K$ is smooth in a spacetime neighborhood of $(q,\tau)$.

Note that $K(t)$ is disjoint from $Z(t)$ for $t<\tau$ and  that $K(\tau)\cap Z(\tau)=\{q\}$.

Thus
\[
    \Phi_K(q,\tau):= \vv_K(q,\tau) - \HH_K(q,\tau) \ge 0,
\]
so $\vv_K(q,\tau)\ge \HH_K(q,\tau)$.\footnote{To conclude above that $\Phi_K(q,\tau)\ge 0$ using the definition of weak set flow, the barrier $K$
should be smooth everywhere.  However, it suffices for $K$
to be smooth in a spacetime neighborhood of $(q,\tau)$. 
See Theorem~\ref{nonsmooth-barriers-theorem}.}
However, $\vv_q= -h$, and $\HH_K(q,\tau)>-h$ 
by mean curvature comparison (see~\cite[Lemma 7.1.2]{Petersen}
or~\cite{Dai-Wei}*{Theorem~1.2.2}).  Thus $\vv_K(q,\tau) < \HH_K(q,\tau)$, a contradiction.
\end{proof}

\begin{theorem}[Ilmanen~\cite{ilmanen-generalized}*{Theorem~6.4}]\label{compact-theorem}
Suppose that $Z:[0,\infty)\mapsto Z(t)$ is a weak set flow in a complete Riemannian
$n$-manifold with Ricci curvature bounded below by $\lambda$.
Then
\[
    Y(t) \subset  \{x: \dist(x, Y(0)) \le r + ht\}
\]
for all $t>0$.   
Here $r$ can be any positive number, and $h=h(r,\lambda,n)$ is as in Theorem~\ref{finite-speed-theorem}.

In particular, if $Z(0)$ is empty, then $Z(t)$ is empty for all $t$,
and if $Z(0)$ is compact, then $\cup_{t\le T}Z(t)$ is compact for  $T<\infty$.
\end{theorem}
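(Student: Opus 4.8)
The plan is to deduce everything from the finite‑speed‑of‑propagation estimate, Theorem~\ref{finite-speed-theorem}. Fix $r>0$ once and for all and let $h=h(r,\lambda,n)$ be the constant supplied by that theorem. Since $N$ is complete, the Hopf--Rinow theorem guarantees that every closed geodesic ball in $N$ is compact, and $\Ricci\ge\lambda$ holds on all of $N$; hence the geometric hypotheses of Theorem~\ref{finite-speed-theorem} are automatically met for a ball centered at any point of $N$.

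The core step is the following. Fix $T>0$ and a point $p\in N$ with $\dist(p,Z(0))>r+hT$ (with the convention $\dist(\,\cdot\,,\emptyset)=+\infty$, every $p$ qualifies when $Z(0)=\emptyset$). Choose a radius $R$ with $r+hT<R<\dist(p,Z(0))$. Then $R>r$ and $\dist(Z(0),p)>R$, so Theorem~\ref{finite-speed-theorem} applies to the restriction of $Z$ to $[0,T]$ --- again a weak set flow --- invoked with \emph{this} $r$, so that its constant agrees with our $h$. Since $R-r>hT$ we have $T<(R-r)/h$, and therefore the conclusion of Theorem~\ref{finite-speed-theorem} holds at time $t=T$: $\dist(Z(T),p)>R-hT>r>0$. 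In particular $p\notin Z(T)$. As $p$ was an arbitrary point with $\dist(p,Z(0))>r+hT$, this is exactly the inclusion $Z(T)\subseteq\{x:\dist(x,Z(0))\le r+hT\}$, and $T>0$ was arbitrary.

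The two ``in particular'' statements then reduce to bookkeeping. If $Z(0)=\emptyset$, the inclusion forces $Z(t)=\emptyset$ for every $t>0$ (the right‑hand set is empty under the convention above), and $Z(0)=\emptyset$ by hypothesis. If $Z(0)$ is compact, say $Z(0)\subseteq\overline{B}(q,\rho)$, then for every $t\le T$ the inclusion gives $Z(t)\subseteq\{x:\dist(x,Z(0))\le r+hT\}\subseteq\overline{B}(q,\rho+r+hT)$. Hence $Z\cap(N\times[0,T])$ is a closed subset of the compact set $\overline{B}(q,\rho+r+hT)\times[0,T]$, so it is compact, and its image $\cup_{t\le T}Z(t)$ under the projection $N\times[0,T]\to N$ is compact.

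I do not expect a genuine obstacle here: essentially all of the analytic content (the mean curvature comparison argument) is already packaged inside Theorem~\ref{finite-speed-theorem}, and what remains is only the translation from a ``speed'' estimate to a containment statement together with elementary point‑set topology. The one place that requires a little care is the matching of constants --- one must apply Theorem~\ref{finite-speed-theorem} with its radius parameter equal to the given $r$, so that its $h$ is precisely the $h$ appearing in the present statement --- and verifying that the interval restriction $t\le(R-r)/h$ still permits $t=T$, which is exactly why one chooses $R$ strictly larger than $r+hT$.
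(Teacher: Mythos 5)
Your argument is correct and is essentially the paper's own proof: the paper likewise deduces the containment by applying Theorem~\ref{finite-speed-theorem} in contrapositive form (if $\dist(p,Z(0))>r+ht$ then $\dist(p,Z(t))\ge r$, so $p\notin Z(t)$), with the ``in particular'' statements following by the same elementary point-set reasoning you spell out. Your extra care in choosing $R$ strictly between $r+hT$ and $\dist(p,Z(0))$ and in verifying Hopf--Rinow compactness of balls just makes explicit what the paper leaves implicit.
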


\begin{proof} 
If $\dist(p,Y(0)) > r + ht$, then $\dist(p,Y(t)) \ge r$, and so $p\notin Y(t)$.
Thus if $p\in Y(t)$, then $\dist(p,Y(0))\le r+ht$.
\end{proof}

The ``in particular" assertions of Theorem~\ref{compact-theorem} are
 false (in general) without the lower bound on Ricci curvature.
For example, let $\Sigma$ be a compact manifold with Riemannian metric $\sigma$, 
and let $N=\RR\times \Sigma$ with the complete metric $dx^2 + (\exp(-x-x^3/3))^2\sigma$.
Then $t\mapsto M(t):=\{\tan t\}\times \Sigma$ is a mean curvature flow with $M(0)$
compact and $\cup_{t\in [0,\pi/2]}M(t)$ noncompact, and $t\mapsto M(t-\pi/2)$ is a 
mean curvature flow with $M(t)$ empty for $t=0$ but nonempty for $t\in (0,\pi)$.

\section{Barrier Modification}\label{barrier-modification-section}

\begin{lemma}\label{first-modification}
Suppose that $U$ is an open subset of $N$, that $t\in [a,b]\mapsto K(t)$
is a smooth barrier in $U$, and that $p\in U\in \partial K(b)$.
Then there is $\hat{a}\in [a,b)$ and a smooth compact barrier
\[
  t\in [\hat{a},b] \mapsto \Kup(t)
\]
in $U$ such that
\begin{gather*}
 \Kup(t) \subset \interior{K(t)} \quad \text{for $t\in [\hat{a},b)$}, \\
\Kup(b)\cap \partial K(b)=\{p\}, \,\text{and} \\
  \Phi_{\Kup}(p,b)=\Phi_K(p,b).
\end{gather*}
\end{lemma}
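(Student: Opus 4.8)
The plan is to construct $\Kup$ explicitly as a thin lens sitting just inside $\partial K(t)$ near $p$, exploiting that everything is smooth there. Since the required properties of $\Kup$ only involve $\Kup$, $\interior K(t)$, and $\partial K(b)$ near $p$, I would carry out the construction inside a small ball $B=B(p,\rho_0)$ with $\overline B\subset U$, chosen small enough (together with $\sigma_0>0$) that for $t\in[b-\sigma_0,b]$ the piece $\partial K(t)\cap B$ is a single smooth graph. Fix geodesic normal coordinates $x=(x',x_n)$, $x'=(x_1,\dots,x_{n-1})$, centered at $p$ with $x_n$ along $\nu_K(p,b)$ and $x'$ spanning $T_p\partial K(b)$; then for $(x',t)$ near $(0,b)$ one has $\partial K(t)=\{x_n=h(x',t)\}$ and locally $K(t)=\{x_n\le h(x',t)\}$, with $h$ smooth, $h(0,b)=0$, $\nabla_{x'}h(0,b)=0$.

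For small constants $0<\eps\ll\rho_1\ll\rho_0$ and $C,\mu>0$, I would then take
\[
  \Kup(t)=\Bigl\{\,x\in B:\ \frac{|x'|^2}{\rho_1^2}+\frac{(x_n-c(x',t))^2}{\eps^2}\le 1\,\Bigr\},
\]
where $c(x',t)$ is a smooth function (built with a cutoff in $|x'|$) arranged so that for $|x'|\le\rho_1/2$ the upper sheet $x_n=c(x',t)+\eps\sqrt{1-|x'|^2/\rho_1^2}$ of $\partial\Kup(t)$ equals the graph $x_n=h(x',t)-C|x'|^4-\mu(b-t)^2$, and so that for $\rho_1/2\le|x'|\le\rho_1$ the ellipsoid closes up strictly below $\partial K(t)$. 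Because $\sqrt{1-|x'|^2/\rho_1^2}$ is smooth on $\{|x'|<\rho_1\}$, this is a genuine smooth compact family of regions contained in $B\subset U$; I would set $\hat a=\max\{a,\,b-\sigma_0\}$.

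Then I would verify the conclusions. Compactness and smoothness are automatic. For $t<b$, the terms $C|x'|^4\ge0$ and $\mu(b-t)^2>0$ force the upper sheet strictly below $\partial K(t)$, and — $\eps,\rho_1,\sigma_0$ being small — the whole lens lies in $\{x_n<h(x',t)\}=\interior K(t)$; at $t=b$, $h(x',b)-C|x'|^4\le h(x',b)$ with equality iff $x'=0$, so $\Kup(b)\cap\partial K(b)=\{p\}$. For the identity at $p$: $-C|x'|^4$ has vanishing $2$-jet at $x'=0$, so at $p$ the graph $x_n=h(x',b)-C|x'|^4$ has the same value, gradient, and Hessian as $x_n=h(x',b)$; hence $\partial\Kup(b)$ and $\partial K(b)$ share tangent plane and second fundamental form at $p$, giving $\nu_{\Kup}=\nu_K$ and $\HH_{\Kup}=\HH_K$ there. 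Since $\mu(b-t)^2$ vanishes to second order at $t=b$, the curve $t\mapsto(0,\,h(0,t)-\mu(b-t)^2)$ — which lies in $\partial\Kup(t)$ and equals $p$ at $t=b$ — has the same $t$-derivative at $b$ as $t\mapsto(0,\,h(0,t))\in\partial K(t)$; by the curve description of normal velocity, $\vv_{\Kup}(p,b)=\vv_K(p,b)$, whence $\Phi_{\Kup}(p,b)=\vv_{\Kup}(p,b)-\HH_{\Kup}(p,b)=\vv_K(p,b)-\HH_K(p,b)=\Phi_K(p,b)$.

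The main obstacle is the middle step — choosing $c(x',t)$ so that the ellipsoidal cap is globally smooth, varies smoothly with $t$, and (using smallness of $\rho_1,\eps,\sigma_0$) stays in $\interior K(t)$ everywhere except at the single tangential contact $p$ at $t=b$. This is routine but bookkeeping-heavy; once it is in place, the rest is the $2$-jet computation above.
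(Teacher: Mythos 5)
Your construction is correct but takes a genuinely different route from the paper's. You work in normal coordinates near $p$, write $\partial K(t)$ locally as a graph $x_n=h(x',t)$, and build $\Kup(t)$ as an explicit ellipsoidal lens whose upper sheet is $h(x',t)-C|x'|^4-\mu(b-t)^2$ near $x'=0$; the $|x'|^4$ term (vanishing $2$-jet in $x'$) gives single-point contact and preserves the second fundamental form, while the $(b-t)^2$ term (vanishing $1$-jet in $t$) gives strict interior containment for $t<b$ and preserves the normal velocity, so $\Phi_{\Kup}(p,b)=\Phi_K(p,b)$; compactness comes from working in a small ball. The paper instead stays global and coordinate-free: it takes a defining function $f$ with $K(t)=\{f\le 0\}$, assumes $f$ bounded, picks a smooth proper function $\phi\ge 0$ on $U\times[a,b]$ that vanishes to infinite order at $(p,b)$ and is positive elsewhere, and sets $\fup=f+c\phi$. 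Properness of $\phi$ gives compactness of $\{\fup\le 0\}$ at once; positivity of $\phi$ off $(p,b)$ forces $\Kup(t)\subset\interior K(t)$ except for the single contact at $(p,b)$; infinite-order vanishing of $\phi$ at $(p,b)$ preserves $\Phi$; and Sard's theorem (applied to $-f(\cdot,b)/\phi(\cdot,b)$) selects a $c$ for which $0$ is a regular value of $\fup(\cdot,b)$, so $\Kup$ is a smooth barrier. The paper's route delivers compactness and regularity in one stroke and sidesteps the cutoff/interpolation bookkeeping you flag as the remaining work; your route is more hands-on and arguably more transparent geometrically. One small point worth making explicit in your write-up is that $\nabla F$ for $F=|x'|^2/\rho_1^2+(x_n-c)^2/\eps^2-1$ can only vanish at $x'=0$, $x_n=c$, which lies strictly inside $\{F<0\}$, so $\Kup$ is automatically a smooth barrier regardless of how $c(x',t)$ is chosen.
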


\begin{proof}
Let $f:U\times [a,b]\to \RR$ be as in Definition~\ref{barrier-definition}.
By postcomposing with a smooth bounded function, we can assume that $f$ is bounded.
Let $\phi:U\times [a,b]\to \RR$ be a smooth proper function such that $\phi$ vanishes to infinite
order at $(p,b)$ and such that $\phi>0$ at all other points.  By Sard's Theorem, almost every $c$
is a non-critical value of 
\[
   (x,t) \in (U\times[a,b])\setminus (p,b)  \mapsto -\frac{f(x,b)}{\phi(x,b)}.
\]
Choose such a $c>0$, and let
\[
  \fup(q,t):= f(q,t) + c\phi(q,t).
\]
Since $f$ is bounded and $\phi$ is proper,
\[
   \{(x,t)\in U\times [a,b]: \fup(x,t)\le 0\}
\]
is compact.
By choice of $c$, $\nabla\fup$ does not vanish anywhere on $\{x\in U: \fup(x,b)=0\}$.
Now choose $\hat{a}\in [a,b)$ sufficiently close to $a$ that $\nabla\fup$ does not
vanish anywhere on $\{(x,t): t\in [\hat{a},b], \, \fup(x,t)=0\}$.
\end{proof}

\begin{theorem}[Noncompact, nonsmooth barriers]\label{nonsmooth-barriers-theorem}
Suppose that $f:N\times [a,b]\to \RR$ is continuous, and let $K(t)=\{x:f(x,t)\le 0\}$ for $t\in [a,b]$.
Suppose that $Z$ is a weak set flow in $N$ with starting time $T_0<b$, that
$Z(t)$ is disjoint from the interior of $K(t)$ for all $t<b$, and that $p\in Z(b) \cap \partial K(b)$,

If $f$ is smooth in a spacetime neighborhood of $(p,b)$ and if $\nabla f(p,b)$ is nonzero,
then $\Phi_K(p,b)\ge 0$.
\end{theorem}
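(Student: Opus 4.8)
The plan is to reduce to the situation already covered by Definition~\ref{weak-set-flow-definition}: localize around $(p,b)$ so that the barrier is smooth, then use Lemma~\ref{first-modification} to shrink it to a genuinely compact smooth barrier that still lies inside the original and still touches it only at $p$, and finally quote the definition of weak set flow.

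First I would set up a good local picture. Since $f$ is smooth on a spacetime neighborhood of $(p,b)$ and $\nabla f(p,b)\ne 0$, continuity yields an open set $U\subset N$ containing $p$ and a time $\hat a_0$ with $T_0\le\hat a_0<b$ such that $f$ is smooth and $\nabla f$ is nowhere zero on $U\times[\hat a_0,b]$. Because $p\in\partial K(b)\subset K(b)$ and $\nabla f(p,b)\ne 0$, continuity of $f$ forces $f(p,b)=0$, so $p$ is an honest boundary point. Hence, by the equivalent formulation in Definition~\ref{barrier-definition}, the family
\[
  t\in[\hat a_0,b]\mapsto K_U(t):=K(t)\cap U=\{x\in U: f(x,t)\le 0\}
\]
is a smooth barrier in $U$ with $p\in U\cap\partial K_U(b)$. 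Since $K_U$ is defined by the same function $f$ near $(p,b)$, the quantities $\nu,\HH,\vv$ — and hence $\Phi$ — at $(p,b)$ are the same for $K_U$ as for $K$; in particular $\Phi_{K_U}(p,b)=\Phi_K(p,b)$.

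Next I would apply Lemma~\ref{first-modification} to the barrier $K_U$ in $U$ at the point $p$, obtaining $\hat a\in[\hat a_0,b)$ and a smooth compact barrier $t\in[\hat a,b]\mapsto\Kup(t)$ in $U$ with $\Kup(t)\subset\interior K_U(t)$ for $t\in[\hat a,b)$, with $\Kup(b)\cap\partial K_U(b)=\{p\}$, and with $\Phi_{\Kup}(p,b)=\Phi_{K_U}(p,b)=\Phi_K(p,b)$. Since $\cup_t\Kup(t)$ is a compact subset of $U$, each $\Kup(t)$ is a closed subset of $N$ and $\Kup$, being compactly contained in $U$, is a smooth compact barrier in $N$ as well, with $\hat a\ge\hat a_0\ge T_0$. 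For $t\in[\hat a,b)$ we have
\[
  \Kup(t)\subset\interior K_U(t)=\interior\bigl(K(t)\cap U\bigr)\subset\interior K(t),
\]
which is disjoint from $Z(t)$ by hypothesis, while $p\in\Kup(b)\cap Z(b)$. Applying Definition~\ref{weak-set-flow-definition} to the smooth compact barrier $\Kup$ therefore gives $\Phi_{\Kup}(p,b)\ge 0$, and hence $\Phi_K(p,b)\ge 0$.

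This is really just bookkeeping rather than a new idea, and I do not expect a genuine obstacle. The only points demanding care are that the hypothesis concerns disjointness from the \emph{interior} of $K(t)$ (not from $K(t)$ itself), which is exactly what is preserved under the localization and the inclusion $\Kup(t)\subset\interior K_U(t)\subset\interior K(t)$, and that the starting time of the new barrier stays $\ge T_0$; producing a truly compact barrier out of the possibly noncompact $K$ without disturbing $\Phi$ at the contact point is precisely the content of Lemma~\ref{first-modification}, which is already available.
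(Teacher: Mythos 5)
Your proposal is correct and follows essentially the same route as the paper: localize near $(p,b)$ so that $t\mapsto K(t)\cap U$ is a smooth barrier in $U$, invoke Lemma~\ref{first-modification} to produce a smooth compact barrier $\Kup$ inside $\interior K(t)$ for $t<b$ with the same $\Phi$ at the contact point, and then apply Definition~\ref{weak-set-flow-definition}. The extra bookkeeping you include (disjointness only from the interior, starting time $\ge T_0$, locality of $\Phi$) is consistent with the paper's argument.
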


\begin{proof}
Choose $U$ and $\eps$ small enough that $t\in [b-\eps,b]\mapsto K(t)\cap U$ is a smooth barrier in $U$.
By Lemma~\ref{first-modification}, there is smooth compact barrier
 $t\in [\hat{a},b]\mapsto \Kup(t)\subset U\cap K(t)$ such that $p\in \partial \Kup(b)$ and
    $\Phi_{\Kup}(p,b)=\Phi_K(p,b)$. 
By definition of weak set flow, $\Phi_{\Kup}(p,b)\ge 0$.
 \end{proof}

\begin{theorem}[Barrier Modification Theorem]\label{barrier-modification-theorem}
Suppose that $t\in [a,b]\mapsto K(t)$ is a smooth compact barrier in $N$,
that $p\in \partial K(b)$, and that 
\[
   \Phi_K(p,b):=\vv_K(p,b)-\HH_K(p,b)<\eta.
\]
Then there is an $\hat{a} \in [a,b)$ and a smooth compact
barrier $t\in [\hat{a},b]\mapsto \Kup(t)$ with the following properties:
\begin{enumerate}[\upshape (1)]
\item $\Kup(t)$ is contained in $K(t)$ for all $t\in [\hat{a},b]$.
\item $p\in \partial \Kup(b)$.
\item\label{lemma-claim-4} 
\[
   \lim_{x\in \partial K(b), \, x\to p} \frac{\dist(x, \Kup(b))}{\dist(x,p)^2}  > 0.
\]
\item\label{lemma-claim-5} 
    $\Phi_{\Kup}(x,t):=\vv_{\Kup}(x,t) - \HH_{\Kup}(x,t) < \eta$ 
    for all $(x,t)$ with $t\in [\hat a,b]$ and $x\in \partial \Kup(t)$.
\end{enumerate}
\end{theorem}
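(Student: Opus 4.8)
The plan is to realize $\Kup(t)$ as a sublevel set $\{\fup(\cdot,t)\le 0\}$ of an explicit perturbation of a defining function $f$ for $K$, of the form
\[
\fup \;=\; f + c_{1}\,r^{2} + c_{2}\,e^{\lambda(t-b)}\,r^{4},
\qquad r=\dist(\,\cdot\,,p),
\]
with $0<c_{1}\ll 1$, $c_{2}\gg 1$, $\lambda$ large compared with $c_{2}$, and $\hat a<b$ close to $b$. First I would localize: fix a normal coordinate ball $B(p,R)$ with $\overline B(p,R)$ compact, take $f\colon N\times[a,b]\to\RR$ as in Definition~\ref{barrier-definition}, arrange by post-composing with a bounded function that $|f|\le1$, and use smooth extensions of $r^{2},r^{4}$ to $N$ that are positive off $p$ and bounded away from $0$ outside $B(p,R/2)$, set $\Kup(t)=\{\fup(\cdot,t)\le 0\}$. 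Then $c_{1}r^{2},c_{2}e^{\lambda(t-b)}r^{4}\ge 0$ give $\Kup(t)\subset K(t)$, i.e.\ (1); since $r^{4}$ vanishes to fourth order at $p$, the last term changes neither $\fup$ nor $\nabla\fup$ at $(p,b)$, so $\fup(p,b)=0$ and $\nabla\fup(p,b)=\nabla f(p,b)\neq 0$, i.e.\ (2); and on $\partial K(b)$ near $p$ one has $\fup(x,b)=c_{1}r(x)^{2}+c_{2}r(x)^{4}\sim c_{1}r(x)^{2}$, so $\Kup(b)$ lies a distance $\sim c_{1}r(x)^{2}/|\nabla f(p,b)|$ inside $\partial K(b)$, giving (3).

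The two added terms play complementary roles. The \emph{quadratic peel} $c_{1}r^{2}$ is kept small: since $f+c_{1}r^{2}\to f$ in $C^{\infty}_{\mathrm{loc}}$ as $c_{1}\downarrow 0$, the value of $\Phi$ for the barrier $\{f+c_{1}r^{2}\le 0\}$ at $(p,b)$ tends to $\Phi_{K}(p,b)<\eta$, so for $c_{1}$ small it is still $<\eta$, and by continuity $\Phi_{\{f+c_{1}r^{2}=0\}}<\eta-\epsilon_{0}$ throughout $B(p,\delta_{0})\times(b-\delta_{0},b]$ for some $\epsilon_{0},\delta_{0}>0$. The \emph{confining term} $c_{2}e^{\lambda(t-b)}r^{4}$ forces $\Kup$ to live near $p$: on $\{\fup=0\}$ one has $c_{2}e^{-\lambda(b-\hat a)}r^{4}\le -f\le 1$, so choosing $\hat a$ with $\lambda(b-\hat a)<\log(c_{2}\delta_{0}^{4})$ (legitimate once $c_{2}\delta_0^4>1$) makes $\{\fup=0\}\subset B(p,\delta_{0})$; thus $\Kup$ is a compact barrier supported in $B(p,\delta_{0})$ (and empty outside $B(p,R/2)$), and after a Sard-type adjustment of the parameters, as in Lemma~\ref{first-modification}, $\nabla\fup$ is nonvanishing on $\{\fup=0\}$, so $\Kup$ is a genuine smooth barrier.

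For (4) write the level-set operator $F[u]=-\partial_{t}u+\Delta u-\nabla^{2}u(\nu,\nu)$ with $\nu=\nabla u/|\nabla u|$, so $\Phi_{u}=F[u]/|\nabla u|$ on $\{u=0\}$. On the part of $\{\fup(\cdot,t)=0\}$ with $r\gtrsim c_{2}^{-1/3}$ the quartic term dominates $\nabla\fup$ and $\nabla^{2}\fup$, the level set is close to a level set of $c_{2}e^{\lambda(t-b)}r^{4}$ --- a geodesic ball shrinking at radial rate $\tfrac{\lambda}{4}r$ --- and the shrinking-ball computation behind Theorem~\ref{shrinking-ball-theorem} gives $\Phi_{\Kup}\approx -\tfrac{\lambda}{4}r+\tfrac{m}{r}$, very negative for $c_{2}$ large. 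On the part with $r\lesssim c_{2}^{-1/3}$ one has $|\nabla\fup|\asymp|\nabla(f+c_{1}r^{2})|$, and a direct computation tracking the perturbation of $F$ gives
\[
\Phi_{\Kup}\ \le\ \Phi_{\{f+c_{1}r^{2}=0\}}\;+\;O(1)\;+\;\frac{c_{2}e^{\lambda(t-b)}r^{2}}{|\nabla\fup|}\,\bigl(c_{m}-\lambda r^{2}\bigr),
\]
with $c_{m}$ a dimensional constant and $O(1)$ a bounded error, uniform once $c_{1}$ is fixed. Since $r<\delta_{0}$ here, $\Phi_{\{f+c_{1}r^{2}=0\}}<\eta-\epsilon_{0}$; the last term is $\le\epsilon_{0}/4$ once $\lambda\gg c_{2}/\epsilon_{0}$ (it is $\le 0$ when $\lambda r^{2}\ge c_{m}$, and at most $c_{m}^{2}c_{2}/(\lambda\inf|\nabla\fup|)$ otherwise); and the $O(1)$ error is $\le\epsilon_{0}/4$ after shrinking $\delta_{0}$ if needed. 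Hence $\Phi_{\Kup}<\eta$ on the whole boundary.

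The one genuinely delicate point is (4) \emph{away} from $(p,b)$, where no bound on $\Phi_{K}$ is given at all: the new barrier must be squeezed into the neighborhood $B(p,\delta_{0})$ on which the quadratic-peel barrier is understood and must shrink fast there, yet \emph{any} nonnegative perturbation of $f$ vanishing at $(p,b)$ adds positive curvature near $p$ and so raises $\Phi$ there, forcing the near-$p$ perturbation to be tiny. Making the confining term vanish to fourth order reconciles these demands --- near $p$ its curvature contribution is only $O(c_{2}r^{2})$, while away from $p$ it dominates the geometry --- but it forces $\lambda$ to be large relative to $c_{2}$ (and then $\hat a$ close to $b$), and the real work is pushing the $\Phi$-estimate uniformly through the transition range $c_{2}^{-1/2}\lesssim r\lesssim c_{2}^{-1/4}$, where neither added term alone controls $\nabla\fup$ and $\nabla^{2}\fup$.
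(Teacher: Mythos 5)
Your construction is genuinely different from the paper's, and after working through the estimates I believe it can be made to go through, though it is considerably more delicate.  The paper also starts with a quadratic peel (there, $c(\delta(x)-t)$ with $\delta\sim\tfrac12 r^2$ near $p$), but then handles property~(4) by adding a term $t\,\Lambda\psi(x)$ where $\psi$ is a cutoff that \emph{vanishes on a neighborhood of $p$} and is positive on the bad set $\Sigma=\{\Phi_{\tK}(\cdot,0)\ge\eta\}$.  Because that term vanishes at $t=b$, the time-$b$ region, its boundary, and its mean curvature are all unchanged — only the normal velocity is altered — and because $\psi$ vanishes near $p$, the near-$p$ analysis is completely untouched.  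This double decoupling (in $t$ at $t=b$, in $x$ near $p$) is what makes the paper's argument so short.  Your quartic confining term $c_2e^{\lambda(t-b)}r^4$ is active both at $t=b$ and near $p$, so you lose both decouplings and must instead push the estimate $\Phi_{\Kup}<\eta$ through the transition range $r\sim c_2^{-1/3}$ by choosing $\lambda$ large relative to $c_2$, which you correctly identify as the crux.  The trade is that your barrier is fully explicit (no choice of cutoff or Sard-adjusted coefficient on it) and the mechanism by which the velocity term defeats the curvature is transparent, but you pay for it with a two-regime analysis that the paper's cutoff avoids entirely.

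Two points of imprecision worth fixing in your sketch.  First, in the displayed estimate
\[
\Phi_{\Kup}\ \le\ \Phi_{\{f+c_{1}r^{2}=0\}}\;+\;O(1)\;+\;\frac{c_{2}e^{\lambda(t-b)}r^{2}}{|\nabla\fup|}\bigl(c_{m}-\lambda r^{2}\bigr),
\]
the cross-term coming from replacing $\nu_{g}$ by $\nu_{\fup}$ in $\nabla^{2}g(\nu,\nu)$ is of size $O(|\nabla h|/|\nabla\fup|^2)=O\bigl(c_{2}e^{\lambda(t-b)}r^{3}/|\nabla\fup|^{2}\bigr)$; at $r\sim c_{2}^{-1/3}$ this is genuinely of order one, and a free-standing ``$O(1)$'' of that origin is \emph{not} made small by shrinking $\delta_0$ (shrinking $\delta_0$ forces $c_2$ larger to keep the confinement, which moves but does not remove the offending range of $r$).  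What saves you is that this error factors through the same prefactor $c_{2}e^{\lambda(t-b)}r^{2}/|\nabla\fup|$ as the explicit terms, contributing $O(r)$ \emph{inside} the parenthesis; once there it is absorbed by $c_m$ for $r\le\delta_0$ small and then beaten by $-\lambda r^{2}$ for $\lambda\gg c_{2}/\epsilon_{0}$, exactly as you argue for $c_m$.  You should state the estimate in that form rather than with a detached $O(1)$.  Second, the Sard step is not quite as in Lemma~\ref{first-modification}, where a single coefficient $c$ is Sard-adjusted: one cannot rule out $\nabla\fup=0$ by a lower bound alone, since near $p$ in the $-\nu_{K}$ direction $\nabla g$ and $\nabla h$ are roughly antiparallel, so in the transition range they could cancel.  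The fix is the same in spirit — Sard in the coefficient $c_{2}$ (with $c_{1},\lambda,b$ fixed) to make $0$ a regular value of $\fup(\cdot,b)$, then take $\hat a$ close to $b$ — but you should say which parameter plays the role of $c$.
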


\begin{proof} 
It suffices to consider the case $[a,b]=[a,0]$.
Let $f:N\times [a,0]\to \RR$ be as in Definition~\ref{barrier-definition}.  By multiplying $f$ by a constant,
we can assume that $|\nabla f(p,0)|=1$.  We can also assume that $f$ is proper, i.e., that $f(p_i)\to\infty$
provided $p_i$ is a divergent sequence in $N$.

Let $\delta(\cdot)$ be a smooth bounded function on $N$ that is positive on $N\setminus\{p\}$ and that
coincides with $\frac12\dist(\cdot,p)^2$
in a neighborhood of $p$. 
Let 
\begin{align*}
&\tf: N\times [a,0]\to \RR, \\
&\tf(x,t) = f(x,t) + c \left(\delta(x) - t \right),
\end{align*}
and let
\[
   \tK(t) = \{(x,t): \tf \le 0\},
\]
where $c$ is a positive constant that will be specified below.  

Since $0$ is a regular value of $f(\cdot,0)$, there is an $\eps>0$ 
such that $0$ is a regular value of $\tf(\cdot,0)$ provided $c\in [0,\eps]$.
Fix a $c\in (0,\eps]$ such that $\Phi_{\tK}(p,0)<\eta$.
(This is possible since $\Phi_{\tK}(p,0)$ depends continuously on $c$.)

Since $\tf\ge f$ with strict inequality except
at $(p,0)$, we see that 
\begin{equation}\label{tilde-K-inside}
   \tK(t) \subset\interior(K(t))  \quad\text{for $t\in [a,0)$}
\end{equation}
and
\[
  \tK(0) \setminus \interior(K(0)) = \{p\}.
\]

Note also that
\[
  \lim_{x\in \partial \tK(0), \, \dist(x,p)\to 0} \frac{\dist(x,\partial K(0))}{\dist(x,p)^2} = c.
\]

Let $\psi:N\to \RR$ be a smooth, bounded, nonnegative function such that $\psi$ vanishes
on an open set $U$ containing $p$ and such that $\psi>0$ at all points of the set
\[
  \Sigma:= \{x\in \partial \tK(0): \Phi_{\tK}(x,0) \ge \eta \}.
\]

Now let
\begin{align*}
&\fup: N\times [a,0]\to\RR, \\
&\fup(x,t) = \tf(x,t) + t\Lambda \psi(x),
\end{align*}
and
\[
  \Kup(t)= \{x:  \fup(x,t)\le 0\} \qquad (t\in [a,0]),
\]
where $\Lambda$ is a positive constant that will be specified below.

Note that $\fup(\cdot,0)=\tf(\cdot,0)$, so $\Kup(0)=\tK(0)$.
Thus for $x\in \partial \Kup(0)$,
\[
    \HH_{\Kup}(x,0)= \HH_{\tK}(x,0)
\]
and 
\begin{align*}
\vv_{\Kup}(x,0) 
&=  \vv_{\tK}(x,0) - \Lambda w(x)
\end{align*}
(by~\eqref{f-expressions}), where 
\[
   w(x) = \frac{\psi(x)}{|\nabla \fup(x,0)|} = \frac{\psi(x)}{|\nabla \tf(x,0)|}.
\]
Consequently,
\[ 
\Phi_{\Kup}(x,0) = \Phi_{\tK}(x,0) -  \Lambda w(x).
\]
Thus if $x\notin\Sigma$, then
\begin{equation*}
 \Phi_{\Kup}(x,0) \le  \Phi_{\tK}(x,0) < \eta, 
\end{equation*}
and if $x\in \Sigma$, then
\begin{equation*}
\Phi_{\Kup}(x,0)
<
(\max_{y\in\Sigma} \Phi_{\tK}(y,0)) - \Lambda (\min_\Sigma w). 
\end{equation*}
Choose $\Lambda>0$ large enough that this last expression is $<\eta$.  Then
\begin{equation*}
\Phi_{\Kup}(x,0) < \eta  \qquad (x\in \partial \Kup(0)).
\end{equation*}

Since
\[
   \Kup(0)\setminus U \subset \interior(K(0)),
\]
there is an $\hat{a}\in [a,0)$ such that
\[
   \Kup(t) \setminus U \subset \interior(K(t)) \quad\text{for all $t\in [\hat{a},0]$.}
\]  
On the other hand, since $\psi$ vanishes on $U$,
\[
    \Kup(t)\cap U = \tK(t) \cap U \subset \interior(K(t)) \quad\text{for all $t\in [\hat{a},0)$}
\]
by~\eqref{tilde-K-inside}. 
Thus
\[
   \Kup(t)\subset \interior{K(t)} \quad\text{for all $t\in [\hat{a},0)$}.
\]

Finally, since $0$ is a regular value of $\fup(\cdot,0)$ and since $\Phi_{\Kup}(\cdot,0)<\eta$ everywhere 
on $\partial \Kup(0)$,
we can choose $\hat{a}$ close enough to $0$ to guarantee that $0$ is a regular value of $\fup(\cdot,t)$ for all 
 $t\in [\hat{a},0]$ and that $\Phi_K(\cdot,t)<\eta$ everywhere on $\partial \Kup(t)$ for all $t\in [\hat{a},0]$.
 \end{proof}

\section{Bounds on the Distance Function}\label{distance-function-section}

\begin{lemma}\label{key-lemma}
Suppose that $N$ is a smooth Riemannian manifold, 
that $Z(\cdot)$ is a weak set flow in $N$ with starting time $T_0$,
 and that
\[
   t \in [a,b] \mapsto K(t)
\]
is a smooth barrier with $a\ge T_0$.
 Suppose that $\lambda\in \RR$, that
\[
   t\in [a,b] \mapsto e^{-\lambda t}\dist(K(t),Z(t))
\]
attains a positive minimum at time $t=b$,
and that there is a geodesic 
\[
   \gamma:[0,L] \to N
\]
parametrized by arclength such that
\begin{align*}
p &:=\gamma(0)\in K(b), \\
q &:= \gamma(L)\in Z(b), \quad\text{and} \\
L &= \dist(K(b),Z(b)).
\end{align*}
If $\Ricci(\gamma',\gamma')>\lambda$ on $[0,L]$, 
then 
\[
  \Phi_K(p,b) > 0.
\]
\end{lemma}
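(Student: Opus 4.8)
The plan is to argue by contradiction: suppose $\Phi_K(p,b)\le 0$. Since $L=\dist(K(b),Z(b))>0$ and the continuous function $s\mapsto\Ricci(\gamma'(s),\gamma'(s))$ is everywhere greater than $\lambda$ on the compact interval $[0,L]$, the number $\mu:=\int_0^L\Ricci(\gamma'(s),\gamma'(s))\,ds-\lambda L$ is strictly positive. The first step is to replace $K$ by a better-behaved barrier. Applying Lemma~\ref{first-modification} (with $U=N$) gives a smooth \emph{compact} barrier lying in $K$ and meeting $\partial K(b)$ only at $p$, with $\Phi$ unchanged at $(p,b)$; and since the new barrier is contained in the old one, one checks that the hypotheses (the positive minimum at $t=b$ and the fact that $\gamma$ realizes $\dist(K(b),Z(b))$) persist. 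Because $\Phi_K(p,b)\le 0<\mu/4$, I would then apply Theorem~\ref{barrier-modification-theorem} with $\eta=\mu/4$ to obtain a smooth compact barrier $K'$ with $K'(t)\subset K(t)$, $p\in\partial K'(b)$, $\Phi_{K'}(p,b)<\mu/4$, and the quadratic touching property; again all hypotheses persist. The point of the quadratic touching is that it forces the second fundamental form of $\partial K'(b)$ at $p$ to \emph{strictly} exceed, by a positive multiple of the metric, the second fundamental form the barrier had at $p$ just before this last modification.

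Next I would record the geometry along $\gamma$. Because $p=\gamma(0)\in\partial K'(b)$, $q=\gamma(L)\notin K'(b)$, and $L=\dist(K'(b),Z(b))$, the triangle inequality gives $\dist(\gamma(s),K'(b))=s$ for every $s\in[0,L]$, and $\gamma$ meets $\partial K'(b)$ orthogonally at $p$ pointing out of $K'(b)$, so $\gamma'(0)=\nu_{K'}(p,b)$; in particular $\gamma$ minimizes the distance to $\partial K'(b)$ along its whole length, hence meets no focal point of $\partial K'(b)$ on $[0,L)$. The same held a moment ago for the boundary of the pre-modification barrier, so, since the last modification \emph{strictly} increased the second fundamental form at $p$ while leaving the ambient curvature along $\gamma$ untouched, a Sturm/Riccati comparison for the shape operator pushes the first focal point of $\partial K'(b)$ along $\gamma$ \emph{strictly beyond} $L$. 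Consequently, if $\Sigma_t$ denotes a small fixed disc in $\partial K'(t)$ centered at $p$, the function $\hat r(x,t):=\dist(x,\Sigma_t)$ is smooth, with unit spatial gradient, in a spacetime neighborhood of $(q,b)$.

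Now I would build the barrier to play against $Z$. Put $c:=\mu/(2L)$, $\rho(t):=e^{(\lambda+c)(t-b)}L$, and $\hat K(t):=\{x:\hat r(x,t)\le\rho(t)\}$. For $t<b$, the hypothesis that $e^{-\lambda t}\dist(K'(t),Z(t))$ is minimized at $t=b$ gives $\dist(K'(t),Z(t))\ge e^{\lambda(t-b)}L>\rho(t)$; since $\hat r(\cdot,t)\ge\dist(\cdot,K'(t))$, the set $\hat K(t)$ is disjoint from $Z(t)$ for $t<b$. On the other hand $\hat r(q,b)=L=\rho(b)$ (because $p\in\Sigma_b$ and $\dist(q,K'(b))=L$), so $q\in Z(b)\cap\partial\hat K(b)$. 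Since $\hat r-\rho$ is continuous, and smooth with nonzero spatial gradient near $(q,b)$, Theorem~\ref{nonsmooth-barriers-theorem} applies and yields $\Phi_{\hat K}(q,b)\ge 0$.

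Finally I would compute $\Phi_{\hat K}(q,b)$ and reach a contradiction. By the first variation of arclength the foot point of $q$ on $\Sigma_b$ is $p$, and it moves with normal velocity $\vv_{K'}(p,b)$ in the direction $\gamma'(0)=\nu_{K'}(p,b)$, so $\vv_{\hat K}(q,b)=\vv_{K'}(p,b)+\rho'(b)=\vv_{K'}(p,b)+(\lambda+c)L$. Writing $\theta(s)$ for the mean curvature at $\gamma(s)$ of the distance-$s$ parallel hypersurface of $\Sigma_b$ (so $\theta(0)=\HH_{K'}(p,b)$ and $\theta(L)=\HH_{\hat K}(q,b)$), the Riccati equation for the shape operator $S(s)$ of that parallel gives $\theta'(s)=|S(s)|^2+\Ricci(\gamma'(s),\gamma'(s))\ge\Ricci(\gamma'(s),\gamma'(s))$, hence $\HH_{\hat K}(q,b)=\theta(L)\ge\HH_{K'}(p,b)+\lambda L+\mu$. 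Therefore
\[
\Phi_{\hat K}(q,b)=\vv_{\hat K}(q,b)-\HH_{\hat K}(q,b)\le\Phi_{K'}(p,b)+cL-\mu<\tfrac{\mu}{4}+\tfrac{\mu}{2}-\mu<0,
\]
contradicting $\Phi_{\hat K}(q,b)\ge 0$. Thus $\Phi_K(p,b)>0$. I expect the delicate step to be the smoothness of the pushed-out barrier at the contact point $q$ — that is, guaranteeing that $\gamma$ stays clear of focal points of $\partial K'(b)$ all the way out to $q$. This is exactly why one needs the \emph{quadratic} barrier modification of Theorem~\ref{barrier-modification-theorem} (which strictly convexifies $\partial K'(b)$ at $p$) rather than the infinitely flat one of Lemma~\ref{first-modification}, and it is also what allows the Riccati comparison to be carried over the full closed interval $[0,L]$.
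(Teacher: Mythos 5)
Your proposal is correct and follows essentially the same route as the paper's proof: modify the barrier (via Theorem~\ref{barrier-modification-theorem}) to get quadratic separation at $p$ so that the distance function is smooth near $q$, push it out by an exponentially growing radius to create a barrier first touching $Z$ at $(q,b)$, apply Theorem~\ref{nonsmooth-barriers-theorem} there, and contradict via the Riccati/Ricci comparison of $\HH$ and the growth rate of $\vv$ along $\gamma$. The only differences are bookkeeping: you first compactify via Lemma~\ref{first-modification} and generate slack through the faster growth exponent $\lambda+\mu/(2L)$, whereas the paper proves the sharper bound $\Phi_K(p,b)\ge(\lambda_0-\lambda)L$ with $\lambda_0$ the minimum of $\Ricci(\gamma',\gamma')$, and you spell out the focal-point argument that the paper leaves implicit.
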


\begin{proof}
We may assume that $b=0$. 
Unfortunately, the signed distance function to $\partial K(0)$ need not
be smooth at the point $q=\gamma(L)$.  (It is smooth in a neighborhood
of each $\gamma(s)$ with $s\in [0,L)$.)   
 We will use the Barrier Modification Theorem~\ref{barrier-modification-theorem} 
 to get around
the lack of smoothness.

Let $\lambda_0$ be the minimum of $\mathrm{Ric}(\gamma',\gamma')$ on $[0,L]$.
Thus $\lambda_0>\lambda$.
We will prove the lemma by proving that 
\begin{equation}\label{sharper}
    \Phi_K(p,0)\ge (\lambda_0 - \lambda) L.
\end{equation}
Suppose that~\eqref{sharper} does not hold.
Then by the Barrier Modification Theorem~\ref{barrier-modification-theorem}, there is a smooth, compact
barrier
\[
   t\in [-\eps,0]\mapsto \Kup(t)
\]
such that:
\begin{equation}\label{all-stuff}
\begin{gathered}
\Kup(t) \subset K(t) \quad (t\in [-\eps,0]), \\
\Kup(t)\subset \operatorname{\interior}(K(t)) \quad (t<0), \\
\Kup(0)\cap \partial K(0)= \{p\}, \\
\liminf_{x\in \partial K(0), \, x\to p} \frac{\dist(x, \Kup(0))}{\dist(x,p)^2} > 0, \\
\Phi_{\Kup}(p,0) < (\lambda_0-\lambda)L.
\end{gathered}
\end{equation}

Note that $\dist(\cdot,\Kup(0))$ is smooth
on an open set containing $q$, and thus 
\[
   (t,x) \mapsto \dist(x, e^{\lambda t} \Kup(t))
\]
is smooth on an open spacetime set containing $(q,0)$.  
For $t\in [-\eps,0]$, let
\[
   \tK(t) =  \{x\in W: e^{-\lambda t} \dist(x,\partial \Kup(t)) \le L \}.
\]
By~\eqref{all-stuff},
    $Z(t)\cap W$ and $\widetilde{K}(t)$ are disjoint for $t<0$
    and $Z(0)\cap \widetilde{K}(0)=\{q\}$.
Thus
\begin{equation}\label{rosebud}
   \Phi_{\widetilde{K}}(q,0) \ge 0
\end{equation}
by Theorem~\ref{nonsmooth-barriers-theorem}.

A standard computation
 (cf.~\cite{White_controlling}*{Lemma~12.2}) shows that
\begin{equation}\label{sagan}
\begin{aligned}
  \HH_{\widetilde{K}}(q,0) 
  &\ge  \HH_{\Kup}(p,0) + \int_0^L \Ricci_{\gamma(s)}(\gamma'(s),\gamma'(s)) \,ds \\
  &\ge  \HH_{\Kup}(p,0) + \lambda_0 L.
\end{aligned}
\end{equation}
Note also that
\begin{equation}\label{sophie}
  \vv_{\widetilde{K}}(q,0) 
  = \vv_{\Kup}(p,0) + \lambda L. 
\end{equation}

By~\eqref{rosebud}, \eqref{sagan}, and~\eqref{sophie},
\begin{align*}
0 
&\le \Phi_{\widetilde{K}}(q,0) \\
&= \vv_{\widetilde{K}}(q,0) - \HH_{\widetilde{K}}(q,0) \\
&\le \vv_{\Kup}(p,0) - \HH_{\Kup}(p.0) + (\lambda - \lambda_0)L \\
&= \Phi_{\Kup}(p,0) + (\lambda-\lambda_0)L,
\end{align*}
contradicting~\eqref{all-stuff}.
\end{proof}

\begin{proposition}\label{key-proposition}
Suppose $t\in [0,T]\mapsto K(t)$ is a compact smooth barrier such that $\Phi_K(p,t)\le 0$ 
for all $t\in [0,T]$ and $p\in \partial K(t)$.
Suppose that $\eta>0$, that $\lambda\in \RR$, that the set
\[
     Q:= \{ (p,t): t\in [0,T], \, \dist(p,K(t)) \le e^{\lambda t}\eta \}
\]
is compact, and that $\lambda$ is a strict lower bound for Ricci curvature on $\cup_{t\in [0,T]}Q(t)$.
If $t\in [0,T]\mapsto Z(t)$ is a weak set flow and if
\[
   \dist(Z(0), K(0)) > \eta,
\]
then
\[
  \dist(Z(t),K(t)) > e^{\lambda t} \eta
\]
for all $t\in [0,T]$.
\end{proposition}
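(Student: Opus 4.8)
The plan is to argue by contradiction and reduce everything to Lemma~\ref{key-lemma}. Write $g(t):=e^{-\lambda t}\dist(Z(t),K(t))$, with the convention $g(t):=+\infty$ when $Z(t)=\emptyset$; the hypothesis is that $g(0)>\eta$ and the assertion is that $g(t)>\eta$ for all $t\in[0,T]$. Suppose this fails, so that $S:=\{t\in[0,T]:g(t)\le\eta\}$ is nonempty (equivalently, $Z\cap Q\ne\emptyset$). First I would show $S$ is closed. If $t_i\in S$ and $t_i\to t$, choose $p_i\in K(t_i)$ and $q_i\in Z(t_i)$ with $\dist(p_i,q_i)=\dist(K(t_i),Z(t_i))\le e^{\lambda t_i}\eta$. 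Then $(p_i,t_i)$ lies in the spacetime barrier $K$, which is compact, and $(q_i,t_i)\in Q$, which is compact by hypothesis; passing to a subsequence, $p_i\to p\in K(t)$ and $q_i\to q$ with $(q,t)\in Z$ (since $Z$ is closed) and $(q,t)\in Q$. Continuity of the distance function gives $\dist(p,q)\le e^{\lambda t}\eta$, so $t\in S$. Since $g(0)=\dist(Z(0),K(0))>\eta$, we have $0\notin S$; hence $b:=\min S$ exists, $b>0$, $g(b)\le\eta$, and $g(t)>\eta$ for all $t\in[0,b)$.

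Next I would pin down the value of $g$ at $b$. Extend the barrier $K$ to all of $\RR$ by constant families outside $[0,T]$, so that $u(x,t):=e^{-\lambda t}\dist(x,K(t))$ is continuous on $N\times\RR$; its associated function $\widetilde u(t)=\inf_{x\in Z(t)}u(x,t)$ is exactly $g(t)$. Corollary~\ref{shrinking-ball-corollary} then gives $g(b)\ge\limsup_{t\uparrow b}g(t)$, and since $g>\eta$ on $[0,b)$ this forces $g(b)\ge\eta$. Therefore $g(b)=\eta$, so $L:=\dist(Z(b),K(b))=e^{\lambda b}\eta>0$, and the function $t\in[0,b]\mapsto e^{-\lambda t}\dist(K(t),Z(t))$ attains its (positive) minimum at $t=b$.

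Then I would construct the geodesic required by Lemma~\ref{key-lemma}. Since $Z\cap Q$ is compact and meets the time slice $\{t=b\}$, there is $q\in Z(b)$ with $\dist(q,K(b))\le e^{\lambda b}\eta=L$, hence $\dist(q,K(b))=L$; as $K(b)$ is compact, there is $p\in K(b)$ with $\dist(p,q)=L$. The closed ball $\overline B(p,L)$ is contained in $Q(b)=\{x:\dist(x,K(b))\le e^{\lambda b}\eta\}$, which is a closed subset of the compact set $Q$, hence compact; so by the standard Hopf--Rinow argument there is a unit-speed minimizing geodesic $\gamma:[0,L]\to N$ with $\gamma(0)=p$ and $\gamma(L)=q$. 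For each $s\in[0,L]$ one has $\dist(\gamma(s),K(b))\le s\le L=e^{\lambda b}\eta$, so $\gamma(s)\in Q(b)\subset\cup_{t\in[0,T]}Q(t)$, and therefore $\Ricci_{\gamma(s)}(\gamma'(s),\gamma'(s))>\lambda$ by hypothesis. Moreover $p\in\partial K(b)$, since $p$ is the point of $K(b)$ closest to $q\notin K(b)$.

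All the hypotheses of Lemma~\ref{key-lemma} are now satisfied (with $[a,b]=[0,b]$ and starting time $T_0=0$), so $\Phi_K(p,b)>0$; this contradicts the standing assumption $\Phi_K(p,b)\le0$, and the proof is complete. The only delicate steps are the first two paragraphs: one must exploit the compactness of $Q$ (together with the closedness of $Z$ and of the spacetime barrier) to obtain a genuine first bad time $b$, and then invoke Corollary~\ref{shrinking-ball-corollary} to identify $e^{-\lambda b}\dist(Z(b),K(b))$ as exactly $\eta$ rather than merely $\le\eta$ — which is what guarantees $L>0$ and lets Lemma~\ref{key-lemma} apply. Once $b$ is in hand, Lemma~\ref{key-lemma} is precisely the tool designed to finish the argument.
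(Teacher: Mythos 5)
Your argument is correct and follows essentially the same route as the paper: argue by contradiction, use compactness of $Q$ to locate a first time $b$ at which $e^{-\lambda t}\dist(Z(t),K(t))$ reaches $\eta$, apply Corollary~\ref{shrinking-ball-corollary} to see that the value at $b$ is exactly $e^{\lambda b}\eta$, and then invoke Lemma~\ref{key-lemma} to contradict $\Phi_K\le 0$. The extra details you supply (closedness of the set of bad times, existence of the minimizing geodesic inside $Q(b)$, and the Ricci bound along it) are exactly the points the paper leaves implicit, so no further changes are needed.
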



\begin{proof}
Suppose not.  Then $Q\cap Z$ is a nonempty compact subset of space time.
Thus there is first time $t$ such that $Q(t)\cap Z(t)$ is nonempty. By hypothesis, $t>0$.
Let $q\in Z(t)\cap Q(t)$:
\[
   \dist(q, K(t)) \le e^{\lambda t} \eta.
\]
Thus $\dist(q,K(t))=e^{\lambda t}\eta$ by
 Corollary~\ref{shrinking-ball-corollary} (applied to the function $u(x,\tau)=\dist(x, K(\tau))$).
Let $p\in K(t)$ be a point closest to $q$.
By Lemma~\ref{key-lemma}, $\Phi_K(p,t)>0$, a contradiction.
\end{proof}

\begin{corollary}\label{key-corollary}
In Proposition~\ref{key-proposition}, if $t\mapsto Z(t)$ is a weak set flow in $N$ such that $\dist(Z(0),K(0))\ge \eta$,
then $\dist(Z(t),K(t))\ge e^{\lambda t} \eta$ for all $t\in [0,T]$.
\end{corollary}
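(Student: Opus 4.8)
The plan is to obtain the non-strict inequality as a limit of the strict inequalities furnished by Proposition~\ref{key-proposition}. Fix an arbitrary $\eta'\in(0,\eta)$, and apply Proposition~\ref{key-proposition} with $\eta'$ in place of $\eta$, keeping the same barrier $K$, the same $\lambda$, and the same weak set flow $Z$. The initial separation hypothesis is satisfied because $\dist(Z(0),K(0))\ge\eta>\eta'$, and the hypothesis $\Phi_K\le 0$ on $\partial K(\cdot)$ is unchanged. Conclusion of that application: $\dist(Z(t),K(t))>e^{\lambda t}\eta'$ for all $t\in[0,T]$.

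What must be checked is that the remaining hypotheses of Proposition~\ref{key-proposition} — compactness of the shrinking-neighborhood set and the strict Ricci lower bound on it — still hold when $\eta$ is replaced by $\eta'$. Set
\[
   Q':=\{(p,t): t\in[0,T],\ \dist(p,K(t))\le e^{\lambda t}\eta'\}.
\]
Since $\eta'<\eta$ we have $Q'\subseteq Q$, and since $K$ is a compact smooth barrier the function $(p,t)\mapsto\dist(p,K(t))$ is continuous, so $Q'$ is closed; being a closed subset of the compact set $Q$, it is compact. Likewise $\cup_{t\in[0,T]}Q'(t)\subseteq\cup_{t\in[0,T]}Q(t)$, so $\lambda$ is a strict lower bound for Ricci curvature on $\cup_{t\in[0,T]}Q'(t)$ as well. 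Thus Proposition~\ref{key-proposition} does apply with parameter $\eta'$.

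Finally, since $\eta'\in(0,\eta)$ was arbitrary and $e^{\lambda t}>0$ for each fixed $t$, letting $\eta'\uparrow\eta$ in the inequality $\dist(Z(t),K(t))>e^{\lambda t}\eta'$ gives $\dist(Z(t),K(t))\ge e^{\lambda t}\eta$ for all $t\in[0,T]$, as desired. I do not expect any real obstacle here: the entire content is the observation that shrinking $\eta$ only shrinks the neighborhood set $Q$, so compactness and the curvature bound are automatically inherited, after which the result is a one-line passage to the limit from Proposition~\ref{key-proposition}.
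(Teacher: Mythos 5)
Your proposal is correct and is essentially the paper's own argument: apply Proposition~\ref{key-proposition} with any $\eta'\in(0,\eta)$ (the hypotheses for $\eta'$ are inherited since $Q'\subseteq Q$) and let $\eta'\uparrow\eta$. The only difference is that you spell out the inheritance of the compactness and Ricci hypotheses, which the paper leaves implicit.
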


\begin{proof}
By Proposition~\ref{key-proposition}, 
\[
   \dist(Y(t),V(t)) > e^{\lambda t}\delta   \quad (t\in [0,T])
\]
holds for all $\delta$ with $0<\delta<\eta$.
The result follows immediately.
\end{proof}

\begin{remark}\label{key-remark}
Proposition~\ref{key-proposition} and Corollary~\ref{key-corollary}
 remain true if $t\in [0,T]\mapsto K(t)$ is a smooth mean curvature flow of closed hypersurfaces.
 The proofs are the same except for minor changes of notation.
Furthermore, in this case, one sees from the proof that it is not necessary for the flow to be smooth at
the initial time:
$K$ can be any compact subset of $N\times [0,T]$ such that $t\in (0,T]\mapsto K(t)$
is a smooth mean curvature flow.
\end{remark}

\begin{theorem}\label{distance-theorem}
Suppose $t\in [0,T]\mapsto Y(t)$ and $t\in [0,T]\mapsto Z(t)$ 
are weak set flows in a smooth Riemannian manifold $N$.
Suppose $\eta>0$ and $\lambda$ are such that the set
\[
   Y^\eta_\lambda := \cup_{t\in [0,T]} \{ p: \dist(p,Y(t)) \le e^{\lambda t}\eta\}
\]
is compact.   Suppose also that $\lambda$ is a lower bound for Ricci curvature of $N$
on the set $Y^\eta_\lambda$.
If 
\begin{equation}\label{key-inequality}
   \dist(Z(t),Y(t)) \ge e^{\lambda t} \eta
\end{equation}
holds for $t=0$, then it holds for all $t\in [0,T]$.
\end{theorem}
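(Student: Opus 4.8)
The plan is to reduce the statement about two weak set flows to the situation already handled by Proposition~\ref{key-proposition}/Corollary~\ref{key-corollary}, where one of the two competing objects is a \emph{smooth compact barrier} rather than a second weak set flow. The obstruction to applying those results directly is that $Y(\cdot)$ is merely a weak set flow, not a smooth barrier, so we cannot feed it into the role of $K$. The idea is to introduce a third object that separates them: a smooth compact barrier $K(\cdot)$ sitting strictly between $Y(\cdot)$ and $Z(\cdot)$, running the distance estimate on each side and adding the two up. Concretely, one fixes $t\in[0,T]$ and $\eta'>0$, $\eta''>0$ with $\eta'+\eta''$ slightly less than $\eta$, and produces a smooth compact barrier $s\in[0,t]\mapsto K(s)$ which is a (strict) \emph{subsolution} of mean curvature flow with $\dist(Y(s),K(s))\ge e^{\lambda s}\eta'$ and $\dist(K(s),Z(s))\ge e^{\lambda s}\eta''$ for all $s$; then the triangle inequality gives $\dist(Y(t),Z(t))\ge e^{\lambda t}(\eta'+\eta'')$, and letting $\eta'+\eta''\uparrow\eta$ finishes the proof.

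The two distance bounds for $K$ come from Corollary~\ref{key-corollary} applied twice: once with $K$ as the barrier and $Z$ as the weak set flow, and once with $K$ as the barrier and $Y$ as the weak set flow (the hypotheses of Proposition~\ref{key-proposition} — compactness of the relevant enlarged set and $\lambda$ being a strict lower bound for Ricci there — are inherited from the compactness of $Y^\eta_\lambda$ and the Ricci bound on it, provided $K$ is taken inside a small neighborhood of $Y^\eta_\lambda$). So the real work is the \emph{construction} of such a barrier $K$: a smooth compact strict-subsolution hypersurface moving near $Y(\cdot)$ that stays at controlled distance $\approx e^{\lambda s}\eta'$ from $Y(s)$ on one side while leaving room $\approx e^{\lambda s}\eta''$ on the other. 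This is exactly the kind of mollified/approximated tube around a weak set flow produced by Ilmanen's interpolation theorem, which the introduction and appendix flag as a key tool: one takes a level set $\{x:\dist(x,Y(s))=e^{\lambda s}\eta'\}$-type object, slightly shrinks and smooths it so that it becomes a genuine smooth compact barrier that is a strict subsolution, while remaining disjoint from $Y(s)$ (with distance bounded below by, say, $e^{\lambda s}\eta'/2$ or better) and still a definite distance from $Z(s)$.

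The main obstacle will be this interpolation/approximation step: guaranteeing simultaneously that (a) the approximating family is a smooth one-parameter family of smooth compact hypersurfaces, (b) it is a \emph{strict} subsolution $\Phi_K\le 0$ (or $<\eta$ in the sense needed), and (c) it is squeezed into the thin region between the two flows with the two-sided distance control. Once the barrier is in hand, the rest is a routine bookkeeping argument: run Corollary~\ref{key-corollary} on each side, add, and take a limit as the two gap parameters exhaust $\eta$. One should also note that it suffices to prove the estimate for each fixed terminal time $t\le T$, and that the freedom to choose $\eta'$, $\eta''$ only slightly less than $\eta$ means the enlarged sets stay within a small neighborhood of $Y^\eta_\lambda$, so the compactness and Ricci hypotheses remain available throughout.
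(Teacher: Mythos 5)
Your overall skeleton --- insert a separating object between $Y$ and $Z$, run the one-sided distance estimate of Corollary~\ref{key-corollary} on each side, and add the two contributions using the separation --- is indeed the paper's strategy. But the step you yourself flag as ``the main obstacle'' is a genuine gap, not a technicality, and the tool you propose for it does not supply what you need. Ilmanen's interpolation theorem (Theorem~\ref{app_thm}) is a purely spatial, fixed-time statement: given two disjoint closed sets it produces a $C^1$ hypersurface separating them \emph{at that one time}. It does not produce a one-parameter family $s\in[0,t]\mapsto K(s)$ of smooth compact barriers that is a strict subsolution and stays squeezed between the two weak set flows on the whole interval. Nor can you obtain such a family by ``slightly shrinking and smoothing'' the level sets $\{x:\dist(x,Y(s))=e^{\lambda s}\eta'\}$: besides being non-smooth in general, verifying that any smoothed version of them moves as a subsolution requires pointwise control of how $\dist(\cdot,Y(s))$ evolves in time, which is essentially the monotonicity you are trying to prove --- the weak set flow property gives no such control, so this construction begs the question. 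The remaining natural candidate, flowing the time-$0$ interpolating hypersurface by mean curvature for the whole interval, fails because that flow may become singular well before the terminal time.

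What is missing from your plan is precisely the paper's continuity (open/closed) argument in time, which makes a \emph{short-time} version of your idea suffice. One shows that the set $\mathcal{T}$ of times up to which \eqref{key-inequality} holds is closed (using Corollary~\ref{shrinking-ball-corollary}) and open to the right: at a time $\tau\in\mathcal{T}$ one inserts, via Theorem~\ref{app_thm}, a $C^1$ hypersurface $M$ at distance exactly $\tfrac12 e^{\lambda\tau}\eta$ from $Y(\tau)$ and from $J=\{p:\dist(p,Y(\tau))\ge e^{\lambda\tau}\eta\}$, flows it by genuine smooth mean curvature flow for a short time $\eps$ using the local regularity theorem (smooth for $s>\tau$, only $C^1$ at $s=\tau$ --- this is why Remark~\ref{key-remark}, which allows a smooth MCF in place of a subsolution barrier and a non-smooth initial slice, is invoked rather than Corollary~\ref{key-corollary} verbatim), applies the two one-sided estimates on $[\tau,\tau+\eps]$, and adds them using the fact that $M(s)$ still separates $Y(s)$ and $Z(s)$; advancing $\tau$ and re-inserting a fresh hypersurface is how the singularity problem is avoided. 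Two smaller points: what you call the triangle inequality is really this separation inequality $\dist(Y,Z)\ge\dist(Y,M)+\dist(M,Z)$, which needs the persistence of separation on the short interval; and since Proposition~\ref{key-proposition} requires a \emph{strict} Ricci lower bound, the case where $\lambda$ is only a non-strict bound needs a final limiting step $\lambda'\uparrow\lambda$.
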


\begin{proof}
{\bf Case 1}: $\lambda$ is a strict lower bound for Ricci curvature on $Y^\eta_\lambda$.
Let $\mathcal{T}$ be the set of times $\tau\in [0,T]$ such that~\eqref{key-inequality} holds for all $t\in [0,\tau]$.
By hypothesis, $0\in \mathcal{T}$.  
Thus $\mathcal{T}$ is either $[0,b]$ or $[0,b)$ where $b=\sup\mathcal{T}$.
Let $y\in Y(b)$ and $z\in Z(b)$.  Then for $t<b$,
\begin{align*}
e^{\lambda t}\eta
&< \dist(Y(t),Z(t))   \\
&\le \dist(Y(t),y) + \dist(y,z) + \dist(z, Z(t)).
\end{align*}
Taking the limit as $t\uparrow b$ gives (see Corollary~\ref{shrinking-ball-corollary})
\[
e^{\lambda b}\eta \le \dist(y,z).
\]
Taking the infimum over $y\in Y(b)$ and $z\in Z(b)$ gives $e^{\lambda b}\eta \le \dist(Y(b),Z(b))$.
Thus $\mathcal{T}=[0,b]$.

Hence it suffices to show that if $\tau<T$ is in $\mathcal{T}$, then $\tau+\eps\in \mathcal{T}$ for some $\eps>0$.
Consider such a time $\tau$.  Let
\[
   J=\{p\in N: \dist(p, Y(\tau)) \ge e^{\lambda \tau}\eta\}.
\]
Since $\tau\in \mathcal{T}$, we see that $Z(\tau)\subset J$.
By Theorem \ref{app_thm}, there exists a closed $C^1$ hypersurface $M$ in $N$ such that $M$ separates $Y(\tau)$ and $J$
and such that
\[
     \dist(Y(\tau),M) = \dist(M, J) = \frac12\dist(Y(\tau),J) = \frac12 e^{\lambda\tau}\eta.
\]
Existence of such a hypersurface that is $C^{1,1}$ was sketched in~\cite{Ilm_mani_surv}*{Lemma~4G} and proved in~\cite{Bernard_patrick}. (See also~\cite{Fathi_insertion}.) 
Since existence of such an $M$ that is merely $C^1$ 
suffices for our application and is simpler to prove, we provide an existence proof in the appendix.

By the Local Regularity Theorem~\cite{White_reg}, there exists a smooth mean curvature flow 
\[
  t\in (\tau, \tau+\eps] \mapsto M(t)
\]
such that $M(t)$ converges in $C^1$ to $M$ as $t\to\tau$.  Accordingly, we set $M(\tau)=M$.

Let 
\[
\tM  := \cup_{t\in [\tau,\tau+\eps]}\{p: \dist(p,M(t)) \le \frac12e^t\eta  \}.
\]
By replacing $\eps$ by a smaller $\eps>0$, we can assume that $[\tau,\tau+\eps]\subset [0,T]$,
that $\tM$ is compact, and that $\lambda$ is a strict lower bound for Ricci curvature of $N$
on $\tM$.

Consequently, 
\begin{align*}
 \dist(Y(t),M(t)) 
 &\ge e^{\lambda (t-\tau)}\dist(Y(\tau),M(\tau)) \\
 &= e^{\lambda (t-\tau)}\frac12 e^{\lambda \tau} \eta  \\
 &= \frac12 e^{\lambda t}\eta
\end{align*}
and
\begin{align*}
\dist(Z(t),M(t)) 
&\ge e^{\lambda (t-\tau)}\dist(Z(\tau),M(\tau)) \\
&\ge e^{\lambda (t-\tau)}\dist(J,M) \\
&= e^{\lambda(t-\tau)} \frac12 e^{\lambda \tau}\eta \\
&= \frac12 e^{\lambda t}\eta.
\end{align*}
for $t\in [\tau,\tau+\eps]$ by Corollary~\ref{key-corollary} and Remark~\ref{key-remark}, with the time
interval $[\tau,\tau+\eps]$ in place of $[0,T]$.
Since $M(t)$ separates $Y(t)$ and $Z(t)$,
\begin{align*}
\dist(Y(t),Z(t))
&\ge
\dist(Y(t),M(t)) + \dist(M(t),Z(t)) \\
&\ge
e^{\lambda t}\eta
\end{align*}
for $t\in [\tau,\tau+\eps]$.  
Thus $[\tau,\tau+\eps]\subset \mathcal{T}$.  This proves the theorem in Case 1.

{\bf Case 2}: $\lambda$ is any lower bound for Ricci curvature on $Y^\eta_\lambda$.
Taking any $\lambda'<\lambda$, then $Y^\eta_{\lambda'}\subset Y^\eta_\lambda$, and thus $\lambda'$
is a strict lower bound for Ricci curvature on $Y^\eta_{\lambda'}$.  Thus by Case 1,
\[
     \dist(Y(t),Z(t)) \ge e^{\lambda' t} \eta 
\]
for all $t\in [0,T]$.   Since this inequality holds for every $\lambda'<\lambda$, it also holds for $\lambda'=\lambda$.
\end{proof}

\begin{theorem}\label{long-time-theorem}
Suppose $N$ is a complete Riemannian manifold with Ricci curvature bounded below by $\lambda$.
Suppose $t\in [0,\infty)\mapsto Y(t)$ and $t\in [0,\infty)\mapsto Z(t)$ are weak set flows with 
$Y(0)$ compact.  Then
\[
   t\in [0,T] \mapsto e^{-\lambda t} \dist(Y(t),Z(t))
\]
is non-decreasing.
\end{theorem}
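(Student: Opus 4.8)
The plan is to deduce the monotonicity from the distance comparison Theorem~\ref{distance-theorem}, using the compactness Theorem~\ref{compact-theorem} to verify its compactness hypothesis and a time translation to recast the assertion as one ``starting at time~$0$''. The translation is harmless because mean curvature flow is autonomous: if $W$ is a weak set flow with starting time $T_0$ and $s\ge 0$, then $t\mapsto W(t+s)$ is a weak set flow with starting time $\max\{T_0-s,0\}$, since for any smooth compact barrier $K$ the quantity $\Phi_K(p,t)$ depends only on the local geometry of $\partial K(t)$ near $p$ and not explicitly on~$t$.

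It suffices to show that for all $0\le t_1<t_2<\infty$,
\[
  e^{-\lambda t_1}\dist(Y(t_1),Z(t_1))\ \le\ e^{-\lambda t_2}\dist(Y(t_2),Z(t_2)).
\]
If the left-hand side vanishes there is nothing to prove, so I may assume $\eta:=\dist(Y(t_1),Z(t_1))>0$. Set $Y'(t):=Y(t+t_1)$ and $Z'(t):=Z(t+t_1)$ for $t\in[0,T']$, where $T':=t_2-t_1$; by the time-translation invariance noted above, these are weak set flows on $[0,T']$.

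Next I would check that Theorem~\ref{distance-theorem} applies to $Y'$ and $Z'$ on $[0,T']$ with this $\eta$ and with the given $\lambda$. Since $Y(0)$ is compact and $N$ is complete with Ricci curvature bounded below by $\lambda$, Theorem~\ref{compact-theorem} shows that $\cup_{s\le t_2}Y(s)$ is compact; hence $\cup_{t\le T'}Y'(t)=\cup_{s\in[t_1,t_2]}Y(s)$ is compact. Because $e^{\lambda t}\eta$ is bounded on $[0,T']$, the set $(Y')^{\eta}_{\lambda}=\cup_{t\in[0,T']}\{p:\dist(p,Y'(t))\le e^{\lambda t}\eta\}$ lies in a bounded metric neighborhood of this compact set, hence has compact closure by the Hopf--Rinow theorem; and it is closed (being, via closedness of $Z'$ in spacetime, a union over the compact interval $[0,T']$ of sublevel sets of $p\mapsto\dist(p,Y'(t))$), so it is compact. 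Moreover $\lambda$ bounds the Ricci curvature of $N$ from below everywhere, in particular on $(Y')^{\eta}_{\lambda}$. Finally $\dist(Y'(0),Z'(0))=\eta=e^{\lambda\cdot 0}\eta$, so Theorem~\ref{distance-theorem} gives $\dist(Y'(t),Z'(t))\ge e^{\lambda t}\eta$ for all $t\in[0,T']$. Evaluating at $t=T'=t_2-t_1$ yields $\dist(Y(t_2),Z(t_2))\ge e^{\lambda(t_2-t_1)}\dist(Y(t_1),Z(t_1))$, which is the desired inequality.

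There is no genuine analytic obstacle once Theorem~\ref{distance-theorem} is in hand; the only points requiring care are bookkeeping ones — verifying that weak set flows are invariant under time translation, and verifying compactness of $(Y')^{\eta}_{\lambda}$, which is exactly where completeness and the lower Ricci bound re-enter, through Theorem~\ref{compact-theorem} and Hopf--Rinow.
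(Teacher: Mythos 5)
Your proof is correct and follows essentially the same route as the paper: apply Theorem~\ref{distance-theorem} with compactness supplied by Theorem~\ref{compact-theorem}, and handle general $t_1<t_2$ by time-translating the flows (the paper compresses this last step into ``the same argument shows''), your extra verifications of the compactness of $(Y')^{\eta}_{\lambda}$ and of translation invariance being just the bookkeeping the paper leaves implicit. (Only a trivial slip: the closedness of that set uses the spacetime closedness of $Y'$, not of $Z'$.)
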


\begin{proof}
Let $0<T<\infty$.
By Theorem~\ref{compact-theorem}, $\cup_{t\le T}Y(t)$ is a compact subset of $N\times [0,T]$.
By Theorem~\ref{distance-theorem}, 
\begin{equation}\label{starbucks}
     \dist(Y(t),Z(t)) \ge e^{\lambda t} \dist(Y(0),Z(0)).
\end{equation}
for $t\le T$.  
(Note that the hypotheses of Theorem~\ref{distance-theorem} are satisfied for every $\eta>0$.)
Since $T$ is arbitrary,~\eqref{starbucks} holds for all $t\ge 0$.  
The same argument shows that 
\[
    \dist(Y(\tau+t),Z(\tau+t)) \ge e^{\lambda t}\dist(Y(\tau),Z(\tau))
\]
for $0\le \tau < t$.  
\end{proof}


\section{The Avoidance Theorem}\label{avoidance-section}

\begin{theorem}\label{avoidance-theorem}
Let $t\in [0,T]\mapsto Y(t)$ and $t\in [0,T]\mapsto Z(t)$ be weak set flows in $N$
such that $C:=\cup_{t\in [0,T]}Y(t)$ is compact.
If $Y(t)$ and $Z(t)$ are disjoint at time $0$, then they are disjoint at every time $t\in [0,T]$.
\end{theorem}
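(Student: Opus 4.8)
The plan is to deduce the theorem from the Distance Theorem (Theorem~\ref{distance-theorem}): I will produce, from the compactness of $C$ and the hypothesis $Y(0)\cap Z(0)=\emptyset$, values of $\eta>0$ and $\lambda\in\RR$ to which Theorem~\ref{distance-theorem} applies, and whose conclusion $\dist(Z(t),Y(t))\ge e^{\lambda t}\eta$ immediately gives disjointness for all $t$. First I would record two elementary facts. Since $Y$ is a closed subset of $N\times[0,T]$ contained in the compact set $C\times[0,T]$, the set $Y$ is itself compact; and each $Y(t)$, being a closed subset of the compact set $C$, is compact. In particular $Y(0)$ is compact, and since $Y(0)$ is disjoint from the closed set $Z(0)$, the number $d_0:=\dist(Y(0),Z(0))$ is positive (with the value $+\infty$ if $Y(0)$ or $Z(0)$ is empty, in which case the argument below goes through with $d_0$ replaced by any positive number).

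Next I would choose the parameters. Using compactness of $C$, cover it by finitely many geodesic balls $B(x_i,r_i)$ with each $\overline B(x_i,2r_i)$ compact, and set $\eta_0:=\min_i r_i>0$; then the closed set $C^{\eta_0}:=\{p:\dist(p,C)\le\eta_0\}$ lies in $\bigcup_i\overline B(x_i,2r_i)$ and is therefore compact. Let $\lambda$ be a lower bound for the Ricci curvature of $N$ on $C^{\eta_0}$ (such a bound exists by compactness), and replace it by its minimum with $0$ so that $\lambda\le 0$. Finally put $\eta:=\min\{\eta_0,d_0\}>0$.

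It then remains to check the hypotheses of Theorem~\ref{distance-theorem}. Because $\lambda\le 0$ and $t\ge 0$ we have $e^{\lambda t}\eta\le\eta\le\eta_0$, and since $Y(t)\subset C$ this gives $\{p:\dist(p,Y(t))\le e^{\lambda t}\eta\}\subset C^{\eta_0}$ for each $t$; hence $Y^\eta_\lambda\subset C^{\eta_0}$. The set $Y^\eta_\lambda$ is also closed: it equals $\{p:\min_{(y,t)\in Y}(\dist(p,y)-e^{\lambda t}\eta)\le 0\}$, and the displayed minimand is a continuous function of $p$ because $Y$ is compact. So $Y^\eta_\lambda$ is compact, and $\lambda$ bounds its Ricci curvature from below. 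Moreover $\dist(Z(0),Y(0))=d_0\ge\eta=e^{\lambda\cdot 0}\eta$. Theorem~\ref{distance-theorem} now yields $\dist(Z(t),Y(t))\ge e^{\lambda t}\eta>0$ for all $t\in[0,T]$, so $Y(t)$ and $Z(t)$ are disjoint throughout. The one point needing genuine care — the main obstacle — is the construction of the compact neighborhood $C^{\eta_0}$: in a general, possibly incomplete, Riemannian manifold a closed metric neighborhood of a compact set need not be compact, but compactness of $C$ supplies a uniform $\eta_0>0$ for which it is, via a finite cover by relatively compact geodesic balls. Everything else is bookkeeping feeding the previously established Theorem~\ref{distance-theorem}.
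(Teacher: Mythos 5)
Your proof is correct and takes essentially the same route as the paper: both reduce the theorem to Theorem~\ref{distance-theorem} by enclosing $C$ in a compact neighborhood, taking a Ricci lower bound $\lambda$ there, and choosing $\eta \le \dist(Y(0),Z(0))$ small enough that $Y^\eta_\lambda$ stays inside that neighborhood. Your additional details (the finite cover by relatively compact geodesic balls and the normalization $\lambda\le 0$) merely make explicit what the paper's choice of an open set $U\supset C$ with $\overline{U}$ compact leaves implicit.
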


\begin{proof}
Let $U$ be an open set containing $C$ with $\overline{U}$ compact.
Let $\lambda$ be a lower bound for Ricci curvature on $U$, and choose $\eta$
with $0<\eta<\dist(Y(0),Z(0))$ 
 sufficiently small that
\[
    \cup_{t\in [0,T]}\{x: \dist(x, Y(t)) \le e^{\lambda t}\eta\}
\]
lies in $U$.  Then $\dist(Y(t),Z(t)) \ge e^{\lambda t}\eta$ for all $t\in [0,T]$
by Theorem~\ref{distance-theorem}.
\end{proof}


\section{Equivalent Definitions of Weak Set Flow}\label{equivalent-definitions-section}

A {\bf strong mean-curvature-flow barrier}, or {\bf strong barrier} for short, is
a smooth compact barrier $t\in [a,b]\mapsto K(t)$ such that $\Phi_K(x,t)<0$
for all $(x,t)$ with $x\in \partial K(t)$ and $t\in [a,b]$.  (Recall that the barrier $K$ is said to be compact
if $\cup_t K(t)$ is compact, or, equivalently, if $K$ is a compact subset of $N\times\RR$.)

\begin{theorem}\label{equivalent-definitions-theorem}
Let $Z$ be a closed subset of $N\times[T_0,\infty)$.
The following are equivalent:
\begin{enumerate}[\upshape (1)]
\item\label{equivalent-1} $Z$ is a weak set flow (as in Definition~\ref{weak-set-flow-definition}) with starting time $T_0$.
\item\label{equivalent-2} If $t\in [a,b]\mapsto K(t)$ is a strong barrier with $a\ge T_0$ and
if $K(a)$ is disjoint from $Z(a)$, then $K(t)$ is disjoint from $Z(t)$ for all $t\in [a,b]$.
\item\label{equivalent-3} If $t\in [a,b]\mapsto K(t)$ is a strong barrier with $a\ge T_0$ and
if $K(a)$ is disjoint from $Z(a)$, then $K(t)$ is disjoint from $Z(t)$ for all $t\in [a,b)$.
\item\label{equivalent-4}  If $t\in [a,b]\mapsto M(t)$ is a smooth mean curvature flow
 of closed, embedded, hypersurfaces
with $a\ge T_0$, and if $M(0)$ is disjoint from $Z(0)$, then $M(t)$ is disjoint from $Z(t)$ for all $t\in [a,b]$.
\end{enumerate}
\end{theorem}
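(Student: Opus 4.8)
The plan is to establish the implications $(\ref{equivalent-1}) \Rightarrow (\ref{equivalent-3}) \Rightarrow (\ref{equivalent-2}) \Rightarrow (\ref{equivalent-4}) \Rightarrow (\ref{equivalent-1})$, so that all four statements are equivalent. Several of these are routine; the substantive content is the step $(\ref{equivalent-3}) \Rightarrow (\ref{equivalent-2})$, which amounts to upgrading ``disjoint on $[a,b)$'' to ``disjoint on $[a,b]$'', and the step $(\ref{equivalent-4}) \Rightarrow (\ref{equivalent-1})$, which recovers the infinitesimal definition from a statement about genuine mean curvature flows.

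For $(\ref{equivalent-1}) \Rightarrow (\ref{equivalent-3})$: suppose $K$ is a strong barrier with $K(a) \cap Z(a) = \emptyset$ but $K(t) \cap Z(t) \ne \emptyset$ for some $t \in (a,b)$; let $b'$ be the first such time, so $a < b'$, the restriction $t \in [a,b'] \mapsto K(t)$ is a smooth compact barrier, $K(t)$ is disjoint from $Z(t)$ for $t \in [a,b')$, and there is $p \in K(b') \cap Z(b')$. By Definition~\ref{weak-set-flow-definition}, $p \in \partial K(b')$ and $\Phi_K(p,b') \ge 0$, contradicting that $K$ is a strong barrier (where $\Phi_K < 0$ everywhere on the boundary). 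For $(\ref{equivalent-3}) \Rightarrow (\ref{equivalent-2})$: given a strong barrier $K$ on $[a,b]$ with $K(a) \cap Z(a) = \emptyset$, apply $(\ref{equivalent-3})$ on each subinterval $[a,b']$ with $b' < b$ to get disjointness on $[a,b)$; to get disjointness at the endpoint $b$, enlarge the barrier slightly in time — extend $K$ to a strong barrier on $[a, b+\sigma]$ for small $\sigma > 0$ (possible by the short-time existence of mean curvature flow started from the smooth hypersurface $\partial K(b)$, or by any smooth extension keeping $\Phi < 0$ by continuity), and apply disjointness on $[a, b+\sigma)$, which includes $t = b$. For $(\ref{equivalent-2}) \Rightarrow (\ref{equivalent-4})$: a smooth mean curvature flow $t \mapsto M(t)$ of closed hypersurfaces is the boundary of a smooth compact family of regions; it is not itself a strong barrier (it has $\Phi = 0$), but one perturbs it — either by the shrinking-sphere trick of Theorem~\ref{shrinking-ball-theorem}, or more simply by replacing the region $K(t)$ bounded by $M(t)$ with a slightly shrunken region $K_s(t)$ (flow $M(t)$ inward, or take the $e^{-st}\varepsilon$-sublevel set of the signed distance), obtaining strong barriers $K_s$ with $K_s(a) \cap Z(a) = \emptyset$ and $\cup_s K_s(t) = K(t) \setminus M(t)$, then let $s \downarrow 0$.

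The final and most delicate step is $(\ref{equivalent-4}) \Rightarrow (\ref{equivalent-1})$. Here I am given a smooth compact barrier $t \in [a,b] \mapsto K(t)$ with $K(t) \cap Z(t) = \emptyset$ for $t < b$ and $p \in K(b) \cap Z(b)$, and I must show $p \in \partial K(b)$ and $\Phi_K(p,b) \ge 0$. That $p \in \partial K(b)$ follows because $Z(b)$ is a limit (via Corollary~\ref{shrinking-ball-corollary}, applied to the distance function to $\partial K$) of points in $Z(t)$ for $t \uparrow b$, each of which lies outside $K(t)$, so $p$ cannot be in the interior of $K(b)$. For the inequality $\Phi_K(p,b) \ge 0$, suppose instead $\Phi_K(p,b) < 0$; I expect this to be the main obstacle, and the natural tool is Theorem~\ref{barrier-modification-theorem}, which replaces $K$ near $(p,b)$ by a barrier $\Kup$ with $\Kup(t) \subset \interior K(t)$ for $t < b$, with $\Kup(b) \cap \partial K(b) = \{p\}$, with quadratic separation, and crucially with $\Phi_{\Kup} < 0$ everywhere on $\partial \Kup$, i.e. $\Kup$ is a strong barrier near time $b$. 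Then $\Kup(t)$ is disjoint from $Z(t)$ for $t < b$ but meets $Z(t)$ at $t = b$. To derive a contradiction from $(\ref{equivalent-4})$ I need an actual mean curvature flow, not just a strong barrier; so I would run mean curvature flow starting from the smooth hypersurface $\partial \Kup(b-\varepsilon)$ for small $\varepsilon$ — it exists smoothly on a short time interval and, for $\varepsilon$ small, lies strictly inside $\Kup$ hence inside $K$ and disjoint from $Z$ at its initial time, while the quadratic-separation and $\Phi < 0$ properties force it to still meet $Z$ at some time $\le b$ — contradicting $(\ref{equivalent-4})$. (Alternatively, one may use the already-established $(\ref{equivalent-4}) \Rightarrow (\ref{equivalent-2})$-type reasoning: strong barriers can be approximated from inside by genuine mean curvature flows, so $(\ref{equivalent-4})$ implies $(\ref{equivalent-2})$ directly, and then one argues with $\Kup$ as a strong barrier.) The bookkeeping of which time interval the flow survives on, and ensuring the contact persists, is the part requiring care, and the quadratic-separation conclusion $(\ref{lemma-claim-4})$ of Theorem~\ref{barrier-modification-theorem} is exactly what makes the perturbed flow still hit $Z$.
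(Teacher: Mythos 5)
Your steps (1)$\Rightarrow$(3) and (3)$\Rightarrow$(2) are fine and match the paper, but two of your implications have genuine gaps. The step \eqref{equivalent-2}$\Rightarrow$\eqref{equivalent-4} as written fails: in \eqref{equivalent-4} the set $Z(a)$ is only assumed disjoint from the \emph{hypersurface} $M(a)$, not from the region it bounds (and in a general manifold $M(t)$ need not bound a region at all). If $Z(a)$ has points inside the region $K(a)$ bounded by $M(a)$ --- or on both sides of $M(a)$ --- then your shrunken regions $K_s(a)$ are not disjoint from $Z(a)$, so \eqref{equivalent-2} cannot be applied to them. Moreover, even when $Z(a)$ lies entirely outside $K(a)$, letting $s\downarrow 0$ only shows that $Z(t)$ avoids $\cup_s K_s(t)=K(t)\setminus M(t)$, i.e.\ the open region; it does not exclude $Z(t)$ touching $M(t)$ itself, which is exactly the conclusion you need. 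A correct elementary repair is a \emph{two-sided} barrier: for $C$ large compared with $|A|^2$ and the Ricci curvature along the flow, the shrinking tube $t\mapsto\{x:\dist(x,M(t))\le \delta_0 e^{-Ct}\}$ is a strong barrier containing $M(t)$ in its interior, and it is disjoint from $Z(a)$ once $\delta_0<\dist(M(a),Z(a))$; then \eqref{equivalent-2} gives \eqref{equivalent-4}. (The paper instead gets \eqref{equivalent-1}$\Rightarrow$\eqref{equivalent-4} by citing the Avoidance Theorem~\ref{avoidance-theorem}, since $M$ is itself a compact weak set flow.)

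Your \eqref{equivalent-4}$\Rightarrow$\eqref{equivalent-1} is also incomplete at two points. First, invoking Corollary~\ref{shrinking-ball-corollary} to conclude $p\in\partial K(b)$ is circular: that corollary is proved for weak set flows, and its proof uses strong barriers (shrinking balls with $\Phi<0$), which are not mean curvature flows; at this stage you only know \eqref{equivalent-4}. The paper faces the analogous issue in its \eqref{equivalent-2}$\Rightarrow$\eqref{equivalent-1} step and disposes of it with a footnote (the corollary's proof uses only strong barriers), but to argue from \eqref{equivalent-4} one needs genuine shrinking MCF geodesic spheres, which is precisely what Lemma~\ref{point-hitting-lemma} supplies. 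Second, the final contradiction is asserted rather than proved, and the mechanism you name is the wrong one: the quadratic-separation conclusion of Theorem~\ref{barrier-modification-theorem} plays no role here. What you actually need is (i) that the mean curvature flow started from $\partial\Kup(b-\varepsilon)$ \emph{encloses} $\Kup(t)$ on $[b-\varepsilon,b]$ (comparison of the smooth flow with the strict subsolution $\Kup$), so that the enclosed region contains $p\in Z(b)$ at time $b$ while being disjoint from $Z$ at time $b-\varepsilon$, and (ii) that the first contact of this enclosed region with $Z$ occurs on the moving hypersurface --- again Lemma~\ref{point-hitting-lemma}, proved from \eqref{equivalent-4}. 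With those two ingredients supplied, your argument becomes essentially the paper's route, which proves \eqref{equivalent-4}$\Rightarrow$\eqref{equivalent-2} by flowing the boundary of an offending strong barrier and then \eqref{equivalent-2}$\Rightarrow$\eqref{equivalent-1} by the shrinking-ball corollary plus the Barrier Modification Theorem.
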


\begin{proof}
Trivially~\eqref{equivalent-1} implies~\eqref{equivalent-2}.

To see that~\eqref{equivalent-2} implies \eqref{equivalent-1}, suppose to the contrary that \eqref{equivalent-2} holds but that~\eqref{equivalent-1} fails.  
Then there is a smooth 
compact barrier $[a,b]\mapsto K(t)$ such that $Z(t)\cap K(t)$ is empty for $t\in [a,b)$
and such that $K(t)\cap Z(t)$ contains a point $p$ such that
\begin{equation*}\label{dichotomy}
\begin{aligned}
&\text{$p\in \interior(K(b))$, or} \\
&\text{$p\in \partial K(b)$ and $\Phi_K(b)<0$.}
\end{aligned}
\end{equation*}
By Corollary~\ref{shrinking-ball-corollary}
(applied to the function $u(x,t)=\dist(x, N\setminus K(t))$)\footnote{Corollary~\ref{shrinking-ball-corollary} was proved for weak set flows.
But Corollary~\ref{shrinking-ball-corollary}
is based on Theorem~\ref{shrinking-ball-theorem}, and the only barriers 
in the proof of that result were strong barriers.  Thus Theorem~\ref{shrinking-ball-theorem}
and Corollary~\ref{shrinking-ball-corollary} also hold for flows having Property~\eqref{equivalent-2}.}  
     $p$ must be in $\partial K(b)$, and so $\Phi_K(b)<0$.
By the Barrier Modification Theorem~\ref{barrier-modification-theorem},
 there is a strong barrier $t\in [b-\eps,b]\mapsto \Kup(t)$
such that $\Kup(t)\subset K(t)$ for all $t\in [b-\eps,b]$ and such that $p\in \partial \Kup(b)$.
But that violates~\eqref{equivalent-2}.

Thus we have proved that \eqref{equivalent-1} and~\eqref{equivalent-2} are equivalent.

Trivially~\eqref{equivalent-2} implies~\eqref{equivalent-3}.  The reverse implication holds
because any strong barrier $t\in [a,b]\mapsto K(t)$ can be prolonged to a strong
 barrier on a slightly larger time interval $[a,b+\eps]$.
 
 The Avoidance Theorem~\ref{avoidance-theorem} shows that~\eqref{equivalent-1} implies~\eqref{equivalent-4} 
 (since any smooth mean curvature flow is a weak set flow). 

It remains only to show that~\eqref{equivalent-4} implies~\eqref{equivalent-2},
or, equivalently, that failure of~\eqref{equivalent-2} implies failure of~\eqref{equivalent-4}.
Thus suppose that~\eqref{equivalent-2} does not hold, i.e., that
that there is a strong barrier $t\in [a,b]\mapsto K(t)$ with $a\ge T_0$ such that
$K(a)$ is disjoint from $Z(a)$
 but $K(t)\cap Z(t)$ is nonempty for some time $t\in (a,b]$.  
 By relabeling, we may assume that $b$ is the first such time.
 
 By replacing $a$ by an $a'<b$ sufficiently close to $b$, we may suppose that the mean curvature
 flow starting from $\partial K(a)$ remains smooth and compact
 for time at least $b-a$.  For $t\in [a,b]$, let $M(t)$ be the result of flowing $M(0)=\partial K(a)$
 for time $t-a$.
 Let $\Kup(t)$ be the closed region bounded by $M(t)$ such that
\[
  K(t)\subset \Kup(t).
\]
Since $K(b)\cap Z(b)$ is nonempty and since $K(b)\subset \Kup(b)$, we see that
$\Kup(b)\cap Z(b)$ is nonempty.  By Lemma~\ref{point-hitting-lemma} below, 
the first contact of $\Kup(t)$ and $Z(t)$
occurs at a point in $\partial \Kup(t)$, that is, a point in $M(t)$.
Thus $t\in [a,b]\mapsto M(t)$ is a smooth mean curvature flow that is disjoint from $Z$
at time $a$ but not at some later time, which violates~\eqref{equivalent-4}.
\end{proof}

\begin{lemma}\label{point-hitting-lemma}
Suppose that $U$ is an open subset of $N$ and that $p\in U$.
\begin{enumerate}[\upshape (1)]
\item For all sufficiently small $\eps>0$, there is a mean curvature flow
\[
t\in [0,\eps]\mapsto M(t)
\]
 of smoothly embedded, closed hypersurfaces in $U$
such that $p\in M(\eps)$.
\item Suppose $Z$ has Property~\eqref{equivalent-4} in
   Theorem~\ref{equivalent-definitions-theorem}. 
    If $T>0$ and if $p\in Z(T)$, then $Z(t)\cap U$ is nonempty
for all $t\le T$ sufficiently close to $T$.
\end{enumerate}
\end{lemma}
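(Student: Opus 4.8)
For part (1), the plan is to exhibit an explicit small mean curvature flow passing through $p$ at the final time. Work in a geodesic normal coordinate chart centered at $p$, so that $p$ corresponds to the origin and the metric is Euclidean to first order. For small $\eps > 0$, consider a round shrinking sphere in those coordinates: for $t \in [0,\eps]$, let $M(t)$ be the geodesic sphere of radius $\rho(t)$ centered at a fixed point $q$ near $p$, where $\rho$ is chosen so that $\rho(\eps) = \dist(q,p)$ and $\rho(0)$ is small but positive. This is automatically a smooth barrier by Definition~\ref{barrier-definition}; to make it a genuine mean curvature flow one instead invokes short-time existence for smooth compact initial hypersurfaces in $N$. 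Concretely: fix a small geodesic sphere $S$ inside $U$ that passes through $p$, run mean curvature flow backward from $S$ for a short time $\eps$ (equivalently, run the flow forward from a slightly larger sphere); by the avoidance/comparison with shrinking balls in Theorem~\ref{shrinking-ball-theorem} the flow stays inside $U$ and remains smooth on $[0,\eps]$ for $\eps$ small, and at time $\eps$ it is $S$, which contains $p$. The one point requiring a little care is arranging that the flow exists for the full interval $[0,\eps]$ and stays in $U$; this follows because the surface is contained between two shrinking geodesic balls of controlled radii, so it cannot develop singularities or leave $U$ before the target time if $\eps$ is small enough.

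For part (2), the strategy is contrapositive combined with part (1). Suppose $p \in Z(T)$ but the conclusion fails, so there is a sequence $t_j \uparrow T$ with $Z(t_j) \cap U = \emptyset$; by closedness of $Z$ and the fact that $Z(t) \cap U = \emptyset$ persists on a relatively open set of times (or just by passing to a subinterval), we may assume $Z(t) \cap U = \emptyset$ for all $t$ in some interval $[T-\sigma, T)$. Apply part (1) with a possibly smaller $\eps \le \sigma$ to get a smooth mean curvature flow $t \in [0,\eps] \mapsto M(t)$ of closed embedded hypersurfaces in $U$ with $p \in M(\eps)$. Reparametrize it to live on the time interval $[T-\eps, T]$, i.e. set $\widetilde M(t) = M(t - (T-\eps))$. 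Then $\widetilde M(t) \subset U$ for all $t \in [T-\eps,T]$, so $\widetilde M(t)$ is disjoint from $Z(t)$ for all $t \in [T-\eps, T)$, while at time $T$ the point $p$ lies in both $\widetilde M(T) = M(\eps)$ and $Z(T)$. This directly contradicts Property~\eqref{equivalent-4}: a smooth mean curvature flow of closed embedded hypersurfaces that is disjoint from $Z$ at the initial time $T-\eps$ but not at the later time $T$.

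The main obstacle is part (1) — specifically, guaranteeing short-time existence of the mean curvature flow that stays inside $U$ for the whole interval $[0,\eps]$, rather than merely producing a formal barrier. The cleanest way around this is exactly the route indicated above: take a fixed small geodesic sphere through $p$ contained well inside $U$, flow it by mean curvature, and note that on a short time interval the evolving hypersurface is trapped between two shrinking geodesic spheres (by comparison, using that geodesic spheres of radius $r$ have mean curvature $-(m/r) + o(r)$, as in the proof of Theorem~\ref{shrinking-ball-theorem}), hence remains smooth, embedded, and inside $U$; running this in reverse time from the fixed sphere gives the desired family ending at a hypersurface through $p$. Part (2) is then a routine contrapositive application of this together with the hypothesis, requiring only the observation that ``$Z(t) \cap U = \emptyset$ near $T$'' can be arranged from the negation of the conclusion, and a harmless reparametrization of the time interval.
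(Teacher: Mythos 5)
Your part (2) follows essentially the paper's route: apply the contrapositive of Property~\eqref{equivalent-4} of Theorem~\ref{equivalent-definitions-theorem} to the flow from part (1), reparametrized to end at time $T$. One small flaw there: from the negation of the conclusion you only get a sequence $t_j\uparrow T$ with $Z(t_j)\cap U=\emptyset$, and your claim that one ``may assume'' $Z(t)\cap U=\emptyset$ on a whole interval $[T-\sigma,T)$ is unjustified (that set of times need not contain an interval). But this is harmless: disjointness is only needed at the single initial time of the comparison flow, so take $\eps=T-t_j$ for some large $j$ and apply part (1) with exactly that $\eps$.

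The genuine gap is in part (1). You cannot ``run mean curvature flow backward from $S$'': backward mean curvature flow is an ill-posed initial value problem, and for a smooth (non-analytic) hypersurface such as a geodesic sphere in a general Riemannian manifold there is in general no backward solution. The substitute you offer --- ``equivalently, run the flow forward from a slightly larger sphere'' --- is not equivalent: the forward evolution of a slightly larger sphere is not a geodesic sphere in a general manifold, and there is no reason its time-$\eps$ slice passes through $p$; your assertion ``at time $\eps$ it is $S$, which contains $p$'' is precisely the point that needs proof and is false as stated. The trapping between two shrinking geodesic balls gives only smooth existence and containment in $U$; it never forces the evolved surface through the specific point $p$. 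The missing idea is a continuity (intermediate-value/sweepout) argument, which is how the paper argues: fix a nearby center $q$ with $\dist(p,q)=R$ small, flow the whole one-parameter family of geodesic spheres $\partial\BB(q,r)$, $R/2\le r\le 2R$, by mean curvature for a short time; for $\eps$ small the evolved spheres $S_{R/2}(\eps)$ and $S_{2R}(\eps)$ still lie on opposite sides of $p$, so by continuous dependence on $r$ there is an $r$ with $p\in S_r(\eps)$, and $M(t):=S_r(t)$ is the desired flow. With that step inserted, your outline for part (1), and hence part (2), goes through.
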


The second assertion implies that at the first time $Z(\cdot)$ bumps
into a smooth compact barrier, the contact occurs only at the boundary 
of the barrier.

\begin{proof}
Let $R>0$ be very small and let $q$ be point with $\dist(p,q)=R$.
Choose $R$ sufficiently small that the geodesic spheres $S_r:=\partial \BB(q,r)$ 
with $R/2 \le r \le 2R$ are smooth and compact and lie in $U$.
Let $\delta>0$ be such that, under mean curvature flow,
 each of those spheres remains smooth and compact and in $U$ during the time 
interval $[0,\delta]$. For $t\in [0,\delta]$, let $S_r(t)$ be the result of flowing $S_r$ for time $t$.
Choose $\eps\in (0,\delta]$ sufficiently small that $q$ lies in the region between $S_{R/2}(\eps)$
and $S_{2R}(\eps)$.  Thus there will be a unique $r\in (R/2,2R)$ such that $q\in S_r(\eps)$.
Now let $M(t)= S_r(t)$ for $t\in [0,\eps]$.  This proves \eqref{equivalent-1}.

To prove~\eqref{equivalent-2}, let $M(\cdot)$ and $\eps$ by as in \eqref{equivalent-1}.
Consider the mean curvature flow
\[
   t\in [T-\eps,T] \mapsto \Sigma(t):= M(t-T).
\]
Since $p\in Z(T)\cap \Sigma(T)$, 
we see that $Z(T-\eps)\cap \Sigma(T-\eps)$ is nonempty since $Z$ has Property~\eqref{equivalent-4}
in Theorem~\ref{equivalent-definitions-theorem}.
Thus $Z(T-\eps)\cap U$ is nonempty since $\Sigma(T-\eps)\subset U$.  
\end{proof}


\section{The Biggest flow}\label{biggest-flow-section}
\begin{theorem}\label{biggest-flow-theorem}
Let $N$ be a smooth Riemannian manifold and let $C$ be a closed subset of $N$. 
There exists a weak set flow $Y$ in $N\times [0,\infty)$, called the \textbf{biggest flow} generated by $C$, 
such that
\begin{enumerate}
\item $Y(0)=C$, and
\item If $Z$ is a weak set flow in $N\times [0,\infty)$
   with $Z(0)\subset C$, then $Z(t)\subset Y(t)$ for all $t\ge 0$.
\end{enumerate} 
\end{theorem}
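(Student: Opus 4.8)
The plan is to construct $Y$ by taking a union over all weak set flows that start inside $C$, and then verify that this union is itself a weak set flow. Concretely, let $\mathcal{Z}$ be the collection of all weak set flows $Z$ in $N\times[0,\infty)$ with $Z(0)\subset C$, and define
\[
  Y := \overline{\bigcup_{Z\in\mathcal{Z}} Z} \subset N\times[0,\infty).
\]
(The collection $\mathcal{Z}$ is nonempty because, for instance, $Z(t)=C$ if $t=0$ and $Z(t)=\emptyset$ if $t>0$ is a weak set flow, by the remark following Definition~\ref{weak-set-flow-definition}.) By construction $Y$ is a closed subset of $N\times[0,\infty)$, and property~(2) will hold automatically once we know $Y$ is a weak set flow, since each $Z\in\mathcal{Z}$ satisfies $Z\subset\bigcup_{Z'\in\mathcal{Z}}Z'\subset Y$, hence $Z(t)\subset Y(t)$ for all $t$. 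For property~(1): clearly $C=Z(0)\subset Y(0)$ for the $Z$ above; for the reverse inclusion $Y(0)\subset C$ note that every $Z\in\mathcal{Z}$ has $Z(0)\subset C$, so $\bigcup_Z Z(0)\subset C$, and taking closures uses that $C$ is closed together with the fact that points of $Y(0)$ are limits of points $(x_i,t_i)\in Z_i$ with $t_i\to 0$ — here one invokes Corollary~\ref{shrinking-ball-corollary}\eqref{ball-corollary-one} applied to each $Z_i$ to push such a limit point back to time $0$, so it lies in $\overline{\bigcup_Z Z(0)}\subset C$.

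The heart of the argument is showing that $Y$ is a weak set flow. Let $t\in[a,b]\mapsto K(t)$ be a smooth compact barrier with $K(t)\cap Y(t)=\emptyset$ for $t\in[a,b)$, and suppose $p\in K(b)\cap Y(b)$. We must show $p\in\partial K(b)$ and $\Phi_K(p,b)\ge 0$. Since $p\in Y(b)$, there is a sequence $(x_i,t_i)\to(p,b)$ with $(x_i,t_i)\in Z_i$ for some $Z_i\in\mathcal{Z}$. The difficulty is that $x_i$ need not lie in $K(t_i)$, so we cannot directly apply the weak-set-flow property of $Z_i$ to the barrier $K$. To handle this I would argue by contradiction: suppose either $p\in\interior K(b)$, or $p\in\partial K(b)$ with $\Phi_K(p,b)<0$. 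In the first case, for $i$ large $x_i\in\interior K(t_i)$ — but then $Z_i(t_i)\cap K(t_i)\neq\emptyset$ with $t_i<b$, contradicting (for the relevant $t_i$) the assumption that $K$ is disjoint from $Y\supset Z_i$ for times before $b$. Wait — this needs care since $t_i$ could equal $b$; but if $t_i=b$ infinitely often we get $x_i\in\interior K(b)\cap Z_i(b)$, and then applying the weak-set-flow property of $Z_i$ to $K$ (with $K$ disjoint from $Z_i$ before time $b$, which holds since $Z_i\subset Y$) forces $x_i\in\partial K(b)$, contradiction. In the second case, $\Phi_K(p,b)<0$, so by the Barrier Modification Theorem~\ref{barrier-modification-theorem} (or more simply by first shrinking via Lemma~\ref{first-modification} and then applying Theorem~\ref{barrier-modification-theorem} with $\eta=0$) there is a strong barrier $t\in[\hat a,b]\mapsto\Kup(t)$ with $\Kup(t)\subset K(t)$ for all $t$, $\Kup(t)\subset\interior K(t)$ for $t<b$, $p\in\partial\Kup(b)$, and $\Phi_{\Kup}(x,t)<0$ everywhere on $\partial\Kup$.

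Now I would use the quadratic-separation conclusion~\eqref{lemma-claim-4} of Theorem~\ref{barrier-modification-theorem}: points of $\partial K(b)$ near $p$ are at distance $\gtrsim\dist(\cdot,p)^2$ from $\Kup(b)$, and similarly $\Kup(t)$ stays strictly inside $K(t)$ for $t<b$. Since $\Kup$ is a strong barrier with $\Kup(a')\cap Z_i(a')=\emptyset$ for some $a'$ close to $b$ (because $\Kup(t)\subset\interior K(t)\subset N\setminus Y(t)\subset N\setminus Z_i(t)$ for $t<b$), the equivalence of~\eqref{equivalent-2} and~\eqref{equivalent-1} in Theorem~\ref{equivalent-definitions-theorem} gives $\Kup(t)\cap Z_i(t)=\emptyset$ for all $t\in[a',b]$, in particular $p\notin Z_i(b)$. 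But $(x_i,t_i)\to(p,b)$ with $(x_i,t_i)\in Z_i$, and since $\Kup$ is closed and $\Kup(b)\ni p$ with $\Kup$ a compact barrier, a limiting argument (using that $\dist$ to the spacetime set $\Kup$ is continuous and $\Kup(t)\cap Z_i(t)=\emptyset$ for $t$ near $b$) yields a contradiction — the points $(x_i,t_i)$ converge to $p\in\Kup(b)$ yet lie outside $\Kup$ at their own times, which is fine, so the real contradiction must come from applying the argument uniformly: actually the cleanest route is that $p\in Y(b)=\overline{\bigcup Z}$ means $p\in Z_{i}(t_{i})$-limit, and since $\Kup(t)$ is disjoint from $Y(t)$ for $t\in[a',b)$ and $p\in\partial\Kup(b)$, the weak-set-flow property we are trying to prove for $Y$ against the barrier $\Kup$ — no. Let me instead conclude directly: the set $\bigcup_i Z_i$ is disjoint from $\Kup$ at times in $[a',b)$ (shown above), so its closure $Y$ satisfies $Y(t)\cap\Kup(t)=\emptyset$ for $t\in[a',b)$ as well (the open condition passes to the closure for each fixed $t<b$ by a shrinking-ball argument à la Corollary~\ref{shrinking-ball-corollary}), hence $p\in Y(b)$ cannot lie in $\Kup(b)$, contradicting $p\in\partial\Kup(b)\subset\Kup(b)$.

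The main obstacle, and the step requiring the most care, is precisely the interaction between the closure operation defining $Y$ and the barrier-avoidance conditions: showing that disjointness of each $Z_i$ from a strong barrier for times $t<b$ is inherited by $Y=\overline{\bigcup Z_i}$ for those same times, and correctly handling limit points $(x_i,t_i)$ whose time coordinates $t_i$ may approach $b$ from below. This is where Corollary~\ref{shrinking-ball-corollary} and the $C^1$-quadratic separation~\eqref{lemma-claim-4} from the Barrier Modification Theorem do the real work; the rest is bookkeeping.
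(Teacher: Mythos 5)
Your construction ($Y:=$ closure of the union of all weak set flows starting inside $C$) and the treatment of properties (1) and (2) are the same as the paper's, but the verification that $Y$ is a weak set flow---the heart of the matter---has a genuine gap at exactly the step you yourself flag as delicate. After producing the strong barrier $\Kup$ and showing each $Z_i$ is disjoint from $\Kup$ on $[a',b]$, you conclude that $Y(t)\cap\Kup(t)=\emptyset$ for $t\in[a',b)$ and then assert ``hence $p\in Y(b)$ cannot lie in $\Kup(b)$.'' That inference is a non sequitur: disjointness at all times strictly before $b$ never rules out contact at time $b$ (that configuration is precisely the one Definition~\ref{weak-set-flow-definition} is designed to govern), and the closure of a union of sets, each disjoint from a closed set, can perfectly well meet that set. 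Knowing in addition that each individual $Z_i$ avoids $\Kup$ even at time $b$ does not help, because the $Z_i$ could approach $\partial\Kup(b)$ with no uniform margin, and it is only the closure $Y$ that must contain $p$. So as written the contradiction is not established, and neither Corollary~\ref{shrinking-ball-corollary} nor the quadratic separation \eqref{lemma-claim-4} is invoked in a way that closes this hole.

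What is missing is a separation bound that is \emph{uniform over the family} $\mathcal{Z}$, and this is exactly the device the paper uses: verify characterization \eqref{equivalent-3} of Theorem~\ref{equivalent-definitions-theorem} directly, and fatten the strong barrier, setting $K_\eps(t)=\{x:\dist(x,K(t))\le\eps\}$ with $0<\eps<\dist(Y(a),K(a))$ small enough that $K_\eps$ is still a strong barrier. Then every $Z\in\mathcal{Z}$ satisfies $\dist(Z(a),K(a))\ge\dist(Y(a),K(a))>\eps$, so $Z(a)\cap K_\eps(a)=\emptyset$, hence by \eqref{equivalent-2} of Theorem~\ref{equivalent-definitions-theorem} $Z(t)\cap K_\eps(t)=\emptyset$, i.e.\ $\dist(Z(t),K(t))>\eps$ for all $t\in[a,b]$, uniformly in $Z$; unlike bare disjointness, this quantitative bound survives the closure and yields $\dist(Y(t),K(t))\ge\eps$. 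The same fattening applied to your $\Kup$ would repair your argument, but note that the detour through the Barrier Modification Theorem~\ref{barrier-modification-theorem} is then unnecessary---that work is already encapsulated in the equivalence theorem, so one can test $Y$ against strong barriers only. A minor further point: for $Y(0)\subset C$, Corollary~\ref{shrinking-ball-corollary}\eqref{ball-corollary-one} applied ``to each $Z_i$'' is not quite the right tool, since the constants there depend on the base point and you need a bound uniform along $x_i\to x$; the clean statement is Theorem~\ref{shrinking-ball-theorem}\eqref{ball-two} applied at the fixed point $x\notin C$, whose constants $\eps,c$ depend only on $x$ and so apply to every $Z_i$ simultaneously.
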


\begin{proof}
Let 
\[
\mathcal{Z}:=\{Z\;\textrm{is a weak set flow with}\;Z(0) \subset C\}.
\]
Let  $Y$ be the closure of $\bigcup_{Z\in \mathcal{Z}}Z$. 

Note that $C\times \{0\}$ is an element of $\mathcal{Z}$.  Thus $C\subset Y(0)$.
Shrinking ball barriers (see Theorem~\ref{shrinking-ball-theorem}) imply that $Y(0)\subset C$.
Thus $Y(0)=C$.

It remains to check that $Y$ is indeed a weak set flow. 
By Theorem~\ref{equivalent-definitions-theorem},
 it suffices to check that if $t\in [a,b]\mapsto K(t)$ is a strong barrier with $a\ge 0$
and with $Y(a)\cap K(a)=\emptyset$, then $Y(t)\cap K(t)$ is empty for every $t\in (a,b)$.  
Choose $0<\eps<\dist(Y(a),K(a))$ sufficiently small that the spacetime set
\begin{equation}\label{K_eps_def}
K_\eps:=\{ (p,t): t\in [a,b], \, \dist(p,K(t)) \le \eps\}
\end{equation}

is a strong barrier.
  Let $Z\in \mathcal{Z}$.  Then 
\[
   \dist(Z(a),K(a))  \ge \dist(Y(a),K(a)) > \eps,
\]
so $Z(a)$ is disjoint from $K_\eps(a)$.  
Consequently, $Z(t)$ is disjoint from $K_\eps(t)$ for each $t\in [a,b]$.  That is,
\[
    \text{$\dist(Z(t), K(t)) > \eps$ for all $t\in [a,b]$}.
\]
Consequently,
\[
   \text{$\dist(Y(t), K(t))  \ge \eps$ for all $t\in (a,b)$}.
\]
This completes the proof that $Z$ is a weak set flow.
\end{proof}

\begin{definition}
If $C$ is a closed subset of $N$ and if $t\ge 0$, we let
\[
   F_t(C) = Y(t)
\]
where $Y\subset N\times [0,\infty)$ is the biggest flow generated by $C$.
\end{definition}

\begin{proposition}\label{semigroup-proposition}
$F_{t+s}(C)=F_t(F_s(C))$ for all $s,t\ge 0$.
\end{proposition}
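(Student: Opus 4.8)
The plan is to prove the two inclusions separately, using only the defining extremal property of the biggest flow from Theorem~\ref{biggest-flow-theorem}. Write $Y$ for the biggest flow generated by $C$, so $F_r(C) = Y(r)$ for all $r \ge 0$, and fix $s \ge 0$.

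For the inclusion $F_{t+s}(C) \subset F_t(F_s(C))$: define a candidate flow $\widehat Y$ by shifting $Y$ in time by $s$, i.e.\ $\widehat Y(t) := Y(t+s)$ for $t \ge 0$. First I would check that $\widehat Y$ is a weak set flow with starting time $0$; this is immediate from Definition~\ref{weak-set-flow-definition}, since a smooth compact barrier $t \in [a,b] \mapsto K(t)$ with $a \ge 0$ pulls back under the time shift to a smooth compact barrier on $[a+s, b+s]$ (with $a + s \ge s \ge 0 = T_0$), and the quantities $\vv_K$, $\HH_K$, $\Phi_K$ are unchanged by a rigid time translation. Since $\widehat Y(0) = Y(s) = F_s(C)$, the extremal property of the biggest flow generated by $F_s(C)$ gives $\widehat Y(t) \subset F_t(F_s(C))$ for all $t \ge 0$, which is exactly $Y(t+s) \subset F_t(F_s(C))$, i.e.\ $F_{t+s}(C) \subset F_t(F_s(C))$.

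For the reverse inclusion $F_t(F_s(C)) \subset F_{t+s}(C)$: let $W$ denote the biggest flow generated by $F_s(C)$, so $W(t) = F_t(F_s(C))$ and $W(0) = F_s(C) = Y(s)$. I would splice $W$ onto $Y$ by setting
\[
  \widetilde Z(r) :=
  \begin{cases}
    Y(r) & \text{if } 0 \le r \le s, \\
    W(r - s) & \text{if } r > s.
  \end{cases}
\]
The key point is that $\widetilde Z$ is a weak set flow with starting time $0$: on $[0,s]$ it agrees with the weak set flow $Y$, on $(s,\infty)$ it agrees with the time-shifted weak set flow $r \mapsto W(r-s)$ (which, by the time-translation argument above, is a weak set flow with starting time $s$), and the two pieces match continuously at $r = s$ since $W(0) = Y(s)$. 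To verify the weak set flow condition at a barrier contact at some time $b$, the only case needing care is $b = s$; but a contact at time $b$ only constrains the barrier at and before time $b$, and on $[a,s]$ (for $a \ge 0$) $\widetilde Z$ coincides with $Y$, so the condition follows from $Y$ being a weak set flow. For $b > s$ the condition follows from the shifted $W$ being a weak set flow with starting time $s < b$, and for $b < s$ from $Y$. Since $\widetilde Z(0) = Y(0) = C$, the extremal property of $Y$ (the biggest flow generated by $C$) gives $\widetilde Z(r) \subset Y(r)$ for all $r \ge 0$; taking $r = t + s$ yields $W(t) = \widetilde Z(t+s) \subset Y(t+s)$, i.e.\ $F_t(F_s(C)) \subset F_{t+s}(C)$.

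I expect the only mild subtlety to be the verification that $\widetilde Z$ is closed in $N \times [0,\infty)$ and that it satisfies the barrier condition precisely at the gluing time $t = s$; both are handled by the observation that the defining condition in Definition~\ref{weak-set-flow-definition} at a contact time $b$ involves the barrier only on $[a,b]$, so any contact at time $b \le s$ is governed entirely by the restriction $\widetilde Z|_{[0,s]} = Y|_{[0,s]}$. Everything else reduces to the elementary fact that both the class of weak set flows and the biggest-flow construction are invariant under time translation, together with the two applications of the maximality property in Theorem~\ref{biggest-flow-theorem}.
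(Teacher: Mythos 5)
Your proof is correct and is essentially the paper's own argument: the paper simply records that time-translates of weak set flows are weak set flows and that splicing a weak set flow $Z$ with $Z(0)\subset Y(T)$ onto $Y$ at time $T$ yields a weak set flow, and declares the proposition an immediate consequence. You have just spelled out that consequence — one inclusion from the forward time-shift of the biggest flow plus maximality, the other from the spliced flow plus maximality — so there is no substantive difference in approach.
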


\begin{proof}
Suppose that $t\in [0,\infty)\mapsto Y(t)$ is a weak set flow and that $T>0$.
Then $t\in [0,\infty)\mapsto Y(t-T)$ is a weak set flow.
Also, if $t\in [0,\infty)\mapsto Z(t)$ is a weak set flow and if $Z(0)\subset Y(T)$,
then 
\[
  t\in [0,\infty) 
  \mapsto
  \begin{cases}
  Y(t) &\text{if $t\in [0,T]$,} \\
  Z(t-T) &\text{if $t> T$}
  \end{cases}
\]
is a weak set flow.
(These facts follow easily from the definition of weak set flow.)
Theorem~\ref{semigroup-proposition} is an immediate consequence.
\end{proof}

We end this section by a characterization of the biggest flow in terms of solutions to the level set equation:
\begin{equation}\label{level_set_eq}
\pdf{u}t=  |\nabla u|\Div\left( \frac{\nabla u}{|\nabla u|} \right).
\end{equation} 

\begin{theorem}\label{containment-theorem}
Let $Y\subset N\times [0,T]$ be
let $\mathcal{U} \subset N\times [0,t]$ be an open set containing $Y$. 
Suppose that there exists a continuous function $u: \mathcal{U}\rightarrow \RR$ 
such that $u$ solves~\eqref{level_set_eq} in the viscosity sense,  and 
such that $Y_a:=u^{-1}(a)$ are compact for $a\in (-\eps,\eps)$ and such that $Y_0=Y$ . Then $Y(t)=F_t(Y(0))$ for every $t\in [0,T]$.     
\end{theorem}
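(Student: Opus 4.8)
The plan is to establish the two inclusions $Y(t)\subset F_t(Y(0))$ and $F_t(Y(0))\subset Y(t)$ for $t\in[0,T]$. Write $C:=Y(0)=\{x\in N:u(x,0)=0\}$; it is compact because $Y_0=u^{-1}(0)$ is. The one fact I would import is the classical correspondence --- due to Evans--Spruck, Chen--Giga--Goto, and Ilmanen --- between viscosity solutions of the level set equation~\eqref{level_set_eq} and weak set flows: since $u$ is a viscosity solution, each of its level sets is a weak set flow, and in particular, for $a\in(-\eps,\eps)$ the compact set $Y_a=u^{-1}(a)$ is (the spacetime track of) a compact weak set flow $t\mapsto Y_a(t)$. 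If one wants a self--contained proof of this, then by Theorem~\ref{equivalent-definitions-theorem} it suffices to check that $Y_a$ is not bumped by a smooth mean curvature flow $M(\cdot)$; representing $M$ near a first contact point $p$ at time $\beta$ as the zero set of a smooth solution $g$ of~\eqref{level_set_eq} (a short--time foliation by mean curvature flows through $M(\beta)$), one notes that $u-a$ and $g$ have no common zero for $t<\beta$ near $p$, and feeds the resulting one--sided contact of $\{u=a\}$ against $\{g=0\}$ into the viscosity sub-- and supersolution inequalities for $u$ at $(p,\beta)$ to derive a contradiction. Granting this, $t\mapsto Y(t)=Y_0(t)$, extended by $\emptyset$ for $t>T$, is a weak set flow with initial value $C$, so Theorem~\ref{biggest-flow-theorem} yields $Y(t)\subset F_t(C)$ for all $t\in[0,T]$.

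For the reverse inclusion, let $Y'(t):=F_t(C)$ be the biggest flow generated by $C$, so $Y'(0)=C$. For each $a$ with $0<|a|<\eps$, the compact weak set flow $Y_a$ is disjoint from $Y'$ at time $0$ (since $Y'(0)=C\subset\{u(\cdot,0)=0\}$ while $a\ne0$), so by the Avoidance Theorem~\ref{avoidance-theorem}, $Y'(t)\cap Y_a(t)=\emptyset$ for every $t\in[0,T]$. Hence whenever $p\in Y'(t)$ with $(p,t)\in\mathcal U$, we have the dichotomy: $u(p,t)=0$ or $|u(p,t)|\ge\eps$. It remains to rule out the second alternative --- together with the a priori possibility that $Y'(t)$ escapes $\mathcal U$ --- and this is where I would invoke finite speed of propagation.

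Let $\mathcal S\subset[0,T]$ be the set of times $t$ with $Y'(s)\subset Y(s)$ for all $s\le t$. It is an initial segment, it contains $0$, and it is closed: if $t_n\uparrow t^*$ with $t_n\in\mathcal S$ and $p\in Y'(t^*)$, then Corollary~\ref{shrinking-ball-corollary} gives points $p(s)\in Y'(s)\subset Y(s)=Y_0(s)$ with $p(s)\to p$ as $s\uparrow t^*$, whence $(p,t^*)\in Y_0$ (this set being closed), i.e.\ $p\in Y(t^*)$. So it suffices to show: if $t^*\in\mathcal S$ and $t^*<T$ then $[t^*,t^*+h_0]\cap[0,T]\subset\mathcal S$ for some $h_0>0$; since $\mathcal S$ is a closed initial segment, this forces $\mathcal S=[0,T]$, and then $F_t(C)=Y'(t)\subset Y(t)$, finishing the proof. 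By the semigroup property (Proposition~\ref{semigroup-proposition}), $Y'(t^*+h)=F_h(Y'(t^*))$, where $Y'(t^*)$ is a compact subset of $Y(t^*)=\{u(\cdot,t^*)=0\}$. Since the compact set $Y_0$ lies in the open set $\mathcal U$, one can fix $\rho>0$ and a compact $\Omega\subset N$ so that the $\rho$--neighborhood of $Y_0$ in $N\times[0,T]$ lies in $\mathcal U$, all slices $Y_0(t)$ lie in $\Omega$, and geodesic balls of radius $\rho$ about points of $\Omega$ are compact with Ricci curvature bounded below. A standard application of Theorem~\ref{finite-speed-theorem} to $h\mapsto F_h(Y'(t^*))$ then confines this flow to an $\omega_1(h)$--neighborhood of $Y'(t^*)\subset\Omega$, with $\omega_1(h)\to0$ as $h\downarrow0$; so for $h\in[0,h_0]$ with $h_0>0$ small (depending on $\rho$, $n$, the Ricci bound, and the modulus of continuity of $u$ on the $\rho/2$--neighborhood of $Y_0$) we have, for every $x\in F_h(Y'(t^*))$, that $(x,t^*+h)\in\mathcal U$ and $|u(x,t^*+h)|<\eps$, using continuity of $u$ and the fact that $u$ vanishes on $Y'(t^*)\times\{t^*\}$. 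Combined with the dichotomy above, $u\equiv0$ on $F_h(Y'(t^*))$, i.e.\ $Y'(t^*+h)\subset Y(t^*+h)$ for all $h\in[0,h_0]$, as required.

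The main obstacle is the imported correspondence between viscosity solutions of~\eqref{level_set_eq} and weak set flows; if the paper is to remain self--contained, that implication must be proved (along the lines sketched above). Everything else is a routine combination of the avoidance theorem, finite speed of propagation, the semigroup property, and the shrinking--ball estimate.
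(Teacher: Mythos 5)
Your first inclusion follows the same path as the paper: the paper also imports the fact that level sets of a viscosity solution of \eqref{level_set_eq} are weak set flows, via Ilmanen's relabeling lemma \cite[3.2]{ilmanen-generalized} (which is what converts $u$, defined only on $\mathcal U$, into a globally defined solution) together with \cite[6.3]{ilmanen-generalized} and Theorem~\ref{equivalent-definitions-theorem}; your sketched direct verification is an acceptable substitute since the barrier condition is local. For the reverse inclusion your route is genuinely different in its bookkeeping. The paper takes $t_0$ to be the first time with $Y(t)\neq F_t(Y(0))$, uses Corollary~\ref{shrinking-ball-corollary} to show $F_{t_0}(Y(0))=Y(t_0)$ and that $F_t(Y(0))\subset\mathrm{Int}\bigl(\bigcup_{|a|<\eps/4}Y_a(t)\bigr)$ for $t$ slightly after $t_0$, and then gets a contradiction from Theorem~\ref{avoidance-theorem} applied, starting at time $t_0$, to $F_\cdot(Y(0))$ and a level set $Y_a$, $a\neq 0$, that it must meet. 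You instead apply avoidance from time $0$ against every $Y_a$ with $0<|a|<\eps$ to obtain the dichotomy $u\in\{0\}\cup\bigl(\RR\setminus(-\eps,\eps)\bigr)$ on the biggest flow inside $\mathcal U$, and then close with an open/closed-in-time argument using the semigroup property, finite speed of propagation, and continuity of $u$. Both proofs rest on the same two pillars (avoidance against neighboring level sets, plus short-time localization of the biggest flow near the zero level set), so the substance is comparable; yours avoids the ``first bad time'' discussion at the cost of the semigroup/finite-speed machinery.

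One step of yours needs stronger hypotheses than you state: the confinement of $F_h(Y'(t^*))$ to a small neighborhood of $Y'(t^*)$. Your local setup (a compact $\Omega$ with geodesic balls of radius $\rho$ about points of $\Omega$ compact and Ricci bounded below there) lets you invoke Theorem~\ref{finite-speed-theorem} only at points near $\Omega$; it does not exclude components of the biggest flow appearing far from $\Omega$, and such components can genuinely occur: in the example following Theorem~\ref{compact-theorem} there is a weak set flow that is empty at time $0$ and nonempty immediately afterwards, so the biggest flow of \emph{any} set acquires pieces coming in from infinity, and in that generality the theorem itself is false. So here one must use the paper's standing convention for global results (item (3) in Section~\ref{X-flow-section}): $N$ complete with Ricci curvature bounded below. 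Under that hypothesis Theorem~\ref{compact-theorem} gives exactly the confinement $F_h(Y'(t^*))\subset\{x:\dist(x,Y'(t^*))\le r+h(r,\lambda,n)\,h\}$ for every $r>0$, and the rest of your argument goes through with uniform $h_0$. This is not a defect peculiar to your approach---the paper's containment $F_t(Y(0))\subset\mathrm{Int}\bigl(\bigcup_{|a|<\eps/4}Y_a(t)\bigr)$ implicitly needs the same exclusion of far-away pieces---but in your write-up it should be made explicit that the finite-speed step is global, not local.
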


\begin{proof}
Note that  $Y\subset \mathrm{Int}(\bigcup_{a\in (-\eps/4,\eps/4)}Y_a)$. Letting $\chi :[-1,1]\rightarrow \RR$ be a continuous function such that $\chi(x)=x$ for $|x|\leq \eps/4$ and $\chi(x)=1$ when $|x|\geq \eps/2$, 
the relabeling lemma \cite[3.2]{ilmanen-generalized} implies that $v:=\chi(u)$ is a solution of \eqref{level_set_eq} on $N\times [0,T]$. 
Now, \cite[6.3]{ilmanen-generalized} and 
Theorem~\ref{equivalent-definitions-theorem} imply that for each $a\in (-\eps/4,\eps/4)$, $Y_a$ is a weak set flow. In particular, $Y(t)\subset F_t(Y(0))$. 

Assuming that $Y(t)\neq F_t(Y(0))$ for some $t\in [0,T]$, set $t_0=\inf\{t\in [0,T]\;|\; Y(t)\neq F_t(Y(0))\}$. Note that Lemma \ref{shrinking-ball-corollary} implies that $Y(t_0)=F_{t_0}(Y(0))$ and that if $t>t_0$ is such that $t-t_0$ is sufficiently small, then 
\[
F_t(Y(0)) \subset \mathrm{Int}\left(\bigcup_{|a|<\eps/4}Y_a(t)\right).       
\]
By the definition of $t_0$, there exists some $t>t_0$ and $a\neq 0$ such that $Y_a(t)\cap F_t(Y(0))\neq \emptyset$. But as both $Y_a$ and $F_t(Y(0))$ are compact weak set flows with $F_{t_0}(Y(0))\cap Y_a(t_0)=\emptyset$, this contradicts Theorem \ref{avoidance-theorem}.  
\end{proof}

\section{Limits of Weak Set Flows}\label{limits-section}

\begin{definition}
Suppose $(Q,d)$ is a metric space.  We say that a sequence $Z_n\subset Q$ of closed subsets
{\bf Kuratowski-converges} to $Z\subset Q$ if
\[
   Z= \{ x: \limsup_n d(x,Z_n)=0\} = \{x: \liminf_n d(x,Z_n)=0\}.
\]
\end{definition}

Note that Kuratowski-convergence $Z_n\to Z$ is equivalent to: every point in $Z$ is a limit of a sequence 
of points $p_n\in Z_n$, and no point in $Q\setminus Z$ is a subsequential limit of such points.
Thus two metrics on $Q$ that give the same topology also give the same notion of Kuratowski-convergence.

If $Q$ is separable and if $Z_n$ is a sequence of closed subsets of $Q$, then, after passing to a subsequence,
$\dist(\cdot,Z_n)$ converges locally uniformly to a limit function $\delta(\cdot): Q\to [0,\infty]$ (by Arzela-Ascoli),
and thus the $Z_n$ Kuratowski-converge to $\{x: \delta(x)=0\}$.  If $Q$ is complete,
then $\delta(\cdot)=\dist(\cdot,Z)$.

\begin{theorem}\label{limits-theorem}
Let $g_n$ be a sequence of Riemannian metrics on $N$ that converge smoothly to a Riemannian metric $g$.
For $n=1,2,\dots$, let $Z_n\subset N\times [0,\infty)$ be a weak set flow (for the metric $g_n$)
such that the sequence $Z_n$ Kuratowski-converges to $Z$.
Then $Z$ is a weak set flow for the metric $g$.
\end{theorem}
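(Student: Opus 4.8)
To prove that $Z$ is a weak set flow for the limit metric $g$, I would verify the condition in Definition~\ref{weak-set-flow-definition} directly: given a smooth compact barrier $t\in[a,b]\mapsto K(t)$ (with respect to $g$) such that $K(t)$ is disjoint from $Z(t)$ for all $t\in[a,b)$ and a point $p\in K(b)\cap Z(b)$, I must show $p\in\partial K(b)$ and $\Phi_K^{\,g}(p,b)\ge 0$. The first claim, that $p\in\partial K(b)$, follows from the shrinking-ball argument: since each $Z_n$ is a $g_n$-weak set flow and $g_n\to g$ smoothly, Corollary~\ref{shrinking-ball-corollary} applies (with constants uniform in $n$ on a neighborhood of $p$), so $Z$ cannot enter the open interior of a barrier at a first contact time; equivalently, applying the corollary to $u(x,t)=\dist_g(x,N\setminus K(t))$ forces $p\in\partial K(b)$. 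So the substance is the inequality $\Phi_K^{\,g}(p,b)\ge 0$.

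\textbf{Main step.}
Suppose toward a contradiction that $\Phi_K^{\,g}(p,b)<0$. By the Barrier Modification Theorem~\ref{barrier-modification-theorem} (applied with some $\eta<0$, or rather with $\eta=\Phi_K^g(p,b)/2<0$), we may replace $K$ by a smaller smooth compact barrier $\Kup$ on a time interval $[\hat a,b]$ with $\Kup(t)\subset\interior K(t)$ for $t<b$, $\Kup(b)\cap\partial K(b)=\{p\}$, and $\Phi_{\Kup}^{\,g}(x,t)<\eta<0$ for all $(x,t)$ with $x\in\partial\Kup(t)$. Thus $\Kup$ is a \emph{strong} barrier for $g$ with $\Phi_{\Kup}^{\,g}$ bounded strictly below zero on the compact set $\{(x,t):x\in\partial\Kup(t)\}$. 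Since $g_n\to g$ smoothly and $\Kup$ is a fixed smooth family of compact regions, the quantities $\nu$, $\HH$, $\vv$ and hence $\Phi_{\Kup}^{\,g_n}$ depend continuously (in $C^0$, indeed $C^\infty$) on the metric; therefore for all large $n$, $\Phi_{\Kup}^{\,g_n}(x,t)<\eta/2<0$ everywhere on $\partial\Kup(t)$, so $\Kup$ is a strong barrier for $g_n$ as well.

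\textbf{Deriving the contradiction.}
Now I use the Kuratowski convergence to produce contact. Since $p\in Z(b)$, there are points $p_n\in Z_n(b)$ with $p_n\to p$. On the other hand, $Z(a)\cap\Kup(a)=\emptyset$ wait — I need disjointness at earlier times too; more carefully, since $\Kup(t)\subset\interior K(t)$ for $t<b$ and $K(t)$ is disjoint from $Z(t)$ there, we have $\Kup(t)\cap Z(t)=\emptyset$ for all $t\in[\hat a,b)$. In particular there is a $g$-neighborhood of $\Kup(\hat a)$ disjoint from $Z(\hat a)$, and by Kuratowski convergence of $Z_n$ to $Z$, $Z_n(\hat a)$ is disjoint from $\Kup(\hat a)$ for all large $n$. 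Since $\Kup$ is a strong barrier for $g_n$ and $Z_n$ is a $g_n$-weak set flow, the equivalence in Theorem~\ref{equivalent-definitions-theorem} (property~\eqref{equivalent-3}) gives $Z_n(t)\cap\Kup(t)=\emptyset$ for all $t\in[\hat a,b)$; and since $\Phi_{\Kup}^{\,g_n}<0$ on all of $\partial\Kup$, the same theorem in fact gives $Z_n(t)\cap\Kup(t)=\emptyset$ for all $t\in[\hat a,b]$, including $t=b$. But $p_n\in Z_n(b)$ and $p_n\to p\in\Kup(b)$; since $\Kup(b)$ is closed and $p$ is an interior-or-boundary point of $\Kup(b)$ (indeed $p\in\partial\Kup(b)\subset\Kup(b)$), for large $n$ we would need $p_n$ arbitrarily close to $\Kup(b)$ yet outside it — which is possible a priori, so I should instead apply Corollary~\ref{shrinking-ball-corollary} for $g_n$ to $u_n(x,t)=\dist_{g_n}(x,\Kup(t))$: since $\dist_{g_n}(p_n,\Kup(b))\to\dist_g(p,\Kup(b))=0$ and $\tilde u_n(t)\ge \eps_0>0$ uniformly for $t<b$ (by the disjointness just established, using compactness of $\Kup$), the corollary's upper semicontinuity $\tilde u_n(b)\ge\limsup_{t\uparrow b}\tilde u_n(t)$ is violated for large $n$. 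This contradiction shows $\Phi_K^{\,g}(p,b)\ge 0$, completing the proof.

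\textbf{Expected obstacle.}
The delicate point is making the metric-dependence uniform: I must ensure that the shrinking-ball $\eps$ and the speed–mean-curvature estimates in Theorem~\ref{shrinking-ball-theorem} and Corollary~\ref{shrinking-ball-corollary} can be chosen uniformly in $n$, which follows from smooth convergence $g_n\to g$ on a fixed compact neighborhood of $\spt K$, but should be stated carefully. The other care needed is that $\Phi_{\Kup}^{\,g_n}\to\Phi_{\Kup}^{\,g}$ uniformly on the compact spacetime boundary $\partial\Kup$: this is immediate from the formulas~\eqref{f-expressions} together with the fact that a fixed defining function $f$ works for all metrics and $\nabla^{g_n}$, $\Div^{g_n}$ converge to $\nabla^g$, $\Div^g$ uniformly on compacta. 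Everything else is bookkeeping with Kuratowski convergence and the already-established equivalence of definitions.
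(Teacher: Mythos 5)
Your route — verify Definition~\ref{weak-set-flow-definition} directly at a touching point, modify the touching barrier to a strong barrier $\Kup$ via Theorem~\ref{barrier-modification-theorem} with $\eta<0$, observe $\Phi_{\Kup}^{g_n}<\eta/2<0$ for large $n$ by smooth convergence, and then use Theorem~\ref{equivalent-definitions-theorem} to get $Z_n(t)\cap\Kup(t)=\emptyset$ — is different from the paper's, which never analyzes a touching point and instead works only with the strong-barrier characterization. The problem is in your limiting step, and it is a genuine gap rather than bookkeeping. First, ``since $p\in Z(b)$, there are points $p_n\in Z_n(b)$ with $p_n\to p$'' is false in general: Kuratowski convergence of the spacetime sets does not give convergence of time slices (the paper's remark right after the statement of Theorem~\ref{limits-theorem} gives the shrinking-circles counterexample). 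All you get is $(p_n,t_n)\in Z_n$ with $(p_n,t_n)\to(p,b)$, where $t_n$ may differ from $b$ and may even exceed $b$, a time at which $\Kup$ is not defined, so avoidance of $\Kup$ on $[\hat a,b]$ says nothing about such points. Second, the claimed uniform bound $\tilde u_n(t)\ge\eps_0>0$ for $t<b$ does not follow from ``the disjointness just established'': Theorem~\ref{equivalent-definitions-theorem} gives only $Z_n(t)\cap\Kup(t)=\emptyset$, i.e.\ a positive distance for each fixed $n$ and $t$, with no uniformity in either variable. In the very scenario you are trying to exclude — $Z$ touching $\partial\Kup(b)$ at $p$ — one expects $\dist_{g_n}(Z_n(t),\Kup(t))\to0$ as $n\to\infty$ for $t$ near $b$ without any $Z_n$ ever meeting $\Kup$. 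So both ingredients of your final ``violation of Corollary~\ref{shrinking-ball-corollary}'' are unestablished, and the contradiction does not go through as written.

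What is missing is a quantitative buffer converting qualitative avoidance into a uniform distance bound that survives the Kuratowski limit. This is exactly the paper's device: fatten the strong barrier to $K_\eps(t)=\{x:\dist_g(x,K(t))\le\eps\}$, which for small $\eps$ is still a strong barrier for $g$, hence for $g_n$ for large $n$, and is still disjoint from $Z_n$ at the initial time (by the compactness argument you correctly use at time $\hat a$); avoidance of $K_\eps$ then yields $\dist_{g_n}(Z_n(t),K(t))\ge\eps$ uniformly on the barrier's time interval, and this bound passes to the limit to give $\dist_g(Z(t),K(t))\ge\eps$ for $t$ in the open interval — working on the open interval together with property (3) of Theorem~\ref{equivalent-definitions-theorem} is how the paper sidesteps the $t_n>b$ issue. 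Alternatively, one can propagate a uniform-in-$n$ initial separation $\dist_{g_n}(Z_n(\hat a),\Kup(\hat a))\ge\eta_0$ (which does follow from Kuratowski convergence and compactness of $\Kup(\hat a)$) via Corollary~\ref{key-corollary}, and prolong the strong barrier slightly beyond $b$, or invoke Theorem~\ref{shrinking-ball-theorem}, to rule out $t_n>b$; but either repair essentially reconstructs the paper's fattening argument. A smaller instance of the same issue: you invoke Corollary~\ref{shrinking-ball-corollary} for $Z$ to conclude $p\in\partial K(b)$, yet $Z$ is not known to be a weak set flow; this can be patched by applying Theorem~\ref{shrinking-ball-theorem} to each $Z_n$ with $\eps$ uniform in $n$ and passing to the limit, but that too requires the time-slice care described above.
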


Here the metric space is $N\times [0,\infty)$ with the spacetime metric 
\[
   d( (x_1,t_1), (x_2,t_2)) = \max\{\dist_g(x_1,x_2), \, |t_1-t_2|^{1/2}\}.
\]
Note in Theorem~\ref{limits-theorem} that $Z_n\to Z$ does not imply that $Z_n(t)\to Z(t)$ 
for each $t$.  For example, if $T_n\uparrow 1$, then the shrinking circles
\[
  Z_n: = \{ (p,t)\in \RR^2\times [0, T_n]:  \frac12 |p|^2 = T_n-t \}
\]
converge to the shrinking circle
\[
  Z:= \{ (p,t)\in \RR^2\times [0,1]:  \frac12 |p|^2 = 1-t \}
\]
But $Z_n(1)=\emptyset$ does not converge to $Z(1)=\{0\}$.

\begin{proof}[Proof of Theorem~\ref{limits-theorem}]
Let $a\ge 0$ and let $t:[a,b]\mapsto K(t)$ be a strong barrier with $K(a)$ disjoint from $Z(a)$.
By Theorem~\ref{equivalent-definitions-theorem},
 it suffices to show that $K(t)$ and $Z(t)$ are disjoint for each $t\in (a,b)$.

Fix a very small $\eps>0$, and let
\[
  K_\eps: t\in [a,b] \mapsto \{x\in N: \dist_g(x,K(t))\le \eps\}.
\]
In particular, we choose $\eps>0$ small enough so that $K_\eps$ is a strong barrier (with respect to $g$)
and such that $K_\eps(a)$ is disjoint from $Z(a)$.

For all sufficiently large $n$, $K_\eps$ is a strong barrier with respect to $g_n$
and $K_\eps(a)$ is disjoint from $Z_n(a)$.
Thus by Theorem~\ref{equivalent-definitions-theorem}, $K_\eps(t)$ is disjoint from $Z_n(t)$ for all $t\in[a,b]$,
so $\liminf\dist_{g_n}(K(t),Z_n(t)) \ge \eps$.  
Hence $\dist_g(K(t),Z(t)) \ge \eps$ for all $t\in (a,b)$.
In particular, $K(t)$ is disjoint from $Z(t)$ for all $t\in (a,b)$.
\end{proof}

\begin{remark}
The {\bf Kuratowski limsup} of a sequence of closed sets $Z_n$ in $Q$ is defined
to be $\{x:\liminf_n \dist(x, Z_n)=0\}$.  In Theorem~\ref{limits-theorem}, 
if we do not assume that the sequence $Z_n$ Kuratowski-converges,
 the Kuratowski limsup is a weak set flow.
The proof is exactly the same.
\end{remark}

\section{Boundaries}\label{boundary-section}

\newcommand{\Xx}{\mathcal{X}}

\begin{theorem}\label{boundary-theorem}
Suppose $C$ is a closed subset of a Riemannian manifold $N$.
Let
\[
  \Cc := \{(x,t): t\ge 0, \,\, x\in F_t(C) \}
\]
be the biggest flow generated by $C$, and let
\[
  M= \partial \Cc
\]
Then $M$ is a weak set flow and $M(0)=\partial C$.
\end{theorem}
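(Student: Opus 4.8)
The plan is to prove the two assertions separately: first the slice identification $M(0)=\partial C$, and then that $M=\partial\Cc$ is a weak set flow. Here $\partial\Cc$ always means the boundary of $\Cc$ as a subset of $N\times[0,\infty)$.

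\emph{The slice $M(0)=\partial C$.} Since $\Cc(0)=F_0(C)=C$, a point of $\partial C$ is a limit of points of $C\subseteq\Cc$ and also a limit of points of $N\setminus C$, which at time $0$ lie off $\Cc$; such a point cannot lie in $\interior(\Cc)$, so $\partial C\subseteq M(0)$. For the reverse inclusion I would show $(x,0)$ has a full spacetime neighborhood lying on one side of $\Cc$ whenever $x\notin\partial C$. If $x\in\interior(C)$, pick $\rho>0$ with $\overline B(x,2\rho)\subseteq C$; for each $y\in B(x,\rho)$ a slowly shrinking geodesic ball centered at $y$ with small initial radius (Theorem~\ref{shrinking-ball-theorem}(i)) is a smooth compact barrier with $\Phi<0$, hence a weak set flow starting inside $C$, so by maximality of the biggest flow it stays inside $\Cc$; thus $y\in F_t(C)$ for all small $t$, giving $B(x,\rho)\times[0,\tau]\subseteq\Cc$ and $(x,0)\in\interior(\Cc)$. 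If $x\notin C$ then $\dist(\Cc(0),x)>0$, and the finite-speed estimate (Theorem~\ref{finite-speed-theorem}), applied with a Ricci lower bound valid on a small compact geodesic ball about $x$ (no global bound is needed), shows $\Cc(t)$ stays uniformly away from a neighborhood of $x$ for small $t$; since $\Cc$ is closed, a neighborhood of $(x,0)$ misses $\Cc$. Either way $x\notin M(0)$, so $M(0)=\partial C$. (Combining this with Proposition~\ref{semigroup-proposition} applied to $F_a(C)$ also yields $M(a)=\partial_N F_a(C)$ for every $a$, though the argument below does not need this.)

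\emph{$M$ is a weak set flow.} By Theorem~\ref{equivalent-definitions-theorem} it suffices to show that if $t\in[a,b]\mapsto K(t)$ is a strong barrier with $a\ge 0$ and $K(a)\cap M(a)=\emptyset$, then $K(t)\cap M(t)=\emptyset$ for all $t\in[a,b)$. Suppose not; extending $K$ slightly past $b$, let $\beta\in(a,b)$ be the first contact time and fix $p\in K(\beta)\cap M(\beta)$, so $(p,\beta)\in\partial\Cc$. For each $t<\beta$ the slice $K(t)\times\{t\}$ misses $\partial\Cc$, so every point of $K(t)$ lies in $\interior(\Cc)$ or off $\Cc$; since the time-$t$ slice of $\interior(\Cc)$ is open in $N$, the compact set $K(t)$ splits into relatively clopen pieces $K_{\mathrm{in}}(t):=\{x\in K(t):(x,t)\in\interior(\Cc)\}$ and $K_{\mathrm{out}}(t):=K(t)\setminus\Cc(t)$, each a union of connected components of $K(t)$; because no component of one type can touch a component of the other before the first contact time, this splitting varies smoothly on $[a,\beta)$. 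Inspecting $K$ in a small ball $B(p,r)$ for $t$ near $\beta$, the barrier lies near $(p,\beta)$ entirely on one of the two sides, giving an ``out case'' and an ``in case.''

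\emph{The out case} is easy: here $K(t)\cap \overline B(p,r)$ is disjoint from $\Cc(t)$ for all $t<\beta$ near $\beta$, while $p\in\Cc(\beta)$. If $p\in\interior(K(\beta))$ this contradicts Corollary~\ref{shrinking-ball-corollary}(i) ($\Cc$ cannot reach $p$ from positive distance in zero time); if $p\in\partial K(\beta)$, then Theorem~\ref{nonsmooth-barriers-theorem}, applied to the weak set flow $\Cc$ and to $K$ (smooth near $(p,\beta)$, with $\Cc$ disjoint from $\interior(K)$ there for $t<\beta$), forces $\Phi_K(p,\beta)\ge 0$, contradicting that $K$ is a strong barrier. \emph{The in case is the main obstacle:} here $K(t)\cap\overline B(p,r)\subseteq\interior(\Cc(t))$ for $t<\beta$ and $(p,\beta)\in\partial\Cc$, and the difficulty is that this does not by itself forbid $\partial\Cc$ from ``retreating'' to meet $K$ at time $\beta$. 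This is precisely where the maximality of the biggest flow and the strict inequality $\Phi_K<0$ are used. I would pass to the clopen sub‑barrier $\hat K(t)=K_{\mathrm{in}}(t)$ on $[a,\beta)$ (which coincides with $K$ near $p$), then perform a small \emph{fattening} localized near $(p,\beta)$ — legitimate because $\Phi_K(p,\beta)<0$ strictly and $\Phi$ depends continuously on the barrier, exactly as in the Barrier Modification Theorem~\ref{barrier-modification-theorem} — and extend a bit past $\beta$, obtaining a strong barrier $K^+$ with $K^+(a)=\hat K(a)\subseteq\interior(\Cc(a))\subseteq F_a(C)$ and with $p\in\interior(K^+(\beta))$, say $B(p,\rho')\subseteq K^+(\beta)$. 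Being a strong barrier, $K^+$ is a weak set flow with starting time $a$ whose initial set lies in $F_a(C)$, so maximality of the biggest flow (Theorem~\ref{biggest-flow-theorem}, translated to starting time $a$ via Proposition~\ref{semigroup-proposition}) gives $K^+(t)\subseteq F_t(C)=\Cc(t)$ throughout its interval, in particular on a two‑sided neighborhood of $\beta$; by continuity $B(p,\rho'/2)\subseteq K^+(t)\subseteq\Cc(t)$ for $t$ near $\beta$, so a full spacetime neighborhood of $(p,\beta)$ lies in $\Cc$, i.e.\ $(p,\beta)\in\interior(\Cc)$ — contradicting $(p,\beta)\in\partial\Cc$. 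This completes the proof; the delicate point throughout is the in case, where one must rule out the boundary $\partial\Cc$ creeping up to a barrier that stays strictly inside $\Cc$ at all earlier times.
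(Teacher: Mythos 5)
Your argument is correct, and at bottom it runs on the same ingredients as the paper's proof: shrinking-ball barriers for the time-zero slice, maximality of the biggest flow to keep strong barriers inside $\Cc$, and fattening/prolongation of strong barriers to upgrade ``inside $\Cc$'' to ``inside $\interior(\Cc)$''. The packaging differs: the paper defines $W$ to be the union of all strong barriers whose initial slice lies in $\interior(\Cc)$, shows $W$ is relatively open and equals $\interior(\Cc)$, and then both conclusions follow at once (its reduction to connected barriers plays exactly the role of your in/out clopen decomposition); you instead run a localized first-contact contradiction, disposing of the out case via Theorem~\ref{nonsmooth-barriers-theorem} and of the in case via maximality plus fattening, which is the same mechanism in pointwise form. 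One citation should be repaired: the Barrier Modification Theorem~\ref{barrier-modification-theorem} \emph{shrinks} a barrier (it produces $\Kup(t)\subset K(t)$), so it does not literally supply the outward enlargement you invoke; the correct justification is the elementary fact, used repeatedly in the paper (e.g.\ the barrier $K_\eps$ of \eqref{K_eps_def} in the proofs of Theorems~\ref{biggest-flow-theorem} and~\ref{boundary-theorem}), that for a strong barrier and all sufficiently small $\eps>0$ the uniform neighborhood $K_\eps(t)=\{x:\dist(x,K(t))\le\eps\}$ is again a strong barrier, and that strong barriers prolong to a slightly larger time interval. Since $\hat K(a)$ is compact and lies in the open time-$a$ slice of $\interior(\Cc)\subset F_a(C)$, this uniform fattening already yields your $K^+$ with $p\in\interior(K^+(\beta))$ and $K^+(a)\subset F_a(C)$, so no time-dependent, locally supported perturbation is needed; with that substitution your proof is complete.
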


In this section, for a subset $Q$ of $N\times [0,\infty)$, terms like ``interior'' and ``boundary''
refer to the relative topology in $N\times[0,\infty)$.  Thus $\interior(Q)$
is the largest subset of $Q$ that is relatively open in $N\times [0,\infty)$.
Correspondingly, 
\[
  \partial Q = \overline{Q}\setminus \interior(Q).
\]
For example, the interior of $N\times [0,\infty)$ is (in this context) all of $N\times[0,\infty)$
and therefore the boundary is the empty set.
Since the $\Cc$ in Theorem~\ref{boundary-theorem} is closed, $\overline{\Cc}=\Cc$, and thus $\partial \Cc=\Cc \setminus\interior(\Cc)$.

\begin{proof}
Let $\Ff$ be the family of all strong barriers $K:t\in [a,b]\mapsto K(t)$ such that
$K(a)\times \{a\}$ lies in the interior of $\Cc$. 
Let $W=\cup_{K\in \Ff}K$.
Since each $K\in \Ff$ is a weak set flow, $K\subset \Cc$ (by definition of biggest set flow)
and therefore $W\subset \Cc$.

Also, $W$ is a relatively open subset of $N\times[0,\infty)$.
This follows easily from the facts that if $t\in [a,b]\mapsto K(t)$ is a strong barrier, then
\begin{enumerate}[\upshape (i)]
\item $K$ can be prolonged to a strong barrier on a slighty longer time interval $[a,b+\eps]$, and
\item $t\in[a,b]\mapsto K_\eps(t)$ is a strong barrier for all sufficiently small $\eps>0$,
where $K_\eps$ is given by \eqref{K_eps_def}.
\end{enumerate}

Using strong barriers consisting of small shrinking balls (as in Theorem~\ref{shrinking-ball-theorem}),
one sees that
\begin{equation}\label{initial-containment}
   (\interior(C))\times \{0\} \subset W,
\end{equation}
and that
\[
   \interior(\Cc)\subset W.
\]
We have shown that $W$ is an open subset of $\Cc$ that contains $\interior(\Cc)$.
Thus
\[
  W = \interior(\Cc).
\]

By~\eqref{initial-containment}, $M(0)=\partial C$.  

It remains to show that $\partial \Cc$ is a weak set flow.
Let $t\in [a,b]\mapsto K(t)$ be a strong barrier with $a\ge 0$ such that $K(a)$ is disjoint from $M(a)$,
i.e., such that $K(a)\times\{a\}$ is disjoint from $\partial \Cc$.  We may assume that $K$ is connected.
Thus either
$K(a)\times\{a\}$ is disjoint from $\Cc$ or $K(a)\times\{a\}$ lies in 
$\operatorname{interior}(\Cc)=W$.
In the first case, $K$ is disjoint from $\Cc$ since $\Cc$ is a weak set flow.
In the second case, $K$ lies in $W=\operatorname{interior}(\Cc)$ by definition of $W$.
In either case, $K$ is disjoint from $\partial \Cc$.
Thus $\partial \Cc$ is a weak set flow by Theorem~\ref{equivalent-definitions-theorem}.
\end{proof}


\section{Mean curvature flow with a transport term}\label{X-flow-section}

Let $N$ be a smooth Riemannian manifold and let $X$ be a smooth vectorfield on $N$.
A smooth one-parameter family of hypersurfaces in $N$ is said to be an {\bf $X$-mean-curvature flow}
provided the normal component of velocity is everywhere equal to the mean curvature
plus the normal component of $X$.
If $t\in [a,b]\mapsto K(t)$ is a smooth barrier and if $x\in \partial K(t)$, we let
\begin{align*}
   \HH_K^X(x,t) &= \HH_K(x,t) + X\cdot\nu_K(x,t), \\
   \Phi_K^X(x,t) &= \vv_K(x,t) - \HH_K^X(x,t),
\end{align*}
and if $v$ is a tangent vector to $N$, we let
\[
    \Ricci^X(v,v) = \Ricci(v,v) + v\cdot \nabla_vX.
\]

We define a {\bf weak set flow for $X$-mean-curvature flow} (or {\bf weak $X$-flow} for short) 
by replacing $\Phi_K$ by $\Phi_K^X$
in Definition~\ref{weak-set-flow-definition}.
With three exceptions, all the theorems and proofs in this paper remain true provided
we make the following changes:
\begin{enumerate}
\item  Mean curvature flow, $\HH_K$, and $\Phi_K$ are replaced by $X$-mean curvature flow, 
   $\HH_K^X$, and $\Phi_K^X$.
\item Lower bounds of the form $\Ricci > \lambda$ (or $\Ricci\ge \lambda$)
     are replaced by $\Ricci^X>\lambda$ (or $\Ricci^X \ge \lambda$).
\item All of the global theorems in this paper assume that $N$ is complete with
  Ricci curvature bounded below.  In the case of $X$-flows, we add the 
  assumption that $|\nabla X|$ is bounded.
\end{enumerate}
The extra hypothesis  (3) ensures that compact surfaces remain compact under the flow,
and that the empty surface remains empty under the flow.  See Theorem~\ref{compact-theorem-X} below.

The three exceptions (in which there is something new in the statement and/or the proof)
are Theorems~\ref{finite-speed-theorem}, \ref{compact-theorem},
and \ref{long-time-theorem}.
However, with very slight modification, those results continue to hold
for $X$-mean-curvature flow:


\begin{theorem}[$X$-flow version of Theorem~\ref{finite-speed-theorem}]
\label{finite-speed-theorem-X}
For every $r>0$, $\lambda\in \RR$, and positive integer $n$, 
there is a constant $h=h(r,\lambda,n)>0$  with the following 
property.   Suppose that $N$ is a smooth Riemannian $n$-manifold,
that $R>r$, that the geodesic ball $\overline{\BB}(p,R)$ in $N$ is compact, 
and that the Ricci curvature of $N$
is $\ge \lambda$ on $\overline{\BB}(p,R)$.
If $t\in [0,\infty)\mapsto Z(t)$ is a weak set flow in $N$ and if
   $\dist(Z(0),p) > R$, then 
\begin{equation}\label{swango}
   \dist(Z(t),p) > R - (h+\chi)t \qquad\text{for all $t\in [0, (R-r)/(h+\chi)]$}, 
\end{equation}
provided $|X|\le \chi$ on $\overline{\BB}(p,R)$.
\end{theorem}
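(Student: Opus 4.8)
The plan is to mimic the proof of Theorem~\ref{finite-speed-theorem} almost verbatim, inserting the transport term $X$ into the comparison argument at exactly the point where the normal velocity of the competitor barrier enters. First I would choose the auxiliary model space $\mathcal{H}$ and the constant $h$ exactly as before: let $\mathcal{H}$ be a complete $n$-manifold of constant sectional curvature whose Ricci curvature equals $\min\{0,\lambda\}$, and let $h>0$ be the mean curvature of a geodesic sphere of radius $r/2$ in $\mathcal{H}$. Note $h=h(r,\lambda,n)$ depends on nothing else, as required.

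Next I would argue by contradiction. Suppose \eqref{swango} fails at some time $t\le (R-r)/(h+\chi)$, and let $\tau$ be the first such time. By Corollary~\ref{shrinking-ball-corollary} applied to $\tilde u(t):=\dist(p,Z(t))$ (which is valid for weak $X$-flows, since Theorem~\ref{shrinking-ball-theorem} and its corollary survive the passage to $X$-flows), we get $\dist(Z(\tau),p)=R-(h+\chi)\tau$, hence $r\le\dist(Z(\tau),p)<R$. Pick $q\in Z(\tau)$ realizing this distance, and a unit-speed shortest geodesic $\gamma$ from $p$ to $q$ prolonged to length $R$. Then define the shrinking-ball barrier $K(t):=\overline{\BB}(\gamma(R-(h+\chi)t-r/2),\,r/2)$ for $t\in[0,\tau]$; exactly as before, $\gamma$ minimizing on $[0,R-h\tau]\supset$ the relevant segment makes $K$ smooth in a spacetime neighborhood of $(q,\tau)$, $K(t)$ is disjoint from $Z(t)$ for $t<\tau$, and $K(\tau)\cap Z(\tau)=\{q\}$. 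The definition of weak $X$-flow (via Theorem~\ref{nonsmooth-barriers-theorem}, whose $X$-analogue holds) then forces $\Phi_K^X(q,\tau)\ge0$, i.e. $\vv_K(q,\tau)\ge \HH_K(q,\tau)+X\cdot\nu_K(q,\tau)$.

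Now I would compute the two sides. The center of $K(t)$ moves along $\gamma$ with speed $h+\chi$ toward $q$, so the normal velocity at $q$ is $\vv_K(q,\tau)=-(h+\chi)$. For the right-hand side: the outward unit normal $\nu_K(q,\tau)$ is $\gamma'(R-h\tau)$ (pointing away from the center, i.e. along $\gamma$), so $X\cdot\nu_K(q,\tau)\ge -|X|\ge -\chi$ on $\overline{\BB}(p,R)$; and by mean curvature comparison (\cite[Lemma~7.1.2]{Petersen} or \cite{Dai-Wei}*{Theorem~1.2.2}), since the geodesic ball $\overline{\BB}(p,R)$ has $\Ricci\ge\lambda\ge\min\{0,\lambda\}$, the mean curvature of the radius-$r/2$ geodesic sphere $\partial K(\tau)$ at $q$ satisfies $\HH_K(q,\tau)>-h$. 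Therefore $\HH_K(q,\tau)+X\cdot\nu_K(q,\tau) > -h-\chi = \vv_K(q,\tau)$, so $\Phi_K^X(q,\tau)<0$, the desired contradiction.

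The only genuinely new point compared with Theorem~\ref{finite-speed-theorem} is the bookkeeping of the sign of $X\cdot\nu_K$: one must verify that $\nu_K$ at the first-contact point points \emph{away} from the center of the shrinking ball (so the worst case is $X\cdot\nu_K=-\chi$) and that choosing the enhanced speed $h+\chi$ for the center exactly absorbs this term. I expect this sign/speed calibration to be the only step requiring care; everything else — the choice of $\mathcal{H}$ and $h$, the first-time argument, the local smoothness of the barrier, the invocation of mean curvature comparison — is identical to the $X=0$ case.
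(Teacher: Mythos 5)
Your proposal is correct and is exactly the argument the paper intends: it states that the proof of Theorem~\ref{finite-speed-theorem-X} is ``almost identical'' to that of Theorem~\ref{finite-speed-theorem}, and the only genuine change is the one you identify, namely moving the center of the comparison ball at speed $h+\chi$ so that the worst-case transport term $X\cdot\nu_K\ge-\chi$ is absorbed, yielding $\HH_K^X(q,\tau)>-h-\chi=\vv_K(q,\tau)$ and contradicting $\Phi_K^X(q,\tau)\ge 0$. The only blemishes are typographical (e.g.\ writing $\gamma'(R-h\tau)$ and ``minimizing on $[0,R-h\tau]$'' where $R-(h+\chi)\tau$ is meant), which do not affect the argument.
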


The proof is almost identical to the proof of Theorem~\ref{finite-speed-theorem}.

\begin{corollary}\label{finite-speed-corollary-X}
If $r\le \delta \le R$, then
\[
   \dist(p,Z(t)) > \delta  \quad\text{for all  $t \in [0, (R-\delta)/(h+\chi)]$}.
\]
\end{corollary}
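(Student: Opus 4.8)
The plan is to deduce this directly from Theorem~\ref{finite-speed-theorem-X} by a straightforward monotonicity/restriction argument, without reworking the barrier construction. First I would observe that the hypotheses of Theorem~\ref{finite-speed-theorem-X} are in force: $\overline{\BB}(p,R)$ is compact, $\Ricci \ge \lambda$ on it, $|X| \le \chi$ on it, and $\dist(Z(0),p) > R$. Hence the conclusion \eqref{swango} holds, namely $\dist(Z(t),p) > R - (h+\chi)t$ for all $t \in [0,(R-r)/(h+\chi)]$, where $h = h(r,\lambda,n)$ is the constant furnished by that theorem. The point of the corollary is simply to extract, from this linear lower bound, the statement that $Z(t)$ stays outside the closed ball of radius $\delta$ for as long as the linear barrier has not yet shrunk below $\delta$.

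The key step is the elementary implication: if $t \le (R-\delta)/(h+\chi)$, then $(h+\chi)t \le R-\delta$, so $R - (h+\chi)t \ge \delta$, whence by \eqref{swango} (valid since $(R-\delta)/(h+\chi) \le (R-r)/(h+\chi)$ because $\delta \ge r$) we get $\dist(p,Z(t)) > R - (h+\chi)t \ge \delta$. The hypothesis $r \le \delta \le R$ is exactly what guarantees both that the time interval $[0,(R-\delta)/(h+\chi)]$ is contained in the interval $[0,(R-r)/(h+\chi)]$ on which \eqref{swango} is asserted, and that $R-\delta \ge 0$ so the interval is nonempty. One should note that at $t = (R-\delta)/(h+\chi)$ the inequality $R-(h+\chi)t = \delta$ is an equality, but \eqref{swango} gives the strict inequality $\dist(p,Z(t)) > \delta$ anyway, so the strict conclusion of the corollary holds on the closed interval.

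I do not anticipate any real obstacle here: the corollary is a repackaging of Theorem~\ref{finite-speed-theorem-X}, trading the moving linear barrier for a fixed radius $\delta$, and the only thing to check is the interval inclusion, which follows from $r \le \delta$. The one point worth stating carefully is that $h$ in the corollary is the same $h = h(r,\lambda,n)$ as in the theorem — it depends on the lower bound $r$ for the admissible radii, not on $\delta$ — so that the estimate is uniform over all $\delta \in [r,R]$.

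\begin{proof}
By Theorem~\ref{finite-speed-theorem-X}, $\dist(Z(t),p) > R - (h+\chi)t$ for all $t \in [0,(R-r)/(h+\chi)]$.
Since $r \le \delta$, we have $(R-\delta)/(h+\chi) \le (R-r)/(h+\chi)$, so this holds in particular for all $t \in [0,(R-\delta)/(h+\chi)]$.
For such $t$, $(h+\chi)t \le R-\delta$, hence $R - (h+\chi)t \ge \delta$, and therefore $\dist(p,Z(t)) > \delta$.
\end{proof}
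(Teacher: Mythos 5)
Your proof is correct and is exactly the immediate deduction the paper has in mind (the paper states the corollary without proof, treating it as a direct consequence of Theorem~\ref{finite-speed-theorem-X}): the hypothesis $r\le\delta$ puts $[0,(R-\delta)/(h+\chi)]$ inside the interval where \eqref{swango} holds, and there $R-(h+\chi)t\ge\delta$ gives the strict bound. Nothing is missing.
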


\begin{theorem}[$X$-flow version of Theorem~\ref{compact-theorem}]
\label{compact-theorem-X}
Suppose that $N$ is a complete Riemannian manifold with Ricci curvature bounded below,
that $X$ is a smooth vectorfield on $N$ with $|\nabla X|$ bounded, 
and that $t\in [0,\infty)\mapsto Z(t)$ is a weak $X$-flow in $N$.
\begin{enumerate}
\item If $Z(0)$ is empty, then $Z(t)$ is empty for every $t$.
\item If $Z(0)$ is compact, then $\cup_{t\le T}Z(t)$ is compact 
for every $T<\infty$.
\end{enumerate}
\end{theorem}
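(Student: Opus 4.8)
The plan is to follow the proof of Theorem~\ref{compact-theorem}, replacing the finite-speed estimate of Theorem~\ref{finite-speed-theorem} with that of Theorem~\ref{finite-speed-theorem-X}. The one genuinely new feature is that the propagation speed $h+\chi$ in Theorem~\ref{finite-speed-theorem-X} depends on an upper bound $\chi$ for $|X|$ on the geodesic ball in question; since we assume only that $|\nabla X|$ is bounded, $|X|$ may grow linearly, so $\chi$ blows up on large balls and there is no single global containment of the type $Z(t)\subset\{x:\dist(x,Z(0))\le r+(h+\chi)t\}$. The remedy is to prove such a containment only over a short time interval --- of length less than roughly $1/(2\Lambda)$, where $\Lambda:=\sup_N|\nabla X|$ --- and to iterate. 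I record three preliminaries: (a) integrating $\tfrac{d}{dt}|X\circ\gamma|\le\Lambda|\gamma'|$ along a minimizing geodesic gives $|X(x)|\le|X(y)|+\Lambda\,\dist(x,y)$ for all $x,y\in N$; (b) since $N$ is complete, every closed geodesic ball is compact, so Theorem~\ref{finite-speed-theorem-X} applies with any center $p$, any radius $R$, with $\lambda$ a global lower bound for Ricci, and with $\chi:=\sup_{\overline{\BB}(p,R)}|X|\le|X(p)|+\Lambda R$; (c) since $X$-mean-curvature flow is time-translation invariant, if $Z$ is a weak $X$-flow on $[0,\infty)$ and $c\ge0$, then $t\mapsto Z(t+c)$ is again a weak $X$-flow on $[0,\infty)$. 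We may assume $\Lambda>0$, since otherwise $|X|$ is bounded and the proof of Theorem~\ref{compact-theorem} applies verbatim. Fix $r=1$, let $h:=h(1,\lambda,n)$ be as in Theorem~\ref{finite-speed-theorem-X}, and set $s_0:=1/(3\Lambda)$.

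\textbf{Part (1).} Suppose $Z(0)=\emptyset$, and fix $p\in N$ and $s\in[0,s_0]$. For every $R>1$ we have $\dist(Z(0),p)=+\infty>R$, so Theorem~\ref{finite-speed-theorem-X} gives $\dist(Z(t),p)>R-(h+\chi)t$ for $0\le t\le(R-1)/(h+\chi)$, with $\chi\le|X(p)|+\Lambda R$. Holding $p$ fixed and letting $R\to\infty$, we have $(R-1)/(h+\chi)\ge(R-1)/(h+|X(p)|+\Lambda R)\to1/\Lambda>s_0\ge s$, so for $R$ large the estimate is valid at $t=s$, giving $\dist(Z(s),p)>R(1-\Lambda s)-(h+|X(p)|)s$; since $1-\Lambda s\ge\tfrac23>0$, the right side tends to $+\infty$, so $\dist(Z(s),p)=+\infty$ and $p\notin Z(s)$. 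As $p$ was arbitrary, $Z(s)=\emptyset$ for every $s\in[0,s_0]$. Applying this repeatedly to the time-translates $t\mapsto Z(t+ks_0)$ via (c) yields $Z(t)=\emptyset$ for all $t\ge0$.

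\textbf{Part (2).} Suppose $Z(0)$ is compact and fix $T<\infty$; we show $\bigcup_{t\le T}Z(t)$ is compact. Since $Z(0)$ is compact it lies in a compact geodesic ball $\overline{\BB}(p_0,\rho_0)$. The crux is the short-time claim: \emph{for each $q\in N$ and $\rho>0$ there is a finite $\rho'(q,\rho)\ge\rho$ such that if $W$ is a weak $X$-flow on $[0,\infty)$ with $W(0)\subset\overline{\BB}(q,\rho)$ and $s\in[0,s_0]$, then $\bigcup_{t\in[0,s]}W(t)\subset\overline{\BB}(q,\rho'(q,\rho))$.} To prove it, let $p$ satisfy $D:=\dist(p,q)\ge D_0$, with $D_0$ large (chosen below), set $R:=D-\rho-1$, and note $\dist(W(0),p)\ge D-\rho>R$ (the case $W(0)=\emptyset$ following from Part (1)). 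Every $x\in\overline{\BB}(p,R)$ has $\dist(x,q)\le R+D\le2D$, so $\chi\le|X(q)|+2\Lambda D$ there. Since $s\le s_0<1/(2\Lambda)$, one checks that $(R-1)/(h+\chi)\ge s_0$ once $D_0$ is large enough, so Theorem~\ref{finite-speed-theorem-X} gives, for all $t\in[0,s]$,
\[
  \dist(W(t),p)\;>\;R-(h+\chi)s\;\ge\;D(1-2\Lambda s)-\rho-1-(h+|X(q)|)s\;\ge\;\tfrac13 D-\rho-1-\frac{h+|X(q)|}{3\Lambda},
\]
which is positive as soon as $D_0=D_0(q,\rho)$ is chosen appropriately; this proves the claim with $\rho'(q,\rho):=D_0(q,\rho)$. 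Now pick an integer $K$ with $s:=T/K\le s_0$ and set $\rho_{j+1}:=\rho'(p_0,\rho_j)$. Applying the claim to the weak $X$-flow $W(t):=Z(t+js)$ and inducting on $j$ gives $\bigcup_{t\in[js,(j+1)s]}Z(t)\subset\overline{\BB}(p_0,\rho_{j+1})$, hence $\bigcup_{t\le T}Z(t)\subset\overline{\BB}(p_0,\rho_K)$. This ball is compact; since $Z$ is closed in $N\times[0,\infty)$, the set $Z\cap(N\times[0,T])$ is compact, and its projection to $N$, namely $\bigcup_{t\le T}Z(t)$, is therefore compact.

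The only real difficulty is the one flagged at the start: since only $|\nabla X|$, not $|X|$, is controlled, the speed $h+\chi$ in Theorem~\ref{finite-speed-theorem-X} degenerates on large balls, so no global argument is available. The content of the proof is the choice of a time-step $s_0$ small compared with $1/\Lambda$ --- specifically $s_0<1/(2\Lambda)$, since a test point at distance $D$ from the base point forces a bound on $|X|$ over a ball reaching out to distance $\approx2D$, where $|X|$ is of size $\approx2\Lambda D$ --- so that the admissible time range $(R-r)/(h+\chi)$ stays above $s_0$ and the iteration closes; the rest is bookkeeping parallel to Theorems~\ref{finite-speed-theorem} and~\ref{compact-theorem}.
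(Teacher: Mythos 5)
Your proof is correct and takes essentially the same route as the paper's: both exploit the observation that the $|\nabla X|$ bound forces $|X|$ to grow at most linearly, and both convert the $X$-finite-speed estimate into a short-time containment (over a time step on the order of the reciprocal of the linear-growth constant) that is then iterated. The only differences are cosmetic: the paper parametrizes by $c=\sup|X(\cdot)|/\max\{1,\dist(\cdot,x_0)\}$ and works with the biggest flow $F_t^X$ so that radii grow geometrically by a fixed factor each time step, whereas you parametrize by $\Lambda=\sup|\nabla X|$, work directly with $Z$ via time-translation, and track the radii $\rho_j$ inductively.
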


\begin{proof}[Proof of Theorem~\ref{compact-theorem-X}]
It suffices to consider the case that $N$ is connected.  Let $x_0$ be a point in $N$.
Since $|\nabla X|$ is bounded,
\[
    c: = \sup\frac{|X(\cdot)|}{\max\{1,\dist(\cdot,x_0)\}} <\infty.
\]
If $Q$ is a closed subset of $N$, let $t\in [0,\infty)\mapsto F_t^X(Q)$ denote the biggest weak $X$-flow with 
  $F_0^X(Q)=Q$.
  
Let $h=h(1,\lambda,n)$ be as in Theorem~\ref{finite-speed-theorem-X},
 where $\lambda$ is a lower bound for Ricci curvature on $N$.

Let $R\ge 2$.
On the ball $\overline{\BB}(x_0,2R)$, $|X|$ is bounded by $2Rc$.
Since $\dist(x_0,\emptyset)=\infty>2R$, 
\[
   \dist(x_0,F_t(\emptyset)) > R \qquad \text{for all $t\ge 0$ with $t\le \frac{R}{h+2Rc}$.}
\]
by Corollary~\ref{finite-speed-corollary-X} (with $r=1$ and $\delta=R$).  Since $R\ge 1$,
\[
   \frac{R}{h+2Rc} = \frac1{(h/R) + 2c} \ge \frac1{h+2c}.
\]
Thus if we let $\tau=1/(h+2c)$,
\[
   \dist(x_0,F_t(\emptyset)) > R \qquad\text{for all $t\le \tau$}.
\]
Since this holds for every $R\ge 1$, we see that $F_t^X(\emptyset)=\emptyset$ for all $t\in [0,\tau]$.
By iteration, $F_t^X(\emptyset)=\emptyset$ for all $t\ge 0$.

We now prove (2).
For $r\ge 0$, let $B_r=\overline{\BB}(x_0,r)=\{x: \dist(x,x_0)\le r\}$.
Suppose $a\ge 1$ and $\dist(p,x_0)\ge 4a$.
We now derive a lower bound on the first time $t$ (if there is one) such that $p\in F_t^X(B_a)$.

Define $R$ by $\dist(p,x_0)=a+3R$.  Thus $R\ge a \ge 1$.

Now
\[
   \BB(p,2R)\subset \BB(x_0, \dist(x_0,p) + 2R)) = \BB(x_0, a+5R) \subset \BB(x_0,6R).
\]
Thus $|X|$ is bounded above by $6Rc$ on $\BB(p,2R)$.

Now $B_a$ is disjoint from $\overline{\BB}(p,2R)$, so by Corollary~\ref{finite-speed-corollary-X},
\[
  \dist(p, F_t^X(B_a)) > R \qquad \text{for $t\le \frac{R}{h+6Rc}$}.
\]
Now
\[
   \frac{R}{h+6Rc} = \frac1{(h/R)+6c} \ge \frac1{h+6c}
\]
since $R\ge 1$.  Thus if $T:=1/(h+6c)$, then
\[
   p \notin \cup_{t\in [0,T]}F_t^X(B_a).
\]
Since this holds for all $p$ with $\dist(p,x_0)\ge 4a$, 
\[
   \cup_{t\in[0,T]}F_t^X(B_a) \subset B_{4a}.
\]
By iteration,
\[
  \cup_{t\in [0,kT]} F_t^X(B_a) \subset B_{4^ka}.
\]
\end{proof}


\begin{theorem}[$X$-flow version of Theorem~\ref{long-time-theorem}]
\label{long-time-theorem-X}
Suppose that $N$ is a complete, smooth Riemannian manifold
with Ricci curvature bounded below,
and that $X$ is a smooth vectorfield with $|\nabla X|$ bounded.
If $Y, Z\subset N\times [0,\infty)$ are weak $X$ flows with $Y(0)$ compact, then
 for every $t<\infty$,
 \[
     \dist(Y(t),Z(t)) \ge e^{\lambda t} \dist(Y(0),Z(0)),
 \]
where $\lambda$ is a lower bound for $\Ricci^X$.
\end{theorem}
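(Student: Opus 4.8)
The plan is to run the proof of Theorem~\ref{long-time-theorem} essentially verbatim, supplying the $X$-flow analogues of its two inputs: that an initially compact weak $X$-flow stays compact, which is Theorem~\ref{compact-theorem-X}, and the distance-comparison Theorem~\ref{distance-theorem}, which (not being one of the three exceptional results) holds with the same statement and proof once $\Ricci$ is replaced by $\Ricci^X$ throughout. The hypothesis that $|\nabla X|$ be bounded is exactly what makes both of these available: it yields, via Theorem~\ref{compact-theorem-X}, that compact $X$-flows stay compact, and it forces $\Ricci^X(v,v)=\Ricci(v,v)+v\cdot\nabla_vX$ to be bounded below on all of $N$, so that the constant $\lambda$ in the statement is an honest global lower bound for $\Ricci^X$.

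Concretely, if $\dist(Y(0),Z(0))=0$ there is nothing to prove, and if $Z(0)=\emptyset$ then $Z(t)=\emptyset$ for all $t$ by Theorem~\ref{compact-theorem-X}, so that $\dist(Y(t),Z(t))=\infty$; hence we may assume $\eta:=\dist(Y(0),Z(0))\in(0,\infty)$. Fix $T<\infty$. By Theorem~\ref{compact-theorem-X} the set $\cup_{t\le T}Y(t)$ is a compact subset of $N$, and consequently
\[
   Y^\eta_\lambda := \cup_{t\in [0,T]} \{ p: \dist(p,Y(t)) \le e^{\lambda t}\eta\}
\]
is closed (here one uses that $Y$ is closed) and is contained in the $e^{|\lambda|T}\eta$-neighbourhood of $\cup_{t\le T}Y(t)$; since $N$ is complete, Hopf--Rinow makes $Y^\eta_\lambda$ compact, and $\lambda$ is a lower bound for $\Ricci^X$ on it. Thus the $X$-flow version of Theorem~\ref{distance-theorem} applies with this $\eta$ and $\lambda$, and since $\dist(Z(0),Y(0))=\eta$ at time $0$ it gives
\[
   \dist(Y(t),Z(t)) \ge e^{\lambda t}\,\dist(Y(0),Z(0)) \qquad (0\le t\le T).
\]
Since $T$ was arbitrary, the inequality holds for all $t<\infty$. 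If one additionally wants $t\mapsto e^{-\lambda t}\dist(Y(t),Z(t))$ to be non-decreasing, apply the same estimate on each interval $[\tau,\tau+s]$, using that time-translates of weak $X$-flows are again weak $X$-flows.

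I do not expect a genuine obstacle; the substantive content sits in Theorems~\ref{compact-theorem-X} and~\ref{distance-theorem}, and what remains is bookkeeping. The two points meriting care are (a) confirming that $\Ricci^X$ is globally bounded below, which is precisely the content of the extra hypothesis on $X$, and (b) the compactness of $Y^\eta_\lambda$, obtained from Theorem~\ref{compact-theorem-X} together with completeness of $N$, since this is exactly the compactness hypothesis needed to invoke the $X$-flow form of Theorem~\ref{distance-theorem}. One should also verify that the proof of Theorem~\ref{distance-theorem}, and of the Key Lemma~\ref{key-lemma} underlying it, really does go through with $\Ricci$ replaced by $\Ricci^X$: the transport term enters the second-variation computation of the evolving distance function exactly as the difference $\int_0^L \gamma'(s)\cdot\nabla_{\gamma'(s)}X\,ds$ between $\Ricci^X$ and $\Ricci$ integrated along $\gamma$, which is why $\Ricci^X$ is the right curvature quantity.
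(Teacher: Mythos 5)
Your proposal is correct and follows essentially the same route as the paper: the paper proves Theorem~\ref{long-time-theorem-X} precisely by invoking Theorem~\ref{compact-theorem-X} for compactness of $\cup_{t\le T}Y(t)$ and then repeating the proof of Theorem~\ref{long-time-theorem}, i.e.\ applying the $X$-flow version of Theorem~\ref{distance-theorem} with $\Ricci^X$ in place of $\Ricci$. Your additional checks---that boundedness of $|\nabla X|$ makes $\Ricci^X$ globally bounded below, and that $Y^\eta_\lambda$ is compact so the hypotheses of Theorem~\ref{distance-theorem} hold for the relevant $\eta$---are exactly the points the paper records only in passing.
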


(Note that $\Ricci^X$ is bounded below because $\Ricci^X$ and $\Ricci$ differ at each point
by at most $|\nabla X|$.)

\begin{proof}
Given Theorem~\ref{compact-theorem-X}, the proof
of Theorem~\ref{long-time-theorem-X} is just like the proof of Theorem~\ref{long-time-theorem}.
\end{proof}


\begin{theorem}\label{smooth-case-theorem}
Suppose that $N$ is a complete Riemannian manifold with Ricci curvature bounded below
and that $X$ is a smooth vectorfield with $|\nabla X|$ bounded. 
Suppose that $M$ is a smooth, closed, embedded hypersurface in $N$,
and that $t\in [0,T]\mapsto M(t)$ is a smooth $X$-mean curvature flow with $M(0)=M$
and $\cup_{t\in [0,T]}M(t)$ compact.
Then
\[
   F_t^X(M)=M(t)
\]
for $t\in [0,T]$, where $F_t^X(\cdot)$ is biggest $X$-flow.

If $M$ bounds a closed region $Q$, then $F_t^X(Q)$ is the corresponding closed region bounded by $M(t)$.
\end{theorem}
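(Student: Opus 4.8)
The plan is to establish the two inclusions $M(t)\subseteq F_t^X(M)$ and $F_t^X(M)\subseteq M(t)$ separately, and similarly for $Q$. The first inclusion (and the inclusion $Q(t)\subseteq F_t^X(Q)$) is immediate from the definition of the biggest flow: a smooth $X$-mean curvature flow of closed embedded hypersurfaces is a weak $X$-flow, and a smooth $X$-flow of closed regions is a weak $X$-flow, so each is contained in the biggest weak $X$-flow with the same initial set. The real content is the reverse inclusion.

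For $F_t^X(M)\subseteq M(t)$, write $Y:=F^X(M)$. Since $N$ is complete with $\Ricci$ bounded below, $|\nabla X|$ is bounded, and $M$ is compact, the $X$-flow version of Theorem~\ref{compact-theorem} (Theorem~\ref{compact-theorem-X}) shows that $\cup_{t\le T}Y(t)$ is compact. I then sandwich $M(t)$ between nearby smooth $X$-flows. Using the normal exponential map of the embedded hypersurface $M$, choose $\delta_0>0$ so that for $0<|s|\le\delta_0$ the signed-distance level set $M^s:=\{\,d(\cdot)=s\,\}$ is a smooth compact embedded hypersurface disjoint from $M$, where $d$ is signed distance to $M$ on a tubular neighbourhood. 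By short-time existence and continuous dependence for smooth $X$-mean curvature flow — using that the given flow $t\mapsto M(t)$ is smooth on the closed interval $[0,T]$ and hence extends slightly past $T$ — after shrinking $\delta_0$ there is, for each such $s$, a smooth compact $X$-mean curvature flow $t\in[0,T]\mapsto M^s(t)$ with $M^s(0)=M^s$, and $M^s(t)\to M(t)$ smoothly, uniformly in $t$, as $s\to0$. Fix a small $\eps>0$. The flows $M^\eps(\cdot)$ and $M^{-\eps}(\cdot)$ are disjoint at time $0$, hence (by avoidance of two compact smooth $X$-flows) disjoint at every time, so together they bound a compact ``slab'' region $t\mapsto S^\eps(t)$ deforming smoothly from $S^\eps(0)=\{\,|d(\cdot)|\le\eps\,\}$.

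Now set $P^\eps(t):=\overline{N\setminus S^\eps(t)}$, the closed region on the far side of the smooth compact $X$-mean curvature flow $\partial S^\eps(t)=M^\eps(t)\sqcup M^{-\eps}(t)$; then $t\mapsto P^\eps(t)$ is itself a weak $X$-flow, and $P^\eps(0)$ is disjoint from $Y(0)=M$ (since $M\subseteq\interior S^\eps(0)$). Because $Y$ is compact over $[0,T]$, the Avoidance Theorem~\ref{avoidance-theorem} (in its $X$-flow form) gives that $Y(t)$ is disjoint from $P^\eps(t)$, i.e. $Y(t)\subseteq\interior S^\eps(t)$, for every $t\in[0,T]$. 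Combining with the easy inclusion $M(t)\subseteq Y(t)$ gives $M(t)\subseteq S^\eps(t)$; since moreover $M^{\pm\eps}(t)\to M(t)$ uniformly as $\eps\downarrow0$, we get $\bigcap_{\eps>0}S^\eps(t)=M(t)$, hence $Y(t)\subseteq M(t)$, completing the proof that $F_t^X(M)=M(t)$. For the region statement, $Q(t)\subseteq F_t^X(Q)$ is the easy direction; for $F_t^X(Q)\subseteq Q(t)$ I would first apply the hypersurface case together with Theorem~\ref{boundary-theorem}: $\partial F^X(Q)$ is a weak $X$-flow with initial set $\partial Q=M$, so $\partial F_t^X(Q)\subseteq F_t^X(M)=M(t)$. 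Since also $F_t^X(Q)\supseteq Q(t)$ and $M(t)$ separates $N$, this forces $F_t^X(Q)$ to equal $Q(t)$ or all of $N$; the latter is excluded when $Q$ (or $\overline{N\setminus Q}$) is compact by rerunning the slab argument with the outer $\eps$-parallel flow of $Q$ and its complementary region, and in the remaining case — where $N$, hence $N\setminus Q$, is noncompact — by finite speed of propagation (Theorem~\ref{finite-speed-theorem-X}, iterated over time steps as in the proof of Theorem~\ref{compact-theorem-X}), which keeps points of $N\setminus Q$ that are sufficiently far from $Q$ out of $F_t^X(Q)$ for $t\le T$.

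The main obstacle is the reverse inclusion, and within it the construction of the sandwich: showing that the parallel hypersurfaces $M^{\pm s}$ flow smoothly and stay compact on the whole interval $[0,T]$ and converge to $M(\cdot)$. This is where short-time existence, continuous dependence, and (if one prefers to start from a merely $C^1$ perturbation) the Local Regularity Theorem~\cite{White_reg} enter, and one must take a little care at the endpoint $t=T$. Once the sandwich and the compactness needed for the Avoidance Theorem are in place, the passage to the limit $\eps\downarrow0$ and the bookkeeping for the region statement are routine.
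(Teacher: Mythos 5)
Your treatment of the hypersurface statement is correct and is essentially the paper's own argument: the paper flows the compact level sets $M_s=\{x:\dist(x,M)=s\}$ (your $M^{s}\cup M^{-s}$), lets $K_s(t)$ be the closed region on the far side of $M_s(t)$ from $M(t)$ (your $P^{s}(t)$), invokes avoidance (Theorem~\ref{long-time-theorem-X}, which already packages the compactness of $F^X(M)$ supplied by Theorem~\ref{compact-theorem-X}), and lets $s\downarrow 0$; like you, it takes for granted the short-time existence and continuous dependence needed to flow the nearby parallel surfaces on all of $[0,T]$.

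The region statement is where you diverge from the paper, and where there is a genuine gap. A first, minor, imprecision: from $Q(t)\subset F_t^X(Q)$ and $\partial F_t^X(Q)\subset M(t)$ one gets that $F_t^X(Q)$ is $Q(t)$ together with a union of connected components of $N\setminus Q(t)$, not merely ``$Q(t)$ or all of $N$'', since $M$ need not be connected and $N\setminus Q(t)$ can have several components. The real problem is your case analysis for excluding swallowed components. In the remaining case, where both $Q$ and $\overline{N\setminus Q}$ are noncompact, you appeal only to finite speed of propagation; that excludes unbounded components of $N\setminus Q(t)$ from being absorbed, but says nothing about bounded ones, and these can occur in exactly that case. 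For example, let $N=\RR\times S^{n-1}$ with a product metric, let $M$ consist of the cross-section $\{0\}\times S^{n-1}$ together with a small geodesic sphere $S'$ located in $\{x>1\}$, and let $Q$ be $[0,\infty)\times S^{n-1}$ with the open ball bounded by $S'$ removed. Then $Q$ and $\overline{N\setminus Q}$ are both noncompact, while $N\setminus Q$ has the bounded component given by the interior of $S'$; finite speed of propagation keeps $F_t^X(Q)$ away from points far out in the left end but does not prevent it from filling that ball.

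The fix is to drop the dichotomy and run your ``outer parallel flow'' argument unconditionally, applying avoidance against the \emph{hypersurface} flow starting from $\partial K_s$, where $K_s=\{x:\dist(x,Q)\ge s\}$: these hypersurfaces are compact (they lie in a tubular neighborhood of $M$), so Theorem~\ref{avoidance-theorem} applies with no compactness assumption on $Q$ or its complement, and every component of $N\setminus Q(t)$ contains a sheet of $\partial K_s(t)$ for small $s$, so no component can be swallowed. This is in essence what the paper does: it shows that $t\mapsto F_t^X(Q)\cap K_s(t)$ is a weak $X$-flow that is empty at time $0$, hence empty for all $t$ (empty flows remain empty, Theorem~\ref{compact-theorem-X}), and then lets $s\downarrow0$; that route also makes your detour through Theorem~\ref{boundary-theorem} and the component bookkeeping unnecessary.
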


\begin{proof}
Trivially, $t\mapsto M(t)$ is a weak $X$-flow, so $M(t)\subset F_t^X(M)$ for all $t\le T$.
Thus it suffices to show that $F_t^X(M)\subset M(t)$.

For $s>0$, let $M_s= \{x\in N: \dist(x,M)=x\}$.
Let $\eps>0$ be sufficiently small that for all $s\in [0,\eps]$, 
there is a smooth $X$-mean-curvature-flow 
\[
   t\in [0,T] \mapsto M_s(t)
\]
with $M_s(0)=M_s$.  Let $s\in (0,\eps]$.
 For each $t\in [0,T]$, let $K_s(t)$ be the union of $M_s(t)$ and
the connected components of $N\setminus M_s(t)$ that do not contained $M(t)$.
Then $t\in [0,T]\mapsto K_s(t)$ is a weak $X$-flow, so
$F_t^X(M)$ and $K_s(t)$ are disjoint for all $t\in [0,T]$ by Theorem~\ref{long-time-theorem-X}.
Since this holds for all $x\in (0,T]$, we see that $F_t^X(M)\subset M(t)$ for all $t\in [0,T]$.
This completes the proof that $F_t^X(M)=M(t)$.

To prove the assertion about $Q$, let $K_s=\{x\in N: \dist(x,Q)\ge s\}$.
Choose $\eps>0$ sufficiently small that there a smooth, compact $X$-mean curvature flow
on the time interval $[0,T]$ with initial surface $\partial K_s$.  Let $t\in [0,T]\mapsto K_s(t)$
be the corresponding flow of regions.

By Theorem~\ref{long-time-theorem-X}, $F_t^X(Q)$ and $\partial K_s(t)$ are disjoint for all $t\in [0,T]$.
It follows that $t\in [0,T]\mapsto F_t^X(Q)\cap K_s$ is a weak $X$-flow that is empty at time $0$.
Thus it is empty for all $t\in [0,T]$ by~Theorem~\ref{compact-theorem-X}.  Since this holds for all $s\in (0,\eps]$,
we see that $F_t^X(Q)\subset Q(t)$ for all $t\in [0,T]$.  The reverse inclusion holds trivially
(since $t\mapsto Q(t)$ is a weak $X$-flow.)
\end{proof}

\section{$X$-mean-convex flows}\label{X-mean-convex-section}

\begin{theorem}\label{X-mean-convex-theorem}
Suppose that $N$ is a complete Riemannian manifold with Ricci curvature bounded below, and that $X$
is a smooth vectorfield with $|\nabla X|$ bounded.
Suppose that $Q$ is a closed region in $N$ bounded by a compact hypersurface $M$,
and suppose that $Q$ is strictly $X$-mean-convex, i.e., that $\overrightarrow{H}+X^\perp$ is nonzero
and points into $Q$ at each point of $M$.

Then there is a continuous time-of-arrival function $u:Q\to [0,\infty]$ such that 
for each $t\in [0,\infty)$,
\begin{align*}
&F_t^X(M) = \{x: u(x)=t\}, \\
&F_t^X(Q) = \{x: u(x)\ge t\}, \\
&\partial F_t^X(Q)=M(t), \\
&\text{$\cup_{\tau\le t}M(\tau)$ is compact}, 
\end{align*}
where $F_t^X(\cdot)$ denotes biggest $X$-flow.
\end{theorem}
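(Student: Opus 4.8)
The plan is to realize $u$ as the \emph{arrival time} of the biggest flow of $Q$,
\[
  u(x):=\sup\{t\ge 0:\ x\in F_t^X(Q)\}\in[0,\infty],
\]
and to deduce the four identities from, in order: (i) monotonicity of $t\mapsto F_t^X(Q)$; (ii) $F_t^X(Q)=\{u\ge t\}$; (iii) \emph{strict} nesting $F_{t_2}^X(Q)\subset\interior(F_{t_1}^X(Q))$ for $t_1<t_2$; and (iv) $\partial F_t^X(Q)=F_t^X(M)$. The compactness of $\cup_{\tau\le t}F_\tau^X(M)$ is immediate from Theorem~\ref{compact-theorem-X}, since $M$ is compact.

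First I would set up the short-time classical flow. As $M$ is smooth, closed, and strictly $X$-mean-convex, short-time existence gives a smooth $X$-mean-curvature flow $t\in[0,T_0]\mapsto M(t)$ with $M(0)=M$, $\cup_t M(t)$ compact, and (after shrinking $T_0$) $M(t)\subset\interior(Q)$ for $t\in(0,T_0]$; by Theorem~\ref{smooth-case-theorem}, $F_t^X(M)=M(t)$ and $F_t^X(Q)$ is the region bounded by $M(t)$, so $F_t^X(Q)\subset\interior(Q)$ for $0<t\le T_0$. Iterating this via the $X$-flow form of the semigroup property (Proposition~\ref{semigroup-proposition}, applied as $F_{t_1+r}^X=F_{t_1}^X\circ F_r^X$) together with monotonicity of $F^X(\cdot)$ in the initial data gives $F_{t_2}^X(Q)\subset F_{t_1}^X(Q)$ for all $t_1\le t_2$, and $F_t^X(Q)\subset\interior(Q)$ for all $t>0$. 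Hence $u\equiv 0$ on $\partial Q$, while a small geodesic ball in $\interior(Q)$ flows nontrivially for a short time, so $u>0$ on $\interior(Q)$ and $\{u=0\}=\partial Q=F_0^X(M)$. Since the biggest flow $\Cc=\cup_t F_t^X(Q)\times\{t\}$ is a closed set, $\bigcap_{s<t}F_s^X(Q)=F_t^X(Q)$, which with monotonicity gives $\{u\ge t\}=F_t^X(Q)$ for every $t\ge0$; in particular $u$ is upper semicontinuous.

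Next I would prove (iii) and (iv) together using a foliation. For small $\eps$, the level sets $M_s=\{d=s\}$ of the signed distance $d$ to $M$ (positive inside $Q$, $|s|\le\eps$) are smooth, compact, strictly $X$-mean-convex, and bound nested regions $Q_s$. Flowing them classically on $[0,T_0]$ (Theorem~\ref{smooth-case-theorem}), the regions $F_t^X(Q_s)$ remain nested, strict $X$-mean-convexity makes the nesting strict, and then continuity and strict monotonicity of $s\mapsto F_t^X(Q_s)$ show that $\{\partial F_t^X(Q_s)\}_{|s|\le\eps}$ is a genuine foliation of the collar $F_t^X(Q_{-\eps})\setminus\interior(F_t^X(Q_\eps))$; taking $s=0$ gives $\partial F_t^X(Q)=F_t^X(M)$ for $t\le T_0$, and shows in addition that a one-sided neighborhood of $\partial F_t^X(Q)$ of width bounded below by a multiple of $t-s$ lies in $F_s^X(Q)$ for $s<t$. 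Using an outer leaf of such a foliation as a barrier, together with the avoidance principle (Theorem~\ref{long-time-theorem-X}), one upgrades this to the \emph{positive-speed estimate}
\[
  F_{t_1+\tau}^X(Q)\subset\{x:\ \dist(x,N\setminus F_{t_1}^X(Q))\ge c\tau\}\qquad(0\le\tau\le\tau_0)
\]
for constants $c,\tau_0>0$ depending only on $Q$; by the semigroup property it suffices to prove this for $t_1$ ranging over a fixed short interval, re-foliating a neighborhood of $\partial F_{t_1}^X(Q)$ at each $t_1$. The estimate is exactly the strict nesting (iii), and propagating it over all $t_1\ge0$ yields (iv) for every $t$. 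Granting (iii): $\bigcup_{s>t}F_s^X(Q)\subset\interior(F_t^X(Q))$, whence $\{u=t\}\supset\partial F_t^X(Q)$; and if $u(x)=t$ with $x\in\interior(F_t^X(Q))$, then a small ball about $x$ flows nontrivially and forces $u(x)>t$, a contradiction, so $\{u=t\}=\partial F_t^X(Q)=F_t^X(M)$. Lower semicontinuity of $u$ likewise follows from (iii) (a sequence $x_j\to x_0$ with $u(x_j)\to\ell<u(x_0)$ would put $x_0$ simultaneously in $\partial F_{t_1}^X(Q)$ and in $F_{t_2}^X(Q)$ for some $t_1<t_2$), so $u$ is continuous and all four identities hold.

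The \textbf{main obstacle} is the positive-speed estimate at singular times: when $\partial F_{t_1}^X(Q)$ fails to be smooth, the naive re-foliation breaks down, and one needs to know that the weak $X$-flow of $Q$ remains $X$-mean-convex in a quantitative sense---for instance that it admits an outer ball of definite radius at each boundary point---so that the outer barriers stay effective. I expect this to be the step that genuinely uses the regularity/structure theory for $X$-mean-convex flows (the Local Regularity Theorem~\cite{White_reg} and noncollapsing-type bounds), beyond the avoidance machinery developed earlier in the paper.
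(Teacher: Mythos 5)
Your reduction of everything to the strict nesting / positive-speed estimate is where the argument breaks, and you have correctly located it yourself: the claim that, by the semigroup property, it suffices to prove $F_{t_1+\tau}^X(Q)\subset\{x:\dist(x,N\setminus F_{t_1}^X(Q))\ge c\tau\}$ for $t_1$ in a fixed short interval is not justified, because ``re-foliating a neighborhood of $\partial F_{t_1}^X(Q)$'' requires $\partial F_{t_1}^X(Q)$ to be a hypersurface with some quantitative regularity (an outer barrier of definite strength), and after the first singular time nothing in this paper provides that. The semigroup property transports sets, not estimates: the constant $c$ you obtain near $t_1=0$ comes from strict $X$-mean-convexity of the smooth initial $M$, and there is no mechanism here to show that a comparable quantitative convexity or outer-ball condition persists for the weak flow. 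Making that step rigorous would indeed require the regularity/noncollapsing theory for ($X$-)mean-convex flows, which is exactly what the theorem can be proved without.

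The paper's proof sidesteps (iii) and any positive-speed estimate entirely, and this is the idea missing from your plan. One defines $u$ not as the arrival time of the $Q$-flow but by $u(x)=t$ iff $x\in F_t^X(M)$ (and $u=\infty$ off $\cup_t F_t^X(M)$): this is well defined because the sets $F_t^X(M)$ are pairwise disjoint, which follows from the short-time smooth maximum principle ($Q(t)\subset Q\setminus M$ for small $t>0$), Theorem~\ref{smooth-case-theorem}, the nesting $F_T^X(Q)\subset F_t^X(Q)$, and avoidance (Theorem~\ref{long-time-theorem-X}); continuity comes from closedness of the spacetime track. The key identity $F_T^X(Q)=\{u\ge T\}$ is then obtained from Theorem~\ref{boundary-theorem}: if $x\in Q\setminus F_T^X(Q)$, the spacetime boundary of $\{(x,t):x\in F_t^X(Q)\}$ is a weak $X$-flow starting from $M$, hence contained in the biggest flow of $M$, so $x\in F_t^X(M)$ for some $t<T$; the converse is the disjointness above. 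Finally $\partial F_t^X(Q)=M(t)$ follows because $\{u=t\}$ has empty interior by Corollary~\ref{shrinking-ball-corollary}. So the whole theorem rests on the boundary theorem plus avoidance, with no strict nesting, no lower speed bound, and no appeal to the local regularity theory; I would encourage you to rework your argument along those lines rather than trying to close the gap you identified.
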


\begin{proof}
Since $M$ is smooth and compact, there is an $\eps>0$ and a smooth $X$-mean curvature flow
\[
    t \in [0,\eps]\mapsto M(t)
\]
with $M(0)=M$. For $t\in [0,\eps]$, let $Q(t)$
be the closed region (corresponding to $Q$) in $N$ bounded by $M(t)$.
Note that
\begin{equation}\label{little-while}
   \text{$Q(t) \subset Q$ and $Q(t)\cap M=\emptyset$ for $0<t\le\eps$}
\end{equation}
by the smooth maximum principle.
By Theorem~\ref{smooth-case-theorem}, 
\begin{equation}\label{same}
\text{$F_t^X(M)=M(t)$ and $F_t^X(Q)=Q(t)$ for all $t\in [0,\eps]$.}
\end{equation}
By~\eqref{little-while} and~\eqref{same}, $F_t^X(Q)\subset Q$ for $t\in [0,\eps]$ and thus (since $F_\tau^X$ preserves
inclusion)
\begin{equation*}
  F_{\tau+t}^X(Q)\subset F_\tau^X(Q)  \quad \text{for all $t\in [0,\eps]$ and $\tau\ge 0$}.
\end{equation*}
By transitivity of inclusion, this implies
\begin{equation}\label{Qs-nested}
  F_{T}^X(Q)\subset F_t^X(Q) \quad\text{for all $T\ge t \ge 0$}.
\end{equation}

By Theorem~\ref{compact-theorem-X}, $\cup_{t\in[0,T]}F_t^X(M)$ is compact for all $T<\infty$,
and by~\eqref{little-while} and by avoidance (e.g., Theorem~\ref{long-time-theorem-X}),
\[  
   F_{\tau+t}^X(Q)\cap F_\tau^X(M) =\emptyset \quad\text{for all $t\in (0,\eps]$ and $\tau\ge 0$}.
\]
Hence by~\eqref{Qs-nested},
\begin{equation}\label{Qs-Ms-disjoint}
   F_\tau^X(Q)\cap F_t^X(M) = \emptyset \quad\text{for all $\tau>t\ge 0$}.
\end{equation}
In particular, 
\begin{equation}\label{Q-M-disjoint}
  F_\tau^X(M)\cap F_t^X(M) = \emptyset \quad\text{for all $\tau>t\ge 0$}
\end{equation}
since $F_t^X(M)\subset F_t^X(Q)$ for all $t$.

Now define $u:Q\to [0,\infty]$ by
\[
u(x) = \begin{cases}  
   t &\text{if $x\in F_t^X(M)$,} \\
   \infty &\text{if $x\in Q \setminus \cup_tF_t^X(M)$}.
   \end{cases}
\]
(This is well-defined since the $F_t^X(M)$ are disjoint.)

Since the $F_t^X(M)$ trace out a closed subset of spacetime, $u:Q\to [0,\infty]$ is continuous.

Now suppose that $x\in Q\setminus F_T^X(Q)$.
Then the spacetime set
\begin{equation}\label{the-set}\tag{*}
   \{(x,t): x\in F_t^X(Q), \, t\ge 0\},
\end{equation}
contains the point $(x,0)$ but not the point $(x,T)$.  Thus there is a $t\in [0,T)$ such that $(x,t)$
lies in the boundary $\mathcal{B}$ (relative to $N\times [0,\infty)$) of the set~\thetag{*}.
By Theorem~\ref{boundary-theorem}, $\mathcal{B}$ is a weak $X$-flow starting from $M$.  
Thus $\mathcal{B}$ lies in the biggest
 such weak $X$-flow, so
\[
    (x,t) \in \mathcal{B} \subset  \{  (y,\tau): y\in F_\tau^X(M), \, \tau\ge 0\},
\]
and therefore $x\in M(t)$.  Hence we have shown 
\[
   x\in Q\setminus F_T^X(Q) \implies u(x) < T.
\]
On the other hand
\[
  u(x)< T \implies x\in M_{u(x)} \implies x\notin F_T^X(Q)
\]
by~\eqref{Qs-Ms-disjoint}.  Thus $F_T^X(Q)=\{u\ge T\}$.

Finally, if $t>0$, then every point in $\{u=t\}$ is a limit of points in $\{u<t\}$ by Corollary~\ref{shrinking-ball-corollary}),
so $\{u=t\}$ has no interior.  Hence $\{u=t\}$ is the boundary of $\{u\ge t\}$.
\end{proof}

\begin{remark} 
Theorem~\ref{X-mean-convex-theorem} 
 remains true (with the same proof) for any closed set $Q$ and for $M=\partial Q$ (not necessarily smooth)
such that
\[
   F_t^X(Q) \subset Q\setminus M  
\]
for all $t$ is some small time interval $(0,\eps]$.  
\end{remark}

\section{Varifold flows}\label{Brakke_sec}

An $m$-dimensional {\bf integral Brakke $X$-flow} in a Riemannian manifold $N$
is a one-parameter family $t\in[0,\infty)\mapsto M(t)$ of Radon measures on $N$
such that for almost every $t$, $M(t)$ is the radon measure associated to
an $m$-dimensional integral varifold in $N$, and such that for every $C^2$, nonnegative, compactly
supported function $\phi$ on $N\times[0,\infty)$,
\newcommand{\barD}{\overline{\operatorname{D}}}
\begin{equation}\label{basic-X-brakke}
\barD_t \int \phi\,dM(t)
\le
\int \left(
   \pdf{\phi}t 
   +
   \nabla \phi^\perp\cdot (X + H)  
   - \phi H\cdot (H + X^\perp) \right) \,dM(t),
\end{equation}
where $\bar{D}_tf(t):=\limsup_{h\to 0}(f(t+h)-f(t))/h$.
As in the case of Brakke flow, the inequality~\eqref{basic-X-brakke} follows from the 
special case when $\phi$ is independent of time; see~\cite{Ilm_elip}*{\S6} or~\cite{Bra}*{3.5}.
Also, as for Brakke flow, the right side of \eqref{basic-X-brakke} should be interpreted as $-\infty$
if any terms in the the expression do not make sense at time $t$; see the discussion
in~\cite{Ilm_elip}*{\S6}.

For integral varifolds, $H=H^\perp$, so we can rewrite~\eqref{basic-X-brakke} as
\begin{equation}\label{basic-X-brakke-2}
\begin{aligned}
\barD_t \int &\phi\,dM(t) \\
&\le
\int \left(
   \pdf{\phi}t 
   +
   \nabla \phi^\perp\cdot X + \nabla \phi\cdot H  
   - \phi H\cdot X  - \phi |H|^2 \right) \,dM(t)
\\
&=
\int \left(
   \pdf{\phi}t 
   +
   \nabla \phi^\perp\cdot X - \Div_M \nabla \phi 
   + \Div_M (\phi X)  - \phi |H|^2 \right) \,dM(t)
\\
&=
\int \left(
   \pdf{\phi}t 
   +
   \nabla \phi^\perp\cdot X - \Div_M \nabla \phi    \right. 
   \\
   &\qquad\qquad \left.\vphantom{\frac12} +\, (\nabla\phi)^{\rm tan}\cdot X + \phi\Div_MX  - \phi |H|^2 \right) \,dM(t)
\\
&=
\int \left(
   \pdf{\phi}t 
   +
   \nabla \phi\cdot X - \Div_M \nabla \phi 
    + \phi\Div_MX  - \phi |H|^2 \right) \,dM(t).
\end{aligned}
\end{equation}

\begin{theorem}\label{Brakke_avoid_thm}
Let $t\in [0,\infty)\mapsto M(t)$ be an $m$-dimensional integral  $X$-Brakke flow
in a smooth $(m+1)$-dimensional Riemannian manifold $N$.
Let $Z\subset N\times[0,\infty)$ be the spacetime support of the flow (i.e., the closure
 in $N\times \RR$ of  $\cup_t(\spt M(t))\times \{t\})$).
Then $Z$ is a weak $X$-flow.
\end{theorem}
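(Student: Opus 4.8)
The plan is to reduce, via the $X$‑version of Theorem~\ref{equivalent-definitions-theorem}, to showing that $Z=\spt M$ avoids strong $X$‑barriers, and then to run a one‑sided parabolic maximum principle against the Brakke inequality \eqref{basic-X-brakke-2} using a test function built from the defining function of the barrier. So I would fix a strong $X$‑barrier $t\in[a,b]\mapsto K(t)=\{f(\cdot,t)\le 0\}$ with $K(a)$ disjoint from $Z(a)$, and first record what strictness of the barrier gives. Since $\Phi_K^X<0$ on the compact set $\partial K$, and since the quantity $\Phi^X$ of the level set $\{f=c\}$ through a point is the continuous expression $|\nabla f|^{-1}\bigl(-f_t+\Div_L\nabla f-X\cdot\nabla f\bigr)$, where $\Div_L\nabla f:=\Delta f-\nabla^2 f(\tfrac{\nabla f}{|\nabla f|},\tfrac{\nabla f}{|\nabla f|})$ is the tangential Laplacian along level sets of $f$, continuity and compactness yield constants $c_0,\eps_1,\delta_0>0$ such that on the collar $\{(x,t):|f(x,t)|\le\delta_0,\ t\in[a,b]\}$ one has $|\nabla f|\ge c_0$ and
\[
   f_t + X\cdot\nabla f - \Div_L\nabla f \ \ge\ \eps_1 .
\]
After shrinking $\delta_0$ I can also arrange, by a standard cutoff of $f$ away from $\cup_t K(t)$ (which changes neither $K$ nor the collar), that $\{f\le\delta_0\}$ is compact, and that $\spt M(a)$ is disjoint from $\{f(\cdot,a)\le\delta_0\}$, so that $\int\phi\,dM(a)=0$ for $\phi$ as below.

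Next I would test the Brakke inequality \eqref{basic-X-brakke-2} against $\phi=\psi(f)$ with $\psi(r)=(\delta_0-r)_+^3$, which is $C^2$, nonnegative, and (by the previous step) compactly supported. Writing $\nabla^M f$ for the tangential part of $\nabla f$ along $M(t)$ and using $\Div_M(\psi'(f)\nabla f)=\psi''(f)|\nabla^M f|^2+\psi'(f)\Div_M\nabla f$, the right-hand side of \eqref{basic-X-brakke-2} becomes
\[
  \int\Bigl(\psi'(f)\bigl(f_t+\nabla f\cdot X-\Div_M\nabla f\bigr)-\psi''(f)|\nabla^M f|^2+\psi(f)\Div_M X-\psi(f)|H|^2\Bigr)\,dM(t).
\]
The key elementary estimate is the tilt bound
\[
   \bigl|\Div_M\nabla f-\Div_L\nabla f\bigr|
   = \bigl|\nabla^2 f(\tfrac{\nabla f}{|\nabla f|},\tfrac{\nabla f}{|\nabla f|})-\nabla^2 f(\nu_M,\nu_M)\bigr|
   \ \le\ C_3\,|\nabla^M f|
\]
on the collar: choosing $\nu_M$ with $\langle\nu_M,\nabla f\rangle\ge0$, one has $|\nu_M-\tfrac{\nabla f}{|\nabla f|}|\le\sqrt2\,|\nabla^M f|/|\nabla f|$, and then one uses $|\nabla f|\ge c_0$ together with the boundedness of $\nabla^2 f$ on the compact collar.

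On $\{-\delta_0<f<\delta_0\}$, where $\psi'(f)\le0$ and the barrier inequality holds, I would then write
$\psi'(f)\bigl(f_t+\nabla f\cdot X-\Div_M\nabla f\bigr)\le -|\psi'(f)|\eps_1+|\psi'(f)|\,C_3|\nabla^M f|$
and absorb the cross term into $-\psi''(f)|\nabla^M f|^2$ by Young's inequality; since $|\psi'(f)|/\psi''(f)=(\delta_0-f)/2<\delta_0$ there, choosing $\delta_0<4\eps_1/C_3^{2}$ makes this whole block $\le0$, so on the collar the integrand is $\le(\sup|\nabla X|)\,\psi(f)$. On $\{f\le-\delta_0\}$ (the deep interior of $K$, where the barrier gives no information) I would instead discard the nonpositive terms $-\psi(f)|H|^2$ and $-\psi''(f)|\nabla^M f|^2$ and use the crude bound $|\psi'(f)|=3(\delta_0-f)^2\le\tfrac{3}{2\delta_0}\psi(f)$ together with the boundedness of $f_t$, $\nabla f\cdot X$, $\Div_M\nabla f$, $\Div_M X$ on the compact set $\{f\le\delta_0\}$; this gives integrand $\le C\psi(f)$. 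On $\{f\ge\delta_0\}$ everything vanishes. Hence $\barD_t\int\phi\,dM(t)\le C\int\phi\,dM(t)$ for $t\in[a,b]$, and since $\int\phi\,dM(a)=0$, Gronwall's inequality (in its standard integrated Brakke form) yields $\int\phi\,dM(t)=0$, hence $M(t)(\{f(\cdot,t)<\delta_0\})=0$, for every $t\in[a,b]$. Thus $\spt M(t)$ is disjoint from the open set $\{f(\cdot,t)<\delta_0\}\supset K(t)$, with a uniform distance gap; taking closures in spacetime gives $Z(t)\cap K(t)=\emptyset$ for all $t\in[a,b]$, and Theorem~\ref{equivalent-definitions-theorem} then gives that $Z$ is a weak $X$‑flow.

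The step I expect to be the main obstacle is the deep interior $\{f\le-\delta_0\}$, about which the barrier hypothesis says nothing: the resolution is that the cubic weight $\psi$ is large precisely there, so the crude bound suffices — but one must choose $\psi$'s growth and the exponent $3$ (the smallest making $\phi\in C^2$ while keeping $|\psi'|/\psi$ and $|\psi'|/\psi''$ under control) with some care. A secondary technical point is the usual one of making the Brakke differential inequality rigorous when the mass is only a priori upper semicontinuous and the $|H|^2$ term may fail to be finite at some times; this is handled exactly as in Brakke~\cite{Bra}*{\S3} and Ilmanen~\cite{Ilm_elip}*{\S6}.
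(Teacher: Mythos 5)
Your proof is correct and follows essentially the same route as the paper: reduce via Theorem~\ref{equivalent-definitions-theorem} to avoidance of strong barriers, then test the Brakke inequality \eqref{basic-X-brakke-2} with a cubic cutoff adapted to the barrier, absorb the tilt term, and conclude that the weighted mass vanishes for all time. The only differences are in execution: the paper builds the test function from the signed distance to $\partial K(t)$ (so the tilt error is quadratic in $1-|\nn\cdot\nabla r|^2$ and the mass is directly nonincreasing), whereas you use the defining function $f$, absorb a tilt error that is linear in $|\nabla^M f|$ via Young's inequality against $-\psi''(f)|\nabla^M f|^2$, handle the deep interior $\{f\le-\delta_0\}$ by the crude bound $|\psi'|\le \tfrac{3}{2\delta_0}\psi$, and finish with Gronwall --- a legitimate variant that in fact gives a globally $C^2$ test function and makes explicit the interior region that the paper's signed-distance argument treats only implicitly.
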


Theorem \ref{Brakke_avoid_thm} was proved in \cite{Ilm_elip}*{10.5} for Brakke flows in $\RR^{n+1}$ (with $X=0$).

\begin{proof}[Proof of Theorem~\ref{Brakke_avoid_thm}]
Let 
\[
   t\in [a,b]\subset [0,\infty)\mapsto K(t)
\]
be a strong barrier (as in~\S\ref{equivalent-definitions-section})
 such that $K(t)$ is disjoint from $Z(t)$ for $t\in [a,b)$.
By Theorem \ref{equivalent-definitions-theorem}, it suffices to show that $Z(b)$ is disjoint from $K(b)$.

Let $r(\cdot,t)$ be the signed distance to $\partial K(t)$ such that $r$ is positive in the complement of $K(t)$.
Then for $x\in \partial K(t)$, 
\begin{equation*}
0
>\Phi_K 
= \vv_K - H^X_K 
= -\pdf{r}t + \Delta r - X\cdot \nabla r
\end{equation*}
by~\eqref{f-expressions} with $r$ in place of $f$.
Consequently, we can choose $\delta>0$ sufficiently small and $k>0$ so that
wherever $|r|\le \delta$, 
the function $r$ is smooth and  
\begin{equation}\label{heat-inequality}  
\pdf{r}t - \Delta r + X\cdot \nabla r \ge k. 
\end{equation}
We also choose $\delta$ to be less that $\dist(Z(a),K(a))$.

By~\eqref{basic-X-brakke-2},
\begin{equation}\label{basic-without-Hsquared}
\barD_t\int \phi\,dM(t)
\le
\int \left(
   \pdf{\phi}t 
   +
   \nabla \phi\cdot X - \Div_M \nabla \phi 
    + C\phi \right) \,dM(t),
\end{equation}
where $C$ is $m$ times the maximum of $|\nabla X|$ on a compact set containing the support of $\phi$.

Now let $\phi = ((\delta - r)^+)^3$.  Note that this function is $C^2$ on the points of $N\times[a,b]$ in the support of the flow.
Letting $s=(\delta-r)^+$, we have
\begin{align*}
 \pdf{\phi}t &= - 3s^2 \pdf{r}t, \\
 \nabla\phi &= - 3s^2 \nabla r \\
 \nabla^2\phi &= - 3s^2 \nabla^2r + 6s\nabla r\otimes\nabla r, \\
 \Div_M(\nabla \phi) 
 &= -3s^2\Div_M\nabla r + 6s|(\nabla r)^{\rm tan}|^2 \\
 &= -3s^2(\Delta r - \nabla^2r(\nn,\nn)) + 6s(1 - |\nn\cdot\nabla r|^2),
\end{align*}
 where $\nn(x,t)$ is a unit normal to the approximate tangent plane to $M(t)$ at $x$.
Thus by~\eqref{basic-without-Hsquared} and~\eqref{heat-inequality},
\begin{align*}
\barD_t\int\phi\,dM(t)
&\le
\int \left( -3s^2\left( \pdf{r}t - \Delta r + \nabla r\cdot X  + \nabla^2r(\nn,\nn)  \right) \right.
\\
&\qquad\qquad
   - \left. \vphantom{\frac12} 6s(1 - |\nn\cdot\nabla r|^2)  + C s^3 \right) \,dM(t)
   \\
&\le
\int \left( -3s^2 k  +  3s^2| \nabla^2r(\nn,\nn)|   -  6s(1 - |\nn\cdot\nabla r|^2)  + C\delta s^2  \right) \,dM(t)
\\
&\le
\int \left(  3s^2|\nabla^2r(\nn,\nn)| - 6s(1 - |\nn\cdot\nabla r|^2) \right) \, dM(t)
\end{align*}
provided we choose $\delta< 3k/C$.  Now $\nabla^2r(\cdot,\cdot)$ is a quadratic form that vanishes
on $\nabla r$, so
\[
    |\nabla^2r(\nn,\nn)| \le c(1 - (\nn\cdot\nabla r)^2)
\]
for some constant $c$.  Thus
\begin{align*}
D_t\int \phi\,dM(t)
&\le
\int   (3s^2 c - 6s)(1 - |\nn\cdot\nabla r|^2)  \, dM(t)
\\
&\le
\int 3s   (\delta c - 2)(1 - |\nn\cdot\nabla r|^2)  \, dM(t),
\end{align*}
which is $\le 0$ provided we chose $\delta< 2/c$.

Since $\int \phi\,dM(t)$ is nonnegative, zero at the initial time $a$, and decreasing, 
it is zero for all $t\in [a,b]$.
Thus $\dist(Z(t),K(t))\ge \delta$ for all $t\in [a,b]$.
\end{proof}

\appendix
\renewcommand{\thetheorem}{\thesection\arabic{theorem}}
\setcounter{theorem}{0}

\section{}

\begin{theorem}\label{app_thm}
Suppose that $X$ and $Y$ are closed subsets of $N$ such that
\[
  r:=\frac12\dist(X,Y) > 0
\]
and such that
\[
   \{p: \dist(p,X)= r\}
\]
is compact.
Then there is a compact, $C^1$ embedded hypersurface surface $M$ separating $X$ and $Y$ such
that
\[
   \dist(X,M)=\dist(Y,M)=r.
\]
\end{theorem}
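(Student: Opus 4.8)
The idea is to replace the two metric conditions $\dist(X,M)=r$ and $\dist(Y,M)=r$ by a single geometric containment, and then to obtain $M$ by smoothing the (generally non‑smooth) topological separator $\{\dist(\cdot,X)=r\}$.

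Write $d=\dist(\cdot,X)$ and $e=\dist(\cdot,Y)$; these are $1$‑Lipschitz, $d$ vanishes on $X$, $e$ vanishes on $Y$, and $d+e\ge\dist(X,Y)=2r$ everywhere. Hence $d<r$ forces $e>r$ and conversely, so $\{d<r\}\supseteq X$ and $\{e<r\}\supseteq Y$ are disjoint open sets, $K:=\{d=r\}$ is the given compact set, and $\Omega:=\{d\ge r\}\cap\{e\ge r\}$ is exactly the complement of $\{d<r\}\cup\{e<r\}$ and contains $K$. The reduction is this: \emph{it suffices to produce a compact $C^1$ embedded hypersurface $M\subseteq\Omega$ that separates $X$ from $Y$.} Indeed, $M\subseteq\Omega$ gives $\dist(X,M)=\inf_M d\ge r$ and likewise $\dist(Y,M)\ge r$; while, for each $n$, a path from a point of $X$ to a point of $Y$ of length less than $2r+\frac{1}{n}$ must meet $M$ (since $M$ separates), and cutting it at a crossing point gives $\dist(X,M)+\dist(Y,M)<2r+\frac{1}{n}$; letting $n\to\infty$ and combining, $\dist(X,M)=\dist(Y,M)=r$.

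To construct such an $M$, I would modify $K$ locally inside a precompact open neighborhood $W$ of $K$, producing a compact $C^1$ embedded hypersurface $M\subseteq W$ that still separates $X$ from $Y$. On the open subset of $W$ where the nearest point of $X$ is unique, $d$ is smooth and $\{d=r\}$ is already a smooth embedded hypersurface lying in $\Omega$ (since $d=r$ implies $e\ge r$), so no modification is needed there. On the complementary, relatively compact part of $W$, $d$ is still semiconcave, so a standard mollification $d_\delta$ satisfies $d_\delta\le d+C\delta^2$; then $d_\delta-C\delta^2$ is smooth and lies below $d$, so its $r$‑level set is contained in $\{d\ge r\}$. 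One splices these local descriptions together with a partition of unity subordinate to a suitable cover of $W$, and checks that the result separates $X$ from $Y$.

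The main obstacle is to carry out this splicing so that $M$ lands \emph{exactly} in $\Omega$, and not merely in a slightly larger set $\{d\ge r-\delta\}\cap\{e\ge r-\delta\}$: a naive mollification (of $d$, or of a function such as $\max(d-r,\,r-e)$ whose zero set is $K$) overshoots \emph{both} walls $\{d=r\}$ and $\{e=r\}$, and the reduction above cannot absorb any error in the distances. Both walls are active simultaneously only along the compact ``middle set'' $\{d=e=r\}$ of midpoints of shortest $X$‑to‑$Y$ geodesics, where $d$ and $e$ may both fail to be differentiable. Away from the middle set only one wall is active — if $e>r$ near a point then $\{e\ge r\}$ is a full neighborhood of it, and symmetrically — so the one‑sided mollification of the relevant one of $d$, $e$ keeps the modified level set on the correct side; near the middle set one argues directly, using the semiconcavity of $d$ and $e$ and the fact that a shortest geodesic crosses $K$ there. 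Once a compact $C^1$ embedded hypersurface $M\subseteq\Omega$ separating $X$ from $Y$ is in hand, the reduction gives $\dist(X,M)=\dist(Y,M)=r$, completing the proof.
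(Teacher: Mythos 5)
Your opening reduction is correct and is a nice way to make explicit what the paper leaves to the reader: if $M$ is compact, separates $X$ from $Y$, and lies in $\Omega=\{d\ge r\}\cap\{e\ge r\}$, then $\dist(X,M)+\dist(Y,M)\le 2r$ from near-shortest paths and $\ge 2r$ from the containment, so both distances equal $r$. The gap is that the construction of such an $M$ --- which is the entire content of the theorem --- is not actually carried out. Concretely: (i) mollifying a distance function does not give a hypersurface where you need one. Near points of $K=\{d=r\}$ with several foot points (and a fortiori near the pinch set $\{d=e=r\}$), $\nabla d_\delta$ is an average of nearly cancelling unit vectors and may vanish on the level set, so ``$\{d_\delta-C\delta^2=r\}$ is a $C^1$ embedded hypersurface'' is unjustified; Sard's theorem lets you move the level value, but then the exact containment in $\Omega$ on the $e$-side is lost, since $d\ge r+\eps$ does not imply $e\ge r$. (Also, uniqueness of the nearest point does not give smoothness of $d$, only differentiability, though semiconcavity does upgrade this to $C^1$ on open sets of differentiability; this part is repairable.) (ii) The partition-of-unity splicing is asserted, not proved: the zero set of $\varphi f_1+(1-\varphi)f_2$ is not obviously contained in $\Omega$, nor is $0$ obviously a regular value, when $f_1,f_2$ are the two one-sided modifications. (iii) Most importantly, any admissible $M$ is forced to pass through every midpoint of a length-$2r$ geodesic from $X$ to $Y$; at such points $d=e=r$, both walls are active, $\Omega$ has empty interior, and $d,e$ are in general non-differentiable. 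Your treatment of this case is the sentence ``one argues directly, using the semiconcavity of $d$ and $e$\dots'', which is a placeholder for precisely the hardest step --- proving $C^1$ regularity of the separator at the pinch points --- and no argument is given.

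For comparison, the paper does not mollify distance functions at all. It forms the region $U$ between $A=\{\dist(\cdot,X)\le r\}$ and a $\delta$-neighborhood $B$ of $Y'=\{\dist(\cdot,X)\ge r+\delta\}$, solves the Dirichlet problem for a harmonic $h$ on $U$ with boundary values $-1$ on $(\partial A)\setminus B$ and $+1$ on $(\partial B)\setminus A$, takes a regular level set $h^{-1}(c)$ via Sard, and adjoins the compact pinch set $Z=A\cap B$; the crux is then a blow-up argument at points of $Z$: rescaling by the radius of the largest inscribed ball, the domains converge to a slab, the rescaled $h$ converge to the linear function $x\cdot\vv(p)$, and hence the unit normals $\nabla h/|\nabla h|$ converge to the direction $\vv(p)$ of the unique minimizing geodesic through the pinch point, giving $C^1$ regularity of $M=h^{-1}(c)\cup Z$. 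You would need an analogous quantitative argument at the pinch set (or some other device playing its role); as it stands, your proposal is a plan whose decisive step is missing.
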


\begin{proof}
Note that there is a  $\delta\in (0,r)$ such that 
\begin{equation}\label{between-region}
  \{p: r-\delta\le \dist(p,X) \le r+\delta\}
\end{equation}
is compact.   By replacing $\delta$ be a smaller $\delta$, we can assume
that geodesic balls with centers in~\eqref{between-region} and with radii $\le \rho$
have smooth boundaries.

Let
\begin{align*}
   X' &= \{p: \dist(p,X)\le r-\delta\}, \\
   Y' &= \{p: \dist(p,X)\ge r+\delta\},
\end{align*}
and let
\begin{align*}
A&= \{p: \dist(p,X)\le r\} = \{p:\dist(p,X')\le \delta\}, \\
B&= \{p: \dist(p,Y') \le \delta\}, \\
Z&= A\cap B, \\
U&= N\setminus(A\cup B).
\end{align*}
Note that $\overline{U}$ is compact.

Consider a point $z\in Z$.  Let $C^z_1$ and $C^z_2$ be shortest geodesics joining $z$ to $X'$ and to $Y'$.
Then $C^z_1\cup C^z_2$ is a shortest curve  joining $X'$ to $Y'$, and thus is a geodesic.
Consequently, $C^z_1$ and $C^z_2$ are unique and depend continuously on $z\in Z$.
Therefore 
\[
   z\in Z\mapsto \vv(z)
\]
 is continuous, where $\vv(z)$ is the unit tangent vector to $C^z_1\cup C^z_2$
at $z$ that points out of $C^z_1$ and into $C^z_2$.

Let $h: U \to \RR$ be the function that minimizes $\int |Dh|^2$ subject to
\begin{align*}
h&=-1 \quad \text{on $(\partial A)\setminus B$ and} \\
h&=1 \quad \text{on $(\partial B)\setminus A$}.
\end{align*}
Then $h$ is harmonic (and therefore smooth) on $U$
and continuous
 on $\overline{U} \setminus Z$.
 
(The continuity holds because if $p\in \partial{U}$ and if $q$ is a point in $X'\cup Y'$
closest to $p$, then $\BB(q,\delta)\subset U^c$ and $p\in \partial \BB(q,\delta)$.)

Let $c\in (-1,1)$ be a regular value of $h$, and let
\[
  M = h^{-1}(c)\cup Z.
\]
To prove that $M$ is $C^1$, it suffices to show that if $p_i\in M\cap U$ converges to $p\in Z$,
then
\[
  \frac{\nabla h(p_i)}{|\nabla h(p_i)|} \to \vv(p).
\]
Let $\BB(q_i,r_i)$ be the largest ball in $U$ that contains $p_i$.
We work in normal coordinates at the point $p$.
Let
\[
   U_i = (U - q_i)/r_i
\]
and
\begin{align*}
  &h_i: U_i \to \RR, \\
  &h_i(x) = h(r_i(q_i+x)).
\end{align*}
Note that $U_i$ converges to the slab
\[
  \{ x\in \RR^{n+1}:  0 < x\cdot\vv(p) < 1\}.
\]
Therefore $h_i$ converges smoothly
to the harmonic function
\[
   x\cdot\vv(p)
\]
and $p_i$ converges (perhaps after passing to a subsequence)
to a point $p'$ such that $p'\cdot \vv(p)=c$.
The result follows immediately. 
\end{proof}

\bibliography{HershkovitsWhite}
\bibliographystyle{alpha}

\end{document}